\tikzstyle{vx}=[shape=circle, minimum size=2mm, inner sep=0, fill=black]
\tikzstyle{whitevx}=[shape=circle, minimum size=2mm, draw=black, thick, fill=white, inner sep=0]
\tikzstyle{grayvx}=[shape=circle, minimum size=2mm, draw=black, thick, dash pattern = on 2pt off 1 pt, fill=gray!90, inner sep=0]
\tikzstyle{grayedge}=[line width=2.5pt,dash pattern=on 1pt off 1pt]
\tikzstyle{sgrayedge}=[line width=1.875pt,dash pattern=on 1pt off 1pt]
\tikzstyle{emphedge}=[ultra thick, dotted]
\theoremstyle{plain}
\newtheorem{thm}{Theorem}[section]
\newtheorem{lem}[thm]{Lemma}
\newtheorem{lemma}[thm]{Lemma}
\newtheorem{prop}[thm]{Proposition}
\newtheorem{cor}{Corollary}[thm]
\theoremstyle{definition}
\newtheorem{defn}[thm]{Definition}
\newtheorem{obs}[thm]{Observation}
\newtheorem{ques}[thm]{Question}
\newtheorem{cons}[thm]{Construction}
\theoremstyle{remark}
\newtheorem{case}{Case}[thm]
\newcommand{\indsat}[2]{\ensuremath{\operatorname{indsat}(#1,#2)}\xspace}
\newcommand{\sis}[2]{\ensuremath{{\operatorname{indsat}}^*(#1,#2)}\xspace}
\newcommand{\sat}[2]{\ensuremath{\operatorname{sat}(#1,#2)}\xspace}
\newcommand{\ex}[2]{\ensuremath{\operatorname{ex}(#1,#2)}\xspace}
\newcommand{\claw}{\ensuremath{K_{1,3}}\xspace}
\newcommand{\paw}{\ensuremath{K_{1,3}^+}\xspace}
\renewcommand{\bar}{\overline}
\newcommand{\cl}[1]{\lceil#1\rceil}
\newcommand{\cartprod}{\mathop{\scriptsize\Box}}
\definecolor{wildstrawberry}{rgb}{1.0, 0.26, 0.64}
\begin{document}

\title{Graphs with induced-saturation number zero}

\author[S. Behrens]{Sarah Behrens}
\address[S. Behrens]{University of Nebraska-Lincoln}
\email{s-sbehren7@math.unl.edu}
\urladdr{http://www.math.unl.edu/~s-sbehren7}
\thanks{The research of the first author is supported in part by National Science Foundation grant DMS-0914815.}

\author[C. Erbes]{Catherine Erbes}
\address[C. Erbes]{Hiram College}
\email{erbescc@hiram.edu}
\thanks{The research of the second author is supported in part by National Science Foundation Grant DGE-0742434, UCD GK12 Transforming Experiences Project.}

\author[M. Santana]{Michael Santana}
\address[M. Santana, D. Yager, and E. Yeager]{University of Illinois at Urbana-Champaign}
\email[M. Santana]{santana@illinois.edu}
\urladdr[M. Santana]{http://math.uiuc.edu/~santana}
\thanks{The research of the third, fourth, and fifth authors is supported in part by National Science Foundation grant DMS 08-38434 ``EMSW21-MCTP: Research Experience for Graduate Students.''}

\author[D. Yager]{Derrek Yager}
\email[D. Yager]{yager2@illinois.edu}

\author[E. Yeager]{Elyse Yeager}
\email[E. Yeager]{yeager2@illinois.edu}
\urladdr[E. Yeager]{http://math.uiuc.edu/~yeager2}

\begin{abstract}
Given graphs $G$ and $H$, $G$ is $H$-saturated if $H$ is not a subgraph of $G$, but for all $e \notin E(G)$, $H$ appears as a subgraph of $G + e$.  While for every $n \ge |V(H)|$, there exists an $n$-vertex graph that is $H$-saturated, the same does not hold for induced subgraphs.  That is, there exist graphs $H$ and values of $n \ge |V(H)|$, for which every $n$-vertex graph $G$ either contains $H$ as an induced subgraph, or there exists $e \notin E(G)$ such that $G + e$ does not contain $H$ as an induced subgraph.  To circumvent this Martin and Smith \cite{MS} make use of trigraphs when introducing the concept of induced saturation and the induced saturation number of graphs. This allows for edges that can be included or excluded when searching for an induced copy of H, and the induced saturation number is the minimum number of such edges that are required.

In this paper, we show that the induced saturation number of many common graphs is zero.  Consequently, this yields graphs, instead of trigraphs, that are H-induced-saturated. We introduce a new parameter for such graphs, $\sis{n}{H}$, which is the minimum number of edges in an H-induced-saturated graph.  We provide bounds on $\sis{n}{H}$ for many graphs. In particular, we determine $\sis{n}{\paw}$ completely, and \iftoggle{sub}{$\sis{n}{\claw}$ within an additive constant of four.}{$\sis{n}{\claw}$ for infinitely many n.}
\end{abstract}

\maketitle

\iftoggle{sub}{}{
\tableofcontents
\listoftables
\listoffigures
\newpage}

\section{Background and Introduction}\label{sec:intro}

\subsection{Background and Definitions}

A well-known graph parameter is the \emph{saturation number}, defined for a graph $H$ and a whole number $n$, as the minimum number of edges in a graph $G$ on $n$ vertices such that $H$ is not a subgraph of $G$, but $H$ occurs if any edge is added to $G$. Formally,
\[\sat{n}{H}=\min\{|E(G)|:\text{G has~}n \text{~vertices}, H\not\subseteq G,\text{~and~}\forall e\notin E(G), H\subseteq G+e\}.\]

Determining the saturation number for a given graph $H$ has proven, in general, quite difficult. For more information on the saturation number, see the dynamic survey of Faudree, Faudree, and Schmitt \cite{FFS}.

A natural attempt at defining an induced variant of graph saturation would be to state that an $n$-vertex graph $G$ is $H$-induced-satured if $G$ is $H$-free and for all $e \notin E(G)$, $G + e$ contains $H$ as an induced subgraph.  Unfortunately, this is not always well-defined.  That is, there exist graphs $H$ and values of $n \ge |V(H)|$ for which every $n$-vertex graph $G$ either contains $H$ as induced subgraph, or there exists $e \notin E(G)$ such that $G+e$ is $H$-free.  A simple example is $n = 4$ and $H = \claw$.  

In this paper, we consider a variant of the saturation number introduced by Martin and Smith in 2012 that looks for induced copies of $H$, and considers deleting as well as adding edges. To create a well-defined parameter, Martin and Smith \cite{MS} make use of \emph{trigraphs}, objects also used by Chudnovsky and Seymour in their structure theorems on claw-free graphs \cite{CS}.

\begin{defn}
A  \textbf{trigraph} $T$ is a quadruple $(V(T), E_B(T), E_W(T), E_G(T))$, where $V(T)$ is the vertex set and the other three elements partition ${V(T)\choose 2}$ into a set $E_B(T)$ of black edges, a set $E_W(T)$ of white edges, and a set $E_G(T)$ of gray edges.  
These can be thought of as edges, nonedges, and potential edges, respectively. For any $e\in E_B(T)\cup E_W(T),$ let $T_e$ denote the trigraph where $e$ is changed to a gray edge, i.e. $T'=(V(T),E_B(T)-e,E_W(T)-e,E_G(T)+e)$.

A  \textbf{realization} of $T$ is a graph $G=(V(G),E(G))$ with $V(G)=V(T)$ and $E(G)=E_B(T)\cup S$ for some $S\subseteq E_G(T)$.  Let $\mathcal{R}(T)$ be the family of graphs that are a realization of $T$.

A trigraph $T$ is \textbf{$H$-induced-saturated} if no realization of $T$ contains $H$ as an induced subgraph, but $H$ occurs as an induced subgraph of some realization whenever any black or white edge of $T$ is changed to gray. Formally,

\begin{align*}
\indsat{n}{H}=\min\{|E_G(T)|:&|V(T)|=n, \forall G\in \mathcal{R}(T), H\not\leq G,\\
&\text{~and~}\forall e\in E_B(T)\cup E_W(T), H\leq G' \\
&\text{where~} G'\in\mathcal{R}(T_e)\}\\
\end{align*}


The \textbf{induced saturation number} of a graph 
$H$ with respect to $n$, written $\indsat{n}{H}$, is the minimum number of gray edges in an $H$-induced-saturated trigraph with $n$ vertices.

Notice that a trigraph with $E_G(T)=\emptyset$ has a unique realization, so if $\indsat{n}{H}=0$, there is a graph $G$ that has no induced copy of $H$ yet adding or removing any edge creates an induced copy of $H$. We will call such a graph \textbf{$H$-induced-saturated}.

The {\bf complement} of a trigraph $T$, denoted $\overline{T}$, is the trigraph with $V(\bar T)=V(T)$, $E_B(\overline{T}) = E_W(T)$, $E_W(\overline{T}) = E_B(T)$, and $E_G(\bar T)=E_G(T)$.
\end{defn}

\subsection{Notation} For graphs $G$ and $H$, we let $G \cup H$ denote the disjoint union, $G \vee H$ denote the join, and $G \cartprod H$ denote the Cartesian product of the two graphs.  
A trivial component of a graph is an isolated vertex.
For a graph $G$, we use $n(G)$ for the number of vertices and $e(G)$ for the number of edges in $G$. 
We let $P_n$ denote the path on $n$ vertices and $C_n$ the cycle on $n$ vertices. $K_n$ is the complete graph on $n$ vertices, and for $k \geq 2$, $K_{a_1,\ldots,a_k}$ is the complete multipartite graph with parts of size $a_1,\ldots,a_k$.  $\paw$ is the paw, which is obtained by adding a single edge to $K_{1,3}$.  For a set $S \subseteq V(G)$, $G[S]$ is the subgraph of $G$ induced by $S$, and if $S=\{v_1, \ldots, v_p\}$, we will sometimes write $G[v_1, \ldots, v_p]$. 
For a vertex $v\in V(G)$, $N_G(v)$ (or $N(v)$, if $G$ is clear from context) is the set of neighbors of $v$ in $G$, and $N[v]=N(v) \cup \{v\}$. We use $\deg_G(v)$ or $\deg(v)$ to denote the degree of $v$, that is, $|N(v)|$. 
In a trigraph, the black (resp. gray) degree of a vertex is the number of black (resp. gray) edges incident to that vertex. 
We say a set $S$ of vertices \emph{dominates} $G$ if every vertex of $G-S$ is adjacent to some vertex in $S$, and we call $S$ a {\em dominating set}; if $S=\{v\}$, we say $v$ is a \emph{dominating vertex}.
We say a vertex $u$ \emph{dominates} $S$ if $u$ is adjacent to every vertex in $S$.  
Finally, for an integer $n$, we let $[n]=\{1,\ldots, n\}$.
Other notation will be defined as it is used, or see \cite{West} for any undefined terms. 

\subsection{Observations and Previous Results}

By definition, the only trigraphs on fewer than $n(H)$ vertices that are $H$-induced-saturated are those in which all edges are gray. Thus we will usually assume that $n \ge v(H)$ when we compute $\indsat{n}{H}$. 

The following theorem summarizes the results of Martin and Smith~\cite{MS}:
\begin{thm}\label{thm:sat}
Let $H$ be a graph.
\begin{itemize}
\item For all $n\geq v(H)$, $\indsat{n}{H}\leq\sat{n}{H}$. By~\cite{KT}, $\sat{n}{H} \in O(n)$, so in particular $\indsat{n}{H}\in O(n)$.
\item  For all $n\geq m\geq3$, $\indsat{n}{K_m}=\sat{n}{K_m}$. (Note that $\sat{n}{K_m}$ was determined by Erd\H{o}s, Hajnal, and Moon in \cite{EHM}.)
\item For all $n\geq m\ge 2$, and for $e \in E(K_m)$, $\indsat{n}{K_m-e}=0$. In particular, for all $n\geq3$, $\indsat{n}{P_3}=0$.
\item For all $n\geq4$, $\indsat{n}{P_4}=\left\lceil\frac{n+1}{3}\right\rceil$.
\end{itemize}
\end{thm}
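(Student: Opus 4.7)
My plan is to prove $\indsat{n}{P_4} = \lceil(n+1)/3\rceil$ for $n \geq 4$ by establishing matching upper and lower bounds: the former via an explicit construction and the latter via a structural-plus-counting argument.

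For the upper bound, I would exhibit, for each $n \geq 4$, a specific $P_4$-induced-saturated trigraph $T_n$ with exactly $\lceil(n+1)/3\rceil$ gray edges. Since every realization of such a trigraph must be a cograph (equivalently, $P_4$-free), $T_n$ must have all of its realizations in that class. A natural guide for the base case $n = 4$ is the trigraph with $C_4$ as its set of black edges and the two diagonals as gray edges: up to isomorphism, the three realizations $C_4$, $K_{1,1,2}$, and $K_4$ are all cographs, and replacing any $C_4$-edge by gray produces the realization $C_4 - e \cong P_4$. For $n > 4$, I would build $T_n$ from a sequence of $\lceil(n+1)/3\rceil$ small gadgets on about three vertices each, each contributing one gray edge, joined by carefully chosen black and white edges so that the cograph structure is preserved in every realization, with three cases according to $n \bmod 3$. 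Verification has two parts: (i) every realization of $T_n$ is a cograph, likely by exhibiting a recursive join-and-union decomposition, and (ii) for every black or white edge $e$, some realization of $(T_n)_e$ contains an induced $P_4$, typically witnessed by a $P_4$ that uses $e$ together with one or two vertices nearby.

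For the lower bound, let $T$ be any $P_4$-induced-saturated trigraph on $n$ vertices. Every realization of $T$ is a cograph, so one can invoke the cograph decomposition theorem — every cograph on at least two vertices is either disconnected or has a disconnected complement — when analyzing the structure. The saturation condition provides strong local information at each non-gray edge: for each black edge $uv$, some realization $G \in \mathcal{R}(T)$ must admit vertices $x, y$ such that $x - u - v - y$ is an induced $P_4$ in $G - uv$, with the analogous requirement (an induced $P_4$ in $G + uv$) for each white edge. I would analyze these witnesses and set up a counting or discharging argument that assigns each vertex to a ``responsible'' gray edge; showing that each gray edge is responsible for at most three vertices yields the desired bound of $\lceil(n+1)/3\rceil$.

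The main obstacle is the lower bound. The upper bound is, at heart, the task of finding a clever construction and verifying it by a finite case check. The lower bound is harder because the non-gray part of a $P_4$-induced-saturated trigraph can itself be any cograph, and cographs can be arbitrarily large and richly structured; the argument must use the local toggling condition globally. Identifying the right combinatorial invariant — tied perhaps to connected components in realizations, to twin classes in the non-gray parts, or to configurations of nearby gray edges — is the creative step. A plausible tactic is to assign, for each non-gray edge, a small certificate consisting of the witness $P_4$ it produces, and then to apply an averaging argument over these certificates to show that gray edges cannot be too sparse.
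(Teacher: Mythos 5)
This theorem is not proved in the paper at all: it is explicitly presented as a summary of the results of Martin and Smith \cite{MS}, so there is no in-paper argument to compare your proposal against. That said, your proposal does not close the gap on its own. First, it addresses only the fourth bullet ($\indsat{n}{P_4}$) and says nothing about the other three claims (the inequality $\indsat{n}{H}\leq\sat{n}{H}$, the equality for $K_m$, and the value $0$ for $K_m-e$), the first and third of which are actually the easy parts and would be worth writing out if you intend to prove the whole statement.

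Second, even for the $P_4$ bullet, what you have is a plan rather than a proof, and the two load-bearing steps are both missing. For the upper bound, the construction for $n>4$ is described only as gadgets ``joined by carefully chosen black and white edges''; without specifying those edges and verifying properties (i) and (ii), there is no construction. (Your $n=4$ base case is fine: the trigraph with black $C_4$ and two gray diagonals has $\lceil 5/3\rceil=2$ gray edges and is $P_4$-induced-saturated.) For the lower bound, you correctly identify that every realization must be a cograph and that each black/white edge must have a witness $P_4$ upon toggling, but the discharging scheme --- which vertices are assigned to which gray edge, and why each gray edge receives at most three vertices --- is exactly the content of the theorem and is left as ``the creative step.'' As it stands, the argument establishes neither bound, so the proposal has a genuine gap on both sides of the equality.
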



\begin{obs}\label{thm:complement}
A trigraph $T$ is $H$-induced-saturated if and only if $\bar T$ is $\bar H$-induced-saturated. In particular, $\indsat{n}{H}=\indsat{n}{\overline{H}}$.
\end{obs}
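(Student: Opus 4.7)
The plan is to show that complementation gives a bijection between realizations of $T$ and realizations of $\bar T$, under which the induced $H$-containment problem for $T$ translates exactly to the induced $\bar H$-containment problem for $\bar T$. Concretely, I would first observe that if $G=(V(T), E_B(T)\cup S)$ with $S\subseteq E_G(T)$ is an arbitrary realization of $T$, then its graph complement has edge set
\[
\binom{V(T)}{2}\setminus\bigl(E_B(T)\cup S\bigr) = E_W(T)\cup\bigl(E_G(T)\setminus S\bigr),
\]
which, since $E_W(T)=E_B(\bar T)$ and $E_G(T)=E_G(\bar T)$, is a realization of $\bar T$. The map $G\mapsto \bar G$ is an involution on realizations, since the assignment $S\mapsto E_G(T)\setminus S$ is an involution on subsets of $E_G(T)$; hence it is a bijection $\mathcal{R}(T)\to\mathcal{R}(\bar T)$.

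Next I would invoke the elementary fact that $G$ contains $H$ as an induced subgraph iff $\bar G$ contains $\bar H$ as an induced subgraph. Combined with the bijection above, this shows that no realization of $T$ contains an induced $H$ if and only if no realization of $\bar T$ contains an induced $\bar H$, which is the first condition in the definition of induced-saturation.

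For the second condition, note that $E_B(T)\cup E_W(T)=E_B(\bar T)\cup E_W(\bar T)$, so the sets of edges one is allowed to recolor gray are the same in $T$ and $\bar T$. Furthermore, for any such edge $e$ we have $\overline{T_e}=(\bar T)_e$ directly from the definitions, since recoloring $e$ gray commutes with swapping black and white. Applying the realization bijection to $T_e$ and $(\bar T)_e$, we conclude that $T_e$ has a realization containing an induced $H$ iff $(\bar T)_e$ has a realization containing an induced $\bar H$. Putting the two conditions together yields the claimed equivalence, and the equality $\indsat{n}{H}=\indsat{n}{\bar H}$ then follows because $|E_G(T)|=|E_G(\bar T)|$, so minimizing over $H$-induced-saturated trigraphs on $n$ vertices gives the same value as minimizing over $\bar H$-induced-saturated trigraphs on $n$ vertices.

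There is no real obstacle: the trigraph complement was defined precisely to preserve the gray edges and swap black with white, which makes this duality essentially automatic. The only technical point worth stating carefully is the involution on $\mathcal{R}(T)$ described above, and this is the step I would be most careful to verify explicitly in the write-up.
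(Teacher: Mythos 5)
Your proposal is correct and follows essentially the same route as the paper's proof: complementation of realizations together with the fact that $G$ contains an induced $H$ iff $\bar G$ contains an induced $\bar H$. You simply spell out the details (the involution $S\mapsto E_G(T)\setminus S$ and the identity $\overline{T_e}=(\bar T)_e$) that the paper leaves implicit.
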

\begin{proof} 
Suppose a trigraph $T$ has a realization $G$ such that $H$ is an induced subgraph of $G$. Then $\bar H$ is an induced subgraph of $\bar G$. Using the definition of $\bar T$, 
$\bar G$ is a representation of $\bar T$.
It follows that a trigraph $T$ is $H$-induced-saturated if and only if $\bar T$ is $\bar H$-induced-saturated.
\end{proof}


\subsection{Minimally $H$-induced-saturated Graphs}

In this paper we show that $\indsat{n}{H}$ is zero for several graphs, which as noted above, means that there exists a graph that is $H$-induced-saturated. 
This leads to the natural question: What is the minimum number of edges in such a graph? 

\begin{defn}
For a graph $H$ and whole number $n$ with $\indsat{n}{H}=0$, we define 
$$\sis{n}{H}:=\min\{e(G):v(G)=n \text{ and }G\text{ is }H\text{-induced-saturated}\}.$$

We say a graph $G$ on $n$ vertices with $\sis{n}{H}$ edges is {\bf minimally $H$-induced-saturated}.\end{defn}

By Observation \ref{thm:complement}, the \emph{maximum} number of edges in an $n$-vertex $H$-induced-saturated graph is 
${n \choose 2} - \sis{n}{\overline H}$.

In this paper we will show that the following graphs have induced-saturation number zero for $n$ sufficiently large: $\paw$, stars $K_{1,t}$, $C_4$, odd cycles,  some modifications of even cycles, and matchings. 
Additionally, we provide bounds on $\sis{n}{H}$ for graphs listed above.  In particular, we characterize the $\paw$-induced-saturated graphs, which in turn completely determines $\sis{n}{\paw}$.  We also determine $\sis{n}{K_{1,t}}$ within a factor of 2 and show that the upper bound is correct for$K_{1,3}$.  Finally, we introduce the induced-saturation number of a family of graphs and show that while every graph in a family may have induced-saturation number zero, the family itself could have positive induced-saturation number.


%
%

\section{The Paw}
In this section we provide a construction that shows $\indsat{n}{\paw} = 0$ for $n \ge 7$.  We then show that our construction characterizes all $\paw$-induced-saturated graphs, allowing us to completely determine $\sis{n}{\paw}$ for $n \ge 7$.  

This construction, given in Construction \ref{cons:paw} requires $n \ge 7$, and since Theorem \ref{paw_unique} will show that these are the only $\paw$-induced-saturated graphs, we deduce that $\indsat{n}{\paw}$ is nonzero for $n \in \{4,5,6\}$.  The exact values for such $n$ are provided in Table \ref{tab:paw}. 

Table~\ref{tab:paw} exhibits paw-induced-saturated trigraphs on $n$ vertices with only one gray edge for $n \in \{5,6\}$. Since $\indsat{n}{\paw}>0$, this establishes $\indsat{n}{\paw}=1$ for such $n$.

For $n=4$, Table~\ref{tab:paw} gives a 4-vertex, paw-induced-saturated trigraph with two gray edges. To show that $\indsat{4}{\paw} = 2$, we argue that any 4-vertex trigraph $T$ with only one gray edge is not \paw induced saturated. 

$T$ has at least two black edges, otherwise chaning a white edge to gray does not result in a realization with an induced \paw.  Now suppose $T$ has no white edges. Since it has precisely one gray edge, its black edges form $K_4-e$, and changing the black edge whose endpoints have black degree three to a gray edge does not result in a realization with an induced \paw.
Next, suppose $T$ has at least two white edges. Since \paw has precisely two nonedges, changing a black edge to gray does not result in a realization with an induced \paw., unless $T$ already had such a realization.  Therefore $T$ has precisely one white edge. If the gray edge of $T$ is incident to the white edge, then \paw is a realization, so the black edges induce $C_4$. Since $C_4 \not\subseteq \paw$, changing the white edge to gray does not create an induced \paw. 

\begin{table}[htdp]
\newcolumntype{S}{>{\centering\arraybackslash} m{.23\linewidth} }
\newcolumntype{T}{>{\centering\arraybackslash} m{.26\linewidth} }
\centering
\caption{Values of $\indsat{n}{\paw}$ for $4\leq n\leq6$ and trigraphs realizing those values}
\label{tab:paw}
\begin{tabular}{S|T}
\  & trigraph\\
\hline \\[-2ex] 
\ $\indsat{4}{\paw}=2$ & 
\begin{tikzpicture}
\node at (0,0) [vx](1){};
\node at (0,.75) [vx](0){};
\node at (-1.25,-.5) [vx](2){};
\node at (1.25,-.5) [vx](3){};
\draw (0)--(2)--(1)--(3)--(0);
\draw[grayedge] (0)--(1);
\draw[grayedge] (2)--(3);
\end{tikzpicture} \\
\hline \\[-2ex]
\ $\indsat{5}{\paw}=1$ & 
\begin{tikzpicture}
\node at (0,0) [vx](2){};
\node at (1,0) [vx](3){};
\node at (2,0) [vx](4){};
\node at (-1,1) [vx](0){};
\node at (-1,-1)[vx](1){};
\foreach \x in {2,3,4}{\draw (0)--(\x)--(1);}
\draw[grayedge] (0)--(1);
\end{tikzpicture}\\
\hline \\[-2ex]
\ $\indsat{6}{\paw}=1$ & 
\begin{tikzpicture}
\node at (0,0) [vx](2){};
\node at (1,0) [vx](3){};
\node at (2,0) [vx](4){};
\node at (3,0) [vx](5){};
\node at (-1,1) [vx](0){};
\node at (-1,-1)[vx](1){};
\foreach \x in {2,3,4,5}{\draw (0)--(\x)--(1);}
\draw[grayedge] (0)--(1);
\end{tikzpicture} \\
\end{tabular}
\end{table}

Having established $\indsat{n}{\paw}$ for small values of $n$, we now present our construction.

\begin{cons}\label{cons:paw}
Let $G$ be a graph with at most one trivial component, where each nontrivial component is complete multipartite, each with at least three parts, at most one of which contains only one vertex, and the remainder of which have order at least three. 
\end{cons}


\begin{prop}
The graph $G$ in Construction \ref{cons:paw} is \paw-induced-saturated. 
\end{prop}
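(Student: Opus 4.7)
The plan is to verify the two defining properties of a \paw-induced-saturated graph for $G$: (a) $G$ is \paw-free, and (b) adding any non-edge or deleting any edge of $G$ creates an induced \paw. The key structural fact I use throughout is that \paw\ has exactly four edges and two non-edges, and both non-edges are incident to the unique leaf (the degree-$1$ vertex).

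For (a), first I would show that any single nontrivial component $C$ of $G$ is \paw-free. If $\{a,b,c,d\} \subseteq V(C)$ induced a \paw\ with $d$ the leaf attached only to the apex $a$, the non-edges $db$ and $dc$ would force $b$, $c$, $d$ into a common part of the multipartite structure of $C$, contradicting $bc \in E(C)$. Next I would rule out a \paw\ spanning more than one component: any 4-vertex set meeting two or more components has at least three cross-component non-edges (in every possible split $3{+}1$, $2{+}2$, $2{+}1{+}1$, or $1{+}1{+}1{+}1$), while \paw\ has only two non-edges.

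For the addition of a non-edge $uv$, I treat two cases. If $u, v$ lie in a common part $U_1$ of a component $C$, then $|U_1| \geq 2$ forces $|U_1| \geq 3$ (since non-singleton parts have size at least three), so I pick $w \in U_1 \setminus \{u,v\}$ and $x$ in any other part of $C$; the set $\{u,v,w,x\}$ in $G+uv$ has edges $uv, ux, vx, wx$ and no others, giving a \paw\ with apex $x$ and leaf $w$. If instead $u, v$ lie in distinct components, then since at most one component is trivial, at least one of the two is nontrivial, say $u$'s. As that component has $\geq 3$ parts, I pick $w_1, w_2$ from two parts different from $u$'s; in $G+uv$ the set $\{u,v,w_1,w_2\}$ has edges $uv, uw_1, uw_2, w_1 w_2$ (no others, since $v$ is in a different component from $w_1, w_2$), giving a \paw\ with apex $u$ and leaf $v$.

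For the deletion of an edge $uv \in E(G)$, the endpoints lie in distinct parts $U_1, U_2$ of a single component $C$. At most one of $U_1, U_2$ is a singleton, so by swapping the roles of $u$ and $v$ if necessary I may assume $|U_1| \geq 3$; then pick $c \in U_1 \setminus \{u\}$ and $a$ in any third part of $C$ (which exists since $C$ has at least three parts). In $G - uv$ the set $\{u,v,a,c\}$ has precisely the non-edges $uv$ (deleted) and $uc$ (same part), yielding a \paw\ with apex $a$ and leaf $u$. The main obstacle throughout is the bookkeeping around the degenerate structural cases---singleton parts, the isolated-vertex component, and components with exactly three parts; the construction's hypotheses (at least three parts per nontrivial component, at most one singleton part per component, and at most one trivial component overall) are calibrated exactly to supply the required witness vertices in each case.
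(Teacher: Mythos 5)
Your proof is correct and follows essentially the same route as the paper's: the same three case splits (add within a part, add between components, delete an edge) with the same witness sets, differing only in that you make explicit the easy facts the paper leaves implicit (e.g., that an induced \paw cannot span components, which also follows immediately from \paw being connected). No gaps.
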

\begin{proof} 
Since \paw is not an induced subgraph of a complete multipartite graph, $G$ contains no induced \paw.
Suppose we add an edge $xy$ such that $x$ and $y$ are in distinct components, say $F_x$ and $F_y$, respectively.  Since at least one of these components, say $F_x$, has at least three parts, $x$ is in some triangle $xab$ in $F_x$. 
Because $y$ is in a different component, $y$ is adjacent to $x$ but not $a$ or $b$. 
Thus $\{x,y,a,b\}$ induces a \paw.

Suppose we add an edge $xy$ such that $x$ and $y$ are in the same component. 
Then in particular, they are in the same part. 
This part has at least two vertices, so by construction it has at least three vertices; choose $z$ distinct from $x$ and $y$ from this part, and let $a$ be in another part of the component. 
Then $\{x,y,z,a\}$ induces a \paw.

Suppose we delete an edge $xy$. 
Then $x$ and $y$ were in different parts of one component, say $F$. 
As $F$ is complete multipartite with at least three parts, there exists a vertex $z$ in a third part of that component. Since at most one part has only one vertex, there is a vertex $a$ in the same part as either $x$ or $y$; say $x$. Then $\{x,y,z,a\}$ induces a \paw.
\end{proof}

\begin{cor}\label{indsat_paw}
For $n \ge 7$, $\indsat{n}{\paw}=0$.
\end{cor}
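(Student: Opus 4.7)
The plan is that the corollary follows almost immediately from the proposition just proved. That proposition establishes that any graph $G$ satisfying Construction \ref{cons:paw} is $\paw$-induced-saturated, which by definition witnesses $\indsat{n}{\paw} = 0$ on $n = v(G)$ vertices. So the only remaining work is to exhibit, for each $n \ge 7$, one explicit $n$-vertex graph that fits the hypotheses of Construction \ref{cons:paw}.

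For any $n \ge 7$, I would propose the complete tripartite graph $K_{1,3,n-4}$ as the witness. It has exactly $1 + 3 + (n-4) = n$ vertices and consists of a single connected complete multipartite component with three parts: one of size $1$, one of size $3$, and one of size $n-4 \ge 3$ (using $n \ge 7$). There are no trivial components, exactly one part has a single vertex, and the remaining parts have order at least three, so every requirement of Construction \ref{cons:paw} is met. The preceding proposition then yields that $K_{1,3,n-4}$ is $\paw$-induced-saturated, which gives $\indsat{n}{\paw} = 0$.

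There is no real obstacle to overcome here: the threshold $n \ge 7$ is precisely the smallest value at which a graph meeting Construction \ref{cons:paw} can exist, since one needs at least three parts, at most one of size $1$ and the rest of size at least $3$, forcing a total of at least $1 + 3 + 3 = 7$ vertices. This is consistent with the small-case analysis already recorded in Table~\ref{tab:paw}, which shows $\indsat{n}{\paw} > 0$ for $n \in \{4, 5, 6\}$.
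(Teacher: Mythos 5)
Your proposal is correct and matches the paper's (implicit) argument: the corollary is an immediate consequence of the preceding proposition, needing only the observation that a graph fitting Construction~\ref{cons:paw} exists on $n$ vertices for every $n \ge 7$, and your witness $K_{1,3,n-4}$ is exactly the family the paper itself uses later when computing $\sis{n}{\paw}$. Your remark that $n=7$ is the smallest order admitting such a graph correctly explains the threshold and is consistent with Table~\ref{tab:paw}.
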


We now show that Construction~\ref{cons:paw} describes all \paw-induced-saturated graphs. 

\begin{thm}\label{paw_unique}
A graph is \paw-induced-saturated if and only if it is as described in Construction \ref{cons:paw}.
\end{thm}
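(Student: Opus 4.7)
The forward direction is the preceding proposition, so the task is to show every $\paw$-induced-saturated graph $G$ has the structure of Construction \ref{cons:paw}. My plan rests on Olariu's classical characterization that a graph is $\paw$-free if and only if each connected component is either triangle-free or complete multipartite. From this I extract the desired structure by alternately invoking the deletion and the addition saturation conditions to rule out the bad configurations.

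First, I rule out nontrivial triangle-free components. If $F$ is a triangle-free component containing an edge $xy$, then an induced $\paw$ must appear in $G - xy$, and it is forced to use $x, y$ as a non-adjacent pair (else that $\paw$ would already lie in $G$). Since the two non-edges of a $\paw$ share its pendant vertex, the other non-edge of the $\paw$ is $xz$ or $yz$; in either case the remaining three vertices of the $\paw$ form a triangle, which by the connectivity of the $\paw$ together with $x, y \in V(F)$ lies entirely in $F$, contradicting triangle-freeness. Combined with the observation that adding an edge between two isolated vertices only produces a new $K_2$ component (and no induced $\paw$ anywhere), this shows $G$ has at most one trivial component and every nontrivial component is complete multipartite; the presence of a triangle in each such component then forces at least three parts.

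Next, I rule out having two singleton parts in the same component. If $\{x\}$ and $\{y\}$ are both singleton parts of a component $F$, then $xy \in E(G)$, and in $G - xy$ both $x$ and $y$ remain adjacent to every vertex of $V(F) \setminus \{x, y\}$. Any induced $\paw$ in $G - xy$ must use $xy$ as a non-edge, and by the $\paw$'s connectivity its other two vertices $z, w$ lie in $F$; then $\{x, y, z, w\}$ induces either $K_4 - xy$ or $C_4$ in $G - xy$, neither of which is a $\paw$. Finally, to rule out parts of size exactly $2$: if $\{x_1, x_2\}$ is such a part, then adding $x_1x_2$ must create an induced $\paw$ containing both $x_1$ and $x_2$; connectivity forces all four vertices of this $\paw$ to lie in $F$, and since $\{x_1, x_2\}$ is the entire part, the remaining vertices $z, w$ lie outside this part and so are adjacent to both $x_1$ and $x_2$ in $F + x_1x_2$. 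The four vertices then induce $K_4$ or $K_4 - zw$, again not a $\paw$. These constraints together exactly match Construction \ref{cons:paw}.

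The main obstacle is pairing each structural constraint with the correct saturation witness: nontrivial triangle-free components and pairs of singleton parts are witnessed by \emph{deleting} an edge, whereas size-$2$ parts and duplicate trivial components are witnessed by \emph{adding} one. Once this pairing is in hand, each case reduces to a short check, because the complete multipartite structure collapses any candidate four-vertex induced subgraph to a small list of graphs, none of which is a $\paw$.
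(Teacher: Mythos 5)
Your argument is correct, and it reaches the characterization by a genuinely different route than the paper. You import Olariu's theorem (every component of a \paw-free graph is triangle-free or complete multipartite) as a black box and then use the two saturation conditions only to prune the resulting list of structures: edge deletion rules out nontrivial triangle-free components and pairs of singleton parts, edge addition rules out size-two parts and a second trivial component. Each of these pruning steps checks out -- in particular, your observation that the two non-edges of a \paw share its pendant vertex, so the triangle left after removing the pendant avoids the modified pair and hence survives into $G$, is exactly what is needed to kill triangle-free components. The paper instead is self-contained: it first proves a local structure lemma (Lemma \ref{lem:paw_obs}) showing that every edge lies in a triangle, every neighborhood induces a complete multipartite graph, and all neighbors of a vertex $v$ see one common independent set outside $N[v]$; assembling these gives that each nontrivial component is complete multipartite with at least three parts, essentially re-deriving the relevant half of Olariu's theorem from \paw-freeness plus saturation. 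The part-size constraints are then dispatched there by a single domination observation ($\{x,y\}$ dominating $F\setminus\{x,y\}$ prevents $x$ and $y$ from ever appearing together in an induced \paw, so changing the pair $xy$ cannot create one), which handles your singleton-pair and size-two-part cases uniformly, whereas you classify the four-vertex induced subgraphs explicitly as $K_4$, $K_4-e$, or $C_4$ and note none is a \paw. Your version buys brevity at the cost of an external citation; the paper's buys self-containedness. Both correctly verify every clause of Construction \ref{cons:paw}.
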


To prove this theorem, we begin by making several observations.

\begin{lem}\label{lem:paw_obs}
Let $G$ be a \paw-induced-saturated graph. Then $G$ has the following properties:
\begin{enumerate}[label=(\alph*), ref=\ref{lem:paw_obs}(\alph*)]
\item\label{triangle} Every edge of $G$ is in a triangle.
\item\label{multipartite} The neighborhood of any vertex of $G$ is a complete multipartite graph.
\item\label{book} Given any non-isolated vertex $v \in V(G)$, there exists a (possibly empty) independent set $S=S(v)$ such that for every $x \in N(v)$, $S = N(x) \setminus N[v]$.
\end{enumerate}
\end{lem}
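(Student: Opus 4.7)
The plan is to prove parts (a), (b), (c) in order, since (b) uses only paw-freeness and (c) builds on both (a) and (b). The recurring tool throughout is a paw-freeness contradiction: whenever four vertices $\{u,p,q,r\}$ satisfy $u$ adjacent to each of $p,q,r$, $pq\in E(G)$, and $pr,qr\notin E(G)$, they induce a paw centered at $u$.

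For (a), let $xy\in E(G)$. By induced-saturation, $G-xy$ contains an induced paw $P$, and since $G$ is paw-free, $P$ must involve $\{x,y\}$ as a non-adjacent pair (otherwise $P$ would lie in $G$ too). In the paw, every non-adjacent pair consists of the pendant vertex together with a triangle vertex not adjacent to it; in both cases the remaining triangle vertex (the degree-$3$ vertex of the paw) is adjacent to both endpoints of the pair. Since removing $xy$ does not affect this adjacency, that vertex completes a triangle through $xy$ in $G$.

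For (b), I would invoke the folklore characterization that a graph is complete multipartite exactly when it has no induced $K_2\cup K_1$. If $N(v)$ contained three vertices $a,b,c$ with $ab\in E(G)$ and $ac,bc\notin E(G)$, then $\{v,a,b,c\}$ would be an induced paw centered at $v$, contradicting paw-freeness.

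For (c), I would first observe that $N(v)$ must have at least two parts: otherwise $N(v)$ is independent, so no edge $vx$ with $x\in N(v)$ lies in a triangle, violating (a). Next, fix $x,y\in N(v)$ and $z\in N(x)\setminus N[v]$; the goal is to show $z\in N(y)$. When $xy\in E(G)$, the set $\{v,x,y,z\}$ would induce a paw centered at $x$ (triangle $vxy$, pendant $z$) unless $yz\in E(G)$. When $x$ and $y$ lie in the same part of $N(v)$, I would route through a vertex $w\in N(v)$ from another part and apply the previous case to $(x,w)$ and then to $(w,y)$. The common external neighborhood $N(x)\setminus N[v]$ then serves as $S(v)$, and its independence is immediate: adjacent $z_1,z_2\in S(v)$ would give a paw centered at $x$ on $\{v,x,z_1,z_2\}$. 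The main obstacle is the same-part case: the direct four-vertex argument does not force a paw when $xy\notin E(G)$, so one must bridge through a vertex $w$ in another part---and the availability of such a $w$ is precisely what the preliminary ``at least two parts'' observation (which rests on (a)) guarantees.
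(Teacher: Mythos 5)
Your proof is correct and follows essentially the same route as the paper: part (a) via the common neighbor inside the paw created by deleting an edge, part (b) via the $K_2\cup K_1$ characterization of complete multipartite graphs, and part (c) by propagating the external neighborhood $N(x)\setminus N[v]$ across $N(v)$ using paw-freeness on $\{v,x,y,z\}$. Your explicit two-case treatment in (c) (adjacent pair directly, same-part pair bridged through a vertex $w$ of another part) is just a more carefully spelled-out version of the paper's ``every vertex of $N(v)\setminus\{x,y\}$ is adjacent to $x$ or $y$, so by symmetry'' step, and your preliminary observation that $N(v)$ has at least two parts plays the role of the paper's appeal to Lemma (a) for the existence of $y\in N(v)\cap N(x)$.
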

\begin{proof}
Lemma~\ref{triangle} holds because deleting any edge in $G$ creates an induced \paw. As a consequence, any vertex has degree either zero or at least two.

Since $G$ does not contain an induced $\paw$, the neighborhood of any vertex cannot contain an induced copy of $K_2 \cup K_1$. 
This is equivalent to the neighborhood being a complete multipartite graph. This gives us Lemma~\ref{multipartite}.

To prove Lemma~\ref{book}, suppose there exists $x \in N(v)$ that has a neighbor not in $N[v]$. (If no such $x$ exists, the claim holds with $S = \emptyset$.) 
Let $S:=N(x) \setminus N[v]$. If $G[S]$ has an edge $ss'$, then $G[v,x,s,s']$ is a paw. Since $G$ is \paw-induced-saturated, we conclude that $S$ is independent.

By Lemma~\ref{triangle}, there exists $y \in N(v) \cap N(x)$. 
If any element $s \in S$ is not adjacent to $y$, then $G[v,x,y,s]$ is a paw with $s$ as the pendant vertex. 
Therefore, $S \subseteq N(y)$, but also $N(y)\backslash N[v]\subseteq S$ or else we would have a paw.  Because $N(v)$ is complete multipartite by Lemma~\ref{multipartite}, every vertex in $N(v)\setminus\{x,y\}$ is adjacent to $x$ or $y$. 
By symmetry, we conclude that for every $z \in N(v)$, $N(z)\setminus N[v] = S$.
\end{proof}

We proceed to the proof of Theorem~\ref{paw_unique}.
\begin{proof}[Proof of Theorem~\ref{paw_unique}]
Let $G$ be a \paw-induced-saturated graph.  It is clear that $G$ has at most one nontrivial component, since adding an edge between two isolated vertices does not create an induced $\paw$.  We now show that every nontrivial component of $G$ is a complete multipartite graph.
Let $v$ be a non-isolated vertex in $G$ and let $S$ be the set given by Lemma~\ref{book}.
By Lemmas~\ref{multipartite} and \ref{book}, $G[N[v] \cup S]$ is a complete multipartite graph, with $v$ and $S$ sharing a part.
So, we need only show $N[v] \cup S$ is a component of $G$. If not, then there exists some vertex $s \in S$ with a neighbor $t \not\in N[v]\cup S$ since we have included the neighborhood of every $x\in N[v]$ and $S$ is an independent set. 
If there exists an edge $xy$ in $G[N(v)]$, then $G[x,y,s,t]$ is a paw, so $N(v)$ is an independent set.
This violates Lemma~\ref{triangle}.

Now, by Lemma~\ref{triangle}, every nontrivial component of $G$ must have at least three parts.
Next, we show that no part in any component of $G$ has order two, and any component has at most two parts of order one. Suppose $x$ and $y$ either make up a part of order two, or are each a part of order one in a component $F$. Then $\{x,y\}$ dominates $F\setminus\{x,y\}$, and so $x$ and $y$ do not appear together in an induced paw, so adding or deleting the edge $xy$ does not create an induced paw. Hence, $G$ being \paw-induced-saturated implies that it can be formed by Construction \ref{cons:paw}.
\end{proof}

\begin{cor}\label{sis paw}For $n \geq 7$, let $n\equiv r \mod 7$, where $0 \le r \le 6$. Then
\begin{equation*}\sis{n}{\paw} = \left\{
\begin{array}{cl}\displaystyle\frac{15}{7}n & \mbox{ if } r =0\\
15\lfloor n/7\rfloor+4(r-1) & \mbox{ if } r \neq 0\end{array}
\right..\end{equation*}
\end{cor}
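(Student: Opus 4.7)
The plan is to use Theorem~\ref{paw_unique} to reduce $\sis{n}{\paw}$ to a combinatorial optimization over the allowed component structure, then solve it. Since every $\paw$-induced-saturated graph is a disjoint union of at most one isolated vertex together with nontrivial complete multipartite components each having at least three parts, at most one of size $1$, and all other parts of size at least $3$, I only need to minimize the total edge count over this class.

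First I would determine $f(m)$, the minimum number of edges in a single nontrivial component on $m$ vertices. If its part sizes are $a_1, \ldots, a_k$, then the edge count is $\binom{m}{2} - \sum_i \binom{a_i}{2}$, so the problem is to maximize $\sum_i \binom{a_i}{2}$ subject to the constraints. By convexity of $\binom{x}{2}$, concentration is optimal: among $3$-part partitions the candidates are $(1, 3, m-4)$ and $(3, 3, m-6)$, and the former wins by $\binom{m-4}{2} - \binom{m-6}{2} - 3 = 2m - 14 \geq 0$ for $m \geq 7$; splitting into $k \geq 4$ parts only decreases the sum further, by a similar comparison. This yields $f(m) = \binom{m}{2} - \binom{m-4}{2} - 3 = 4m - 13$ for $m \geq 7$, with extremal partition $(1, 3, m-4)$.

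Next I would globalize. Let $\varepsilon \in \{0, 1\}$ count the isolated vertices and let the nontrivial components have sizes $m_1, \ldots, m_k$ with each $m_i \geq 7$ and $\sum m_i = n - \varepsilon$. The total edge count is $\sum_i f(m_i) = 4(n - \varepsilon) - 13k$, which depends on the $m_i$ only through $k$. Thus minimizing edges is equivalent to maximizing $13k + 4\varepsilon$ subject to $k \leq \lfloor (n - \varepsilon)/7 \rfloor$ (which is also sufficient: take $k - 1$ components of size $7$ and let the last absorb the remainder, which is at least $7$).

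Finally I would split on $r := n \bmod 7$. If $r = 0$, the choice $\varepsilon = 0$ allows $k = n/7$, giving $4n - 13(n/7) = 15n/7$, and this beats $\varepsilon = 1$ (which forces $k \leq n/7 - 1$). If $r \geq 1$, then $\varepsilon = 1$ permits $k = \lfloor n/7 \rfloor$, giving $4n - 4 - 13\lfloor n/7\rfloor = 15\lfloor n/7\rfloor + 4(r-1)$, and this strictly beats $\varepsilon = 0$ (same $k$, but without the $4\varepsilon$ bonus) by $4$. The only delicate step is the justification that $(1, 3, m-4)$ truly maximizes $\sum \binom{a_i}{2}$ over all admissible partitions; everything after that is arithmetic bookkeeping, already matching the closed form in the corollary.
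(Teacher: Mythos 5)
Your proof is correct and follows essentially the same route as the paper: invoke the characterization of Theorem~\ref{paw_unique}, show each nontrivial component is optimally $K_{1,3,m-4}$ with $4m-13$ edges, and then maximize the number of components together with the choice of whether to include an isolated vertex. Your convexity/majorization phrasing of the per-component minimization is just a cleaner packaging of the paper's direct tripartite edge-count computation; the substance is identical.
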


\begin{proof}
Let $G$ be a minimally \paw-induced-saturated graph on $n$ vertices.
From Theorem \ref{paw_unique}, each nontrivial component of $G$ is a complete multipartite graph with at least three parts.
If some nontrivial component $F$ of $G$ has at least three parts, then we form a \paw-induced-saturated graph with strictly fewer edges
 by dropping edges between two of the parts and forming a single larger part.
Thus each nontrivial component of $G$ is tripartite.  

The number of edges of a complete tripartite graph on $m$ vertices with parts of size $s,t,$ and $m-(s+t)$ is given by $(m-[s+t])(s+t)+st$.   
Given the constraints $s\geq1, t\geq3$, and $m\geq t$, we
see that $(m-[s+t])(s+t)$ is minimized when $s+t$ is minimized, i.e. $s+t=4$; also $st$ is minimized when $s+t$ is minimized.
Therefore, $K_{1,3,m-4}$ obtains the smallest number of edges among all complete tripartite graphs on $m$ vertices.

Now, we may assume $G$ has components $F_0,F_1,\ldots,F_i$ with $n(F_0) \in \{0,1\}$
and for $i>0$, $F_i=K_{1,3,n_i-4}$, where $n(F_0)+\sum_{i=1}^kn_i=n$.
Then: 
$$e(G) = \sum_{i=1}^k e(F_i)= \sum_{i=1}^k (4n_i-13)= 
4n-13k-4v(F_0).$$

Clearly, this is minimized by taking $k$ as big as possible and, subject to this, $n(F_0)=1$.
That is, we take $k=\lfloor n/7 \rfloor$ and 
\begin{displaymath}v(F_0)=\left\{\begin{array}{rl}
0 & \mbox{ if 7 divides $n$}\\
1 & \mbox{ else.}
\end{array}
\right.
\end{displaymath}
\end{proof}
  
\begin{obs}
Given $H$ for which $\sis{n}{H}$ is defined for all sufficiently large $n$,
the function $\sis{n}{H}$ is not necessarily monotone in $n$. In particular, from Corollary~\ref{sis paw} we see for any integer $k \geq 2$, 
$\sis{7k}{\paw}<\sis{7k+2}{\paw}<\sis{7k-1}{\paw}$. 
This is a similarity between minimal induced saturation and saturation: as noted in~\cite{FFS}, the function $\sat{n}{H}$ is not necessarily monotone in $n$ for fixed $H$.
\end{obs}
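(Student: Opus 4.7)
The plan is to simply evaluate $\sis{n}{\paw}$ at each of the three values $n = 7k-1$, $n = 7k$, and $n = 7k+2$ using the closed formula established in Corollary~\ref{sis paw}, and then compare the resulting numerical expressions. Since all three values are in terms of the same parameter $k$, the chain of inequalities will reduce to comparing three simple arithmetic expressions in $k$.

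First I would identify the residue class modulo $7$ for each of the three quantities. For $n = 7k$ we have $r = 0$, so the first branch of the piecewise formula gives $\sis{7k}{\paw} = \tfrac{15}{7}\cdot 7k = 15k$. For $n = 7k+2$ we have $r = 2$ and $\lfloor n/7\rfloor = k$, so the second branch yields $\sis{7k+2}{\paw} = 15k + 4(2-1) = 15k + 4$. The only step requiring a moment of care is $n = 7k-1$: here $r = 6$ (not $-1$), and $\lfloor (7k-1)/7\rfloor = k-1$, so the second branch yields $\sis{7k-1}{\paw} = 15(k-1) + 4(6-1) = 15k + 5$.

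Chaining these three expressions gives
\begin{equation*}
\sis{7k}{\paw} = 15k < 15k + 4 = \sis{7k+2}{\paw} < 15k + 5 = \sis{7k-1}{\paw},
\end{equation*}
which is the claimed inequality and is valid for every integer $k \geq 2$ (the hypothesis $k \geq 2$ is needed to ensure $n = 7k-1 \geq 7$ so that Corollary~\ref{sis paw} applies). The main obstacle, if there is one, is just bookkeeping on the floor function and residue for $7k-1$; everything else is direct substitution. The second sentence of the observation, concerning non-monotonicity of $\sat{n}{H}$, is a citation to~\cite{FFS} and requires no argument here.
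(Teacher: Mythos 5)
Your computation is correct and matches the paper's intent exactly: the paper offers no separate argument for this observation beyond pointing at Corollary~\ref{sis paw}, and your evaluation of the formula at $n=7k$, $7k+2$, and $7k-1$ (with the correct residue $r=6$ and floor $k-1$ in the last case) yields $15k < 15k+4 < 15k+5$ as required.
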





%
%

\section{Stars}
 Recall that $K_{1,2} = P_3$, and $\indsat{n}{P_3} = 0$ for $n \ge 3$, as established in \cite{MS}.  In this section we provide a construction extending this result, to show that for fixed $k \ge 2$ and $n$ sufficiently large, $\indsat{n}{K_{1,k+1}}=0$.  Additionally, our construction, together with a simple argument, determines $\sis{n}{K_{1,k+1}}$ within a factor of two.  The case when $k = 2$, which refers to the graph $K_{1,3}$, commonly known as the claw, will be addressed in further detail in Section \ref{sec:claw}.

\begin{cons}\label{cons:stars}
Fix $k \ge 2$ and $n \ge 3^k$.  Let $z,R$ be positive integers such that $n = z3^k + R$ with $0 \le R < 3^k$.  Let $H$ be the graph $K_3^1 \cartprod K_3^2 \cartprod \cdots \cartprod K_3^k$, where $K_3^i$ denotes a single copy of $K_3$.  In other words, $V(H) = \{(\alpha_1,\dots, \alpha_k) : \alpha_i \in [3]\}$, and $(\alpha_1,\dots, \alpha_k)(\beta_1,\dots, \beta_k) \in E(H)$ iff $\sum \{i:\alpha_i\neq\beta_i\} = 1$.  Define $H'$ where $V(H')$ is  the disjoint union of $V(H)$ and $V(K_R)$, and $E(H')$ consists of $E(H),$ $E(K_R)$ and the edges between $H$ and $K_R$ satisfy: for each $v \in V(K_R)$, $v\alpha \in E(H')$ if and only if $\alpha = (\alpha_1,1,1,\dots,1)$, $\alpha_1 \in [3]$.  
We now define $G$ to be the disjoint union of $z-1$ copies of $H$ and a single copy of $H'$.
\end{cons}

\begin{prop}\label{prop:stars2}
The graphs in Construction \ref{cons:stars} are $K_{1,k+1}$-induced-saturated.
\end{prop}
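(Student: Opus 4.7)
My plan is to verify the three defining conditions of $K_{1,k+1}$-induced-saturation directly on the graph $G$ from Construction~\ref{cons:stars}. The central structural fact I will rely on is that for each vertex $x$ of a copy of $H = K_3^1 \cartprod \cdots \cartprod K_3^k$, the $2k$ neighbors of $x$ partition into $k$ pairs, one per coordinate direction, each pair being an edge and distinct pairs pairwise nonadjacent. Hence $N_H(x) \cong kK_2$ with independence number exactly $k$. Inside $H'$ two further neighborhood types occur: for a base vertex $(\alpha_1, 1, \ldots, 1)$ the set $V(K_R)$ together with the two other base vertices forms a clique, so the neighborhood still has independence number $k$; for a vertex of $V(K_R)$ the neighborhood is $K_{R-1} \cup K_3$, of independence number at most $2 \le k$. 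Thus the independence number of every closed neighborhood is at most $k$, so $G$ contains no induced $K_{1,k+1}$.

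For each non-edge $xy$ of $G$, I will produce an induced $K_{1,k+1}$ in $G+xy$ by centering the star at a well-chosen endpoint. If $x$ and $y$ lie in different components, I center at whichever endpoint sits in a copy of $H$ (or in the $V(H)$-part of $H'$), say $x$, and take one $H$-neighbor of $x$ per coordinate direction; the resulting $k$ pairwise nonadjacent leaves together with $y$ induce $K_{1,k+1}$. When $xy$ lies entirely inside one $V(H)$-portion, $x$ and $y$ differ in some $s \ge 2$ coordinates; I again pick one $H$-neighbor of $x$ per direction, now choosing in each direction an option nonadjacent to $y$, and a short case check on whether the direction is one of the $s$ differing coordinates shows that a valid choice always exists. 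When $x \in V(K_R)$ and $y$ is a non-base vertex of $V(H) \subset V(H')$, I center at $y$ and choose non-base leaves wherever possible; since $V(K_R)$ is adjacent in $H'$ only to base vertices, $x$ is then nonadjacent to every leaf.

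For each edge $xy \in E(G)$ I exhibit an induced $K_{1,k+1}$ in $G - xy$. The driving observation is that if $xy$ lies in an $H$-portion with $x, y$ differing in coordinate $j$, then the unique third vertex $w$ differing from both in coordinate $j$ has $\{x, y\}$ as one of the $k$ edges of $N_H(w) \cong kK_2$; deleting $xy$ turns this edge into a non-edge, and $\{x, y\}$ together with one direction-$i$ neighbor of $w$ for each $i \neq j$ forms an independent set of size $k+1$ in $N_{G-xy}(w)$. When $w$ is a base vertex of $H'$, the chosen leaves all lie in $V(H)$ and their pairwise adjacencies are governed entirely by the Cartesian structure, so $V(K_R) \subset N_{H'}(w)$ does not interfere. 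For an edge inside $K_R$ (requiring $R \ge 2$) I center at the base vertex $(1, 1, \ldots, 1)$ and use the newly-nonadjacent pair of $K_R$-vertices along with one non-base direction-$j$ neighbor for each $j \ge 2$. For an edge joining $V(K_R)$ to a base vertex I center at a different base vertex and use the newly-nonadjacent pair plus one direction-$j$ neighbor for each $j \ge 2$.

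The main technical obstacle throughout is the bookkeeping at the interface between $V(K_R)$ and the base vertices of $H$ in $H'$: since $K_R$-adjacency coincides exactly with being base, in every subcase involving an edge change near this interface the chosen leaves must be coordinated with their base-versus-non-base status to guarantee the induced star remains induced. Once that distinction is tracked carefully, the individual case verifications are short.
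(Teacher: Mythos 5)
Your proof is correct and follows essentially the same route as the paper: establish $K_{1,k+1}$-freeness from the $kK_2$/pigeonhole structure of vertex neighborhoods, then for each added or deleted edge exhibit an explicit induced star by choosing one leaf per coordinate direction, tracking base versus non-base status at the $V(K_R)$ interface. In fact you explicitly treat the addition of an edge between a vertex of $V(K_R)$ and a non-base vertex of $F$ --- a genuine non-edge of $H'$ that the paper's proof passes over when it asserts that the only edges one can add within $H'$ lie inside $F$ --- so your case analysis is, if anything, slightly more complete.
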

\begin{proof}
Given fixed $n$ and $k$, let $G$ and $R$ be as defined in Construction \ref{cons:stars}.  Let $F$ denote the subgraph of $H'$ isomorphic to $H$.   Suppose to the contrary that $G$ contains an induced $K_{1,k+1}$ with center $x$.  Suppose first that $x$ is in a copy of $H$.  Since $x$ and its neighbors can be represented with $k$-dimensional vectors, by the Pigeonhole Principle, any $k+1$ neighbors of $x$ have two vectors which differ in exactly the same coordinate from each other.  Thus, $x$ cannot have $k+1$ neighbors which form an independent set, and $H$ is $K_{1,k+1}$-free.  

If $H'$ contains the induced $K_{1,k+1}$, then $x$ cannot be in the $K_R$ as the neighborhood of $x$ would be a clique.  So $x$ is in $F$.  If this induced $K_{1,k+1}$ contains no vertices from the copy of $K_R$, then the above argument produces  a contradiction.  Thus, this $K_{1,k+1}$ contains a vertex from the copy of $K_R$, and without loss of generality, we may assume that $x$ represented by $(1,1,\dots,1)$ in $F$.  Consequently, our $K_{1,k+1}$ has exactly one vertex in $K_R$, but then contains no vertices of the form $(\alpha_1,1,1,\dots,1)$ other than $x$.  Hence by pigeon hole, $x$ has at most $k-1$ other neighbors in our $K_{1,k+1}$ from $F$, a contradiction.   So $G$ is $K_{1,k+1}$-free.

It is clear that every vertex in a copy of $H$ (or in $F$) is the center of an induced $K_{1,k}$.  Thus, if we add an edge between two components of $G$, one component must be a copy of $H$, and we obtain an induced $K_{1,k+1}$.  Thus, it remains to consider adding an edge within a component.  Note that by the construction of $H'$, the only possible way to add an edge is within $F$, which is isomorphic to $H$.  So, it suffices to consider adding an edge to a copy of $H$.  Suppose we add the edge $uv$.  Without loss of generality, we may assume that $u$ is represented by $(1,1,\dots, 1)$.  Since $u$ and $v$ were not adjacent in $H$, their corresponding vectors must differ in at least two coordinates, say the first and second.  As a consequence, $v$ is adjacent to neither $y$ nor $w$, where $y \in \{(2,1,1,\dots,1), (3,1,1,\dots,1)\}$ and $w \in \{(1,2,1,1,\dots,1), (1,3,1,1,\dots,1)\}$.  Thus, $\{u,v,w,y\}$ is an induced $K_{1,3}$ centered at $u$.  To this set we add vertices $\alpha^3,\alpha^4,\dots, \alpha^k$, where $\alpha^i$ has all coordinates equal to 1 except that the $i$th coordinate is either 2 or 3.  This induces $K_{1,k+1}$.

Lastly, suppose we remove an edge $uv$.  There are three cases to consider.  The first case is if $uv$ is in a copy of $H$ (or in $F$).  Here, we may assume $u = (2,1,1,\dots,1)$ and $v = (3,1,1,\dots,1)$.  The second case is if both $u$ and $v$ are in $K_R$.  The last case is if only, say $v$, is in $K_R$.  Here, we may again assume that $u = (2,1,1,\dots,1)$.  In all three cases, $(1,1,\dots,1)$ together with $u,v,\alpha^2,\dots,\alpha^k$ as defined above, induce a $K_{1,k+1}$.  This completes the lemma.
\end{proof}

\begin{cor}\label{cor:stars}
For fixed $k \ge 2$ and $n \ge 3^k$, $\indsat{n}{K_{1,k+1}} = 0$.
\end{cor}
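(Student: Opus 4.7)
The plan is to deduce the corollary directly from Proposition~\ref{prop:stars2}. Two items must be verified: first, that the graph $G$ in Construction~\ref{cons:stars} has exactly $n$ vertices; second, that the existence of such an $H$-induced-saturated graph forces $\indsat{n}{H}=0$.

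For the vertex count, I would simply check that Construction~\ref{cons:stars} gives $(z-1)$ disjoint copies of $H$, each of order $3^k$, together with one copy of $H'$, which has $|V(H)|+|V(K_R)| = 3^k + R$ vertices. Summing yields $(z-1)\cdot 3^k + 3^k + R = z\cdot 3^k + R = n$, which uses the assumption $n \ge 3^k$ so that $z \ge 1$.

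For the induced-saturation conclusion, I would invoke the remark in the definition of $\indsat{n}{H}$: a trigraph $T$ with $E_G(T)=\emptyset$ has a unique realization, so the existence of any $n$-vertex graph $G$ that is $K_{1,k+1}$-induced-saturated in the graph sense (that is, contains no induced $K_{1,k+1}$ but creates one under any edge addition or deletion) directly gives a trigraph $T$ with $V(T)=V(G)$, $E_B(T)=E(G)$, $E_W(T)=\binom{V(G)}{2}\setminus E(G)$, and $E_G(T)=\emptyset$ that is $K_{1,k+1}$-induced-saturated with zero gray edges. Proposition~\ref{prop:stars2} supplies exactly such a $G$, so $\indsat{n}{K_{1,k+1}} \le 0$, and nonnegativity gives equality.

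There is no genuine obstacle here: the work has all been done in Proposition~\ref{prop:stars2}, and the corollary is just a repackaging in the language of trigraphs together with a vertex-count check. The one cosmetic point worth being careful about is ensuring that $z \ge 1$ (hence that $H'$ exists) throughout, which is guaranteed by the hypothesis $n \ge 3^k$.
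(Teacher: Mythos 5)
Your proposal is correct and matches the paper's (implicit) reasoning exactly: the corollary is stated without proof precisely because it follows from Proposition~\ref{prop:stars2} together with the vertex count $(z-1)\cdot 3^k + (3^k + R) = n$ and the observation that a gray-edge-free trigraph has a unique realization, so an $H$-induced-saturated graph witnesses $\indsat{n}{H}=0$. Your two verification steps are the right ones and nothing is missing.
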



\begin{thm}\label{bound:stars}
For $n \geq 2\cdot 3^k$ and $k \geq 2$, there exist constants $c_1=c_1(k)$ and $c_2 = c_2(k)$ such that $n\frac{k}{2} - c_1 \le \sis{n}{K_{1,k+1}} \leq nk+c_2$.
\end{thm}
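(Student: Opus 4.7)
The plan is to prove the upper and lower bounds separately.

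For the upper bound, I will apply Construction~\ref{cons:stars} directly. Each copy of $H = K_3 \cartprod \cdots \cartprod K_3$ ($k$ factors) has $3^k$ vertices, each of degree $2k$, so $e(H) = k \cdot 3^k$. The graph $H'$ adds to $H$ a copy of $K_R$ together with $3R$ edges joining $K_R$ to the three vertices of $H$ of the form $(\alpha_1,1,\ldots,1)$, so $e(H') = k \cdot 3^k + \binom{R}{2} + 3R$. Summing over the $z-1$ copies of $H$ and the single copy of $H'$, and substituting $z \cdot 3^k = n - R$, yields
\[
e(G) \;=\; z \cdot k \cdot 3^k + \binom{R}{2} + 3R \;=\; kn + \binom{R}{2} + (3-k)R.
\]
Since $0 \le R < 3^k$, the additive error is bounded in absolute value by a constant $c_2(k)$ depending only on $k$, so Proposition~\ref{prop:stars2} gives $\sis{n}{K_{1,k+1}} \le kn + c_2(k)$.

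For the lower bound, let $G$ be any $K_{1,k+1}$-induced-saturated graph on $n$ vertices. The key structural observation is that for each non-edge $vw$ of $G$, the induced $K_{1,k+1}$ appearing in $G+vw$ must use the new edge $vw$ (since $G$ itself is $K_{1,k+1}$-free); because the only adjacent pairs in $K_{1,k+1}$ are center--leaf, one of $v,w$ is the center, and its $k$ remaining neighbors in that star form an independent set of size $k$ inside its $G$-neighborhood. Let $A$ denote the set of vertices whose $G$-neighborhood contains an independent $k$-set, and set $B = V(G)\setminus A$. Every vertex of $A$ has degree at least $k$. For each $v \in B$, every non-neighbor of $v$ lies in $A$, so $B \subseteq N[v]$; this forces $B$ to induce a clique and every $v \in B$ to satisfy $\deg(v) \ge |B|-1$.

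Double-counting degrees then gives
\[
2 e(G) \;\ge\; k|A| + (|B|-1)|B| \;=\; kn - k|B| + |B|(|B|-1),
\]
and minimizing the right-hand side as a quadratic in $|B|$ (the minimum occurs near $|B| = (k+1)/2$) yields $e(G) \ge kn/2 - (k+1)^2/8$, so one may take $c_1(k) = (k+1)^2/8$. The main obstacle I anticipate is the structural step above---confirming that every non-edge really forces one of its endpoints to be a center---after which the degree-sum calculation is routine. I do not see a way within this framework to close the factor-of-two gap between $kn/2$ and $kn$; any improvement would seem to require tracking how the independent $k$-sets in the neighborhoods of different vertices of $A$ overlap.
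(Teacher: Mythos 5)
Your proof is correct. The upper bound is the same as the paper's: both apply Construction~\ref{cons:stars} and Proposition~\ref{prop:stars2}; you compute $e(G)=kn+\binom{R}{2}+(3-k)R$ exactly, while the paper settles for the cruder estimate $e(G)\le nk-k\cdot 3^k+\binom{2\cdot 3^k}{2}$ obtained by lumping all of $H'$ into a constant --- either way the error is $O_k(1)$. For the lower bound the two arguments share the same seed (for a non-edge $vw$, the induced $K_{1,k+1}$ in $G+vw$ must use $vw$, so one endpoint is the center) but diverge in how the exceptional set is handled. The paper takes $S=\{x:\deg(x)\le k-1\}$ and shows $|S|\le k$, since two nonadjacent vertices of $S$ would both be too low-degree to center the required star; the degree sum then gives $e(G)\ge \frac{nk}{2}-\frac{k^2}{2}$. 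You instead take $B$ to be the vertices whose neighborhood contains no independent $k$-set (a superset of $S$), observe that every non-edge has an endpoint outside $B$, conclude that $B$ induces a clique, and optimize the resulting quadratic to get $c_1=(k+1)^2/8$. Your version is slightly more structural and does not need an a priori bound on $|B|$ --- a large $B$ simply pays for itself in clique edges --- and it yields a marginally better constant; the paper's version is shorter. Both are valid, and neither closes the factor-of-two gap, as you note.
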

\begin{proof}
Given fixed $n$ and $k$, let $G$ and $R$ be as defined in Construction \ref{cons:stars}.

We  establish $e(G)$ by considering vertex degrees. The component $H'$ has at most $2\cdot 3^k$ vertices, and so (trivially) at most ${2\cdot 3^k \choose 2}$ edges. The remaining vertices, of which there are at most $n-3^k$, all have degree $2k$ for a contribution of at most $(n-3^k)k$ edges. All told, $e(G) \leq nk-k\cdot 3^k + {2\cdot 3^k \choose 2}$.

To show the lower bound, suppose that $G$ is a $K_{1,k+1}$-induced-saturated graph.  Let $S=\{x \in V(G) \colon \deg(x) \le k-1\}$. 
We claim that $|S| \le k$.

If $|S| > k$, then there exist $x,y \in S$ such that $xy \notin E(G)$.  Let $G'$ denote $G + xy$.  As $G$ was $K_{1,k+1}$-induced-saturated, $G'$ must contain an induced $K_{1,k+1}$, using the edge $xy$ with either $x$ or $y$ as the center of this $K_{1,k+1}$.  However, as both $x$ and $y$ are adjacent to at most $k - 1$ vertices in $G$, this cannot happen.  So $|S| \le k$, as claimed.

Observe:  $$
|E(G)| \ge  \frac{1}{2}\left(k(n - |S|) + \sum\limits_{x \in S} \deg(x)\right) \ge \frac{nk}{2} - \frac{k^2}{2}.$$

This establishes the lower bound.
\end{proof}

It worth noting that we can extend Construction \ref{cons:stars}, as any graph formed as a Cartesian product of exactly $k$ cliques, each of size at least three, is $K_{1,k+1}$-induced-saturated.

%
%

\section{The Claw}\label{sec:claw}

For sufficiently large $n$, Theorem \ref{bound:stars} states that the order of magnitude of $\sis{n}{K_{1,k+1}}$ is linear in $n$, and in particular, we know the coefficient within a factor of two.  In this section, we will determine the coefficient of $\sis{n}{K_{1,3}}$, which coincides with the upper bound given in Theorem \ref{bound:stars}.  Additionally, we will provide better constructions than than that in Construction \ref{cons:stars}, which will ultimately determine $\sis{n}{K_{1,3}}$ within an additive constant of four.

Values of $\indsat{n}{\claw}$ were determined for $4\leq n \leq 10$ by computer search\footnote{A program was written in C++ and is available at \url{http://www.math.unl.edu/~s-sbehren7/main/Data.html}.} and are listed in \hyperref[tab:claws]{Table~\ref*{tab:claws}}, along with trigraphs that achieve the minimum number of gray edges.
This together with Corollary \ref{cor:stars} determines $\indsat{n}{\claw}$ for all $n$.
We now turn our attention to $\sis{n}{\claw}$.

\begin{table}[htdp] 
\newcolumntype{V}[1]{>{\centering\arraybackslash} m{#1\linewidth} }
\centering
\begin{tabular}{|c||c|}
\hline 
\begin{tabular}{V{.205}|V{.2}}
$\indsat{4}{\claw}=3$ &
\begin{tikzpicture}[scale=.75]
\node at (1,1) [vx, scale=.75](0){};
\node at (0,0) [vx, scale=.75](1){};
\node at (2,0) [vx, scale=.75](2){};
\node at (3,.5) [vx, scale=.75](3){};
\draw[sgrayedge] (0)--(1)--(2)--(0);
\end{tikzpicture}
\\
\hline \\[-2ex]
$\indsat{5}{\claw}=3$ &
\begin{tikzpicture}[scale=.75]
\node at (1,1) [vx, scale=.75](0){};
\node at (0,0) [vx, scale=.75](1){};
\node at (2,0) [vx, scale=.75](2){};
\node at (3,.5) [vx, scale=.75](3){};
\node at (4,.5)[vx, scale=.75](4){};
\draw[sgrayedge] (0)--(2)--(3)--(0);
\draw (0)--(1)--(2);
\end{tikzpicture}
\\
\hline \\[-2ex]
$\indsat{6}{\claw}=3$ & 
\begin{tikzpicture}[scale=.75]
\node at (0,0) [vx, scale=.75](0){};
\node at (1,.75) [vx, scale=.75](1){};
\node at (1,-.75)[vx, scale=.75](2){};
\node at (2,0)[vx, scale=.75](3){};
\node at (3,0)[vx, scale=.75](4){};
\node at (4,0)[vx, scale=.75](5){};
\draw (0)--(1)--(3)--(2)--(0)--(3);
\draw[sgrayedge] (1)--(2)--(4)--(1);
\end{tikzpicture}
\\
\hline \\[-2ex]
$\indsat{7}{\claw}=2$&
\begin{tikzpicture}
\node at (0,0)[vx, scale=.75](0){};
\node at (1,0)[vx, scale=.75](1){};
\node at (2,0)[vx, scale=.75](2){};
\node at (.5,.5)[vx, scale=.75](3){};
\node at (1.5,.5)[vx, scale=.75](4){};
\node at (.5,-.5)[vx, scale=.75](5){};
\node at (1.5,-.5)[vx, scale=.75](6){};
\draw (0)--(3)--(4)--(2)--(6)--(5)--(0);
\draw (3)--(1)--(5);
\draw (4)--(1)--(6);
\draw[sgrayedge] (3)--(5);
\draw[sgrayedge] (4)--(6);
\end{tikzpicture}\\
\hline \\[-2ex]
$\indsat{8}{\claw}=2$& 
\begin{tikzpicture}
\node at (0,0)[vx, scale=.75](0){};
\node at (1,0)[vx, scale=.75](1){};
\node at (2,0)[vx, scale=.75](2){};
\node at (.5,.5)[vx, scale=.75](3){};
\node at (1.5,.5)[vx, scale=.75](4){};
\node at (.5,-.5)[vx, scale=.75](5){};
\node at (1.5,-.5)[vx, scale=.75](6){};
\draw (0)--(3)--(4)--(2)--(6)--(5)--(0);
\draw (3)--(1)--(5);
\draw (4)--(1)--(6);
\draw[sgrayedge] (3)--(5);
\draw[sgrayedge] (4)--(6);
\node at (2.75,0)[vx, scale=.75](7){};
\end{tikzpicture}
\\
\hline\\[-2ex]
$\indsat{9}{\claw}=0$& 
\begin{tikzpicture}[xscale=.75, yscale=.5]
\node at (0,0)[vx, scale=.75](0){};
\node at (1,.5)[vx, scale=.75](1){};
\node at (2,0)[vx, scale=.75](2){};
\node at (.5,1)[vx, scale=.75](3){};
\node at (1.5,1.5)[vx, scale=.75](4){};
\node at (2.5,1)[vx, scale=.75](5){};
\node at (0,2)[vx, scale=.75](6){};
\node at (1,2.5)[vx, scale=.75](7){};
\node at (2,2)[vx, scale=.75](8){};
\draw (0)--(1)--(2)--(0);
\draw (3)--(4)--(5)--(3);
\draw (6)--(7)--(8)--(6);
\draw (0)--(3)--(6)--(0);
\draw (1)--(4)--(7)--(1);
\draw (2)--(5)--(8)--(2);
\end{tikzpicture}
\\
\end{tabular}
&\begin{tabular}{V{.22}|V{.19}}
\multirow{5}{*}{$\indsat{10}{\claw}=0$} & 
\begin{tikzpicture}[xscale=.75, yscale=.5]
\clip (-.25,-.25) rectangle (3.5, 2.5cm + 2ex);  
\node at (0,0)[vx, scale=.75](0){};
\node at (1,.5)[vx, scale=.75](1){};
\node at (2,0)[vx, scale=.75](2){};
\node at (.5,1)[vx, scale=.75](3){};
\node at (1.5,1.5)[vx, scale=.75](4){};
\node at (2.5,1)[vx, scale=.75](5){};
\node at (0,2)[vx, scale=.75](6){};
\node at (1,2.5)[vx, scale=.75](7){};
\node at (2,2)[vx, scale=.75](8){};
\node at (3.25,1)[vx, scale=.75](9){};
\draw (0)--(1)--(2)--(0);
\draw (3)--(4)--(5)--(3);
\draw (6)--(7)--(8)--(6);
\draw (0)--(3)--(6)--(0);
\draw (1)--(4)--(7)--(1);
\draw (2)--(5)--(8)--(2);
\end{tikzpicture} 
\\ \cline{2-2}\\[-2ex]
&\begin{tikzpicture}[xscale=.75, yscale=.5]
\node at (0,0)[vx, scale=.75](0){};
\node at (1,.5)[vx, scale=.75](1){};
\node at (2,0)[vx, scale=.75](2){};
\node at (.5,1)[vx, scale=.75](3){};
\node at (1.5,1.5)[vx, scale=.75](4){};
\node at (2.5,1)[vx, scale=.75](5){};
\node at (0,2)[vx, scale=.75](6){};
\node at (1,2.5)[vx, scale=.75](7){};
\node at (2,2)[vx, scale=.75](8){};
\node at (3.25,1)[vx, scale=.75](9){};
\draw (0)--(1)--(2)--(0);
\draw (3)--(4)--(5)--(3);
\draw (6)--(7)--(8)--(6);
\draw (0)--(3)--(6)--(0);
\draw (1)--(4)--(7)--(1);
\draw (2)--(5)--(8)--(2);
\foreach \x in {2,5,8} {\draw (9)--(\x);}
\end{tikzpicture} 
\\ \cline{2-2}\\[-2ex]
&\begin{tikzpicture}[xscale=.75, yscale=.5]
\node at (0,0)[vx, scale=.75](0){};
\node at (1,.5)[vx, scale=.75](1){};
\node at (2,0)[vx, scale=.75](2){};
\node at (.5,1)[vx, scale=.75](3){};
\node at (1.5,1.5)[vx, scale=.75](4){};
\node at (2.5,1)[vx, scale=.75](5){};
\node at (0,2)[vx, scale=.75](6){};
\node at (1,2.5)[vx, scale=.75](7){};
\node at (2,2)[vx, scale=.75](8){};
\node at (3.25,1)[vx, scale=.75](9){};
\draw (0)--(1)--(2)--(0);
\draw (3)--(4)--(5)--(3);
\draw (6)--(7)--(8)--(6);
\draw (0)--(3)--(6)--(0);
\draw (1)--(4)--(7)--(1);
\draw (2)--(5)--(8)--(2);
\foreach \x in {2,5,8} {\draw (9)--(\x);}
\draw (9) to [bend right=10](4);
\draw (9) to [bend left =25](3);
\end{tikzpicture} 
\\ \cline{2-2}\\[-2ex]
&\begin{tikzpicture}[xscale=.75, yscale=.5]
\node at (0,0)[vx, scale=.75](0){};
\node at (1,.5)[vx, scale=.75](1){};
\node at (2,0)[vx, scale=.75](2){};
\node at (.5,1)[vx, scale=.75](3){};
\node at (1.5,1.5)[vx, scale=.75](4){};
\node at (2.5,1)[vx, scale=.75](5){};
\node at (0,2)[vx, scale=.75](6){};
\node at (1,2.5)[vx, scale=.75](7){};
\node at (2,2)[vx, scale=.75](8){};
\node at (3.25,1)[vx, scale=.75](9){};
\draw (0)--(1)--(2)--(0);
\draw (3)--(4)--(5)--(3);
\draw (6)--(7)--(8)--(6);
\draw (0)--(3)--(6)--(0);
\draw (1)--(4)--(7)--(1);
\draw (2)--(5)--(8)--(2);
\foreach \x in {2,5,8} {\draw (9)--(\x);}
\draw (9) to [bend right=10](4);
\draw (9) to [bend left=15](1);
\draw (9) to [bend right=35](7);
\end{tikzpicture} 
\\ \cline{2-2}\\[-2ex]
&\begin{tikzpicture}[scale=.5]
\foreach \x in {0,1,2,3,4} {\node at (360/5*\x:1cm)[vx, scale=.75](a\x){};}
\foreach \x in {0,1,2,3,4} {\node at (360/5*\x+360/10:2cm)[vx, scale=.75](b\x){};}
\draw (a0)--(b4)--(a4)--(b3)--(a3)--(b2)--(a2)--(b1)--(a1)--(b0)--(a0);
\draw (0,0) circle (2cm);
\draw (0,0) circle (1cm);
\draw (a0)--(a2)--(a4)--(a1)--(a3)--(a0);
\end{tikzpicture}
\\
\end{tabular}
\tabularnewline\hline
\end{tabular}
\caption{Values of $\indsat{n}{\claw}$ for $4\leq n\leq10$ along with trigraphs realizing those values. All $\claw$-induced-saturated graphs for $n=9$ and $n=10$ are shown.}
\label{tab:claws}
\end{table}

\iftoggle{sub}{
\begin{thm}\label{thm:claw-sis}
For $n \ge 9$, $n \neq 14, 17,$ $2n - 2 \le \sis{n}{K_{1,3}} \le 2n + 2$.
\end{thm}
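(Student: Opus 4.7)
The plan is to prove the two bounds separately: the upper bound via explicit constructions, the lower bound via structural analysis of claw-induced-saturated graphs.

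For the upper bound, the main building block is the $9$-vertex rook's graph $K_3 \cartprod K_3$, which has $18 = 2\cdot 9$ edges and is claw-induced-saturated by specializing Proposition~\ref{prop:stars2} to $k=2$. A key property is that every vertex of this graph has two non-adjacent neighbors, which implies that the disjoint union of claw-induced-saturated graphs with this property remains claw-induced-saturated (adding a cross-edge creates an induced claw centered at the endpoint lying in a graph where the other endpoint becomes a third independent leaf). Adjoining a single isolated vertex to $K_3 \cartprod K_3$ gives an $n = 10$ construction with $18 = 2n - 2$ edges. I would then exhibit claw-induced-saturated graphs on $11, 12, 13, 15, 16$ vertices, each with at most $2s+2$ edges, likely as carefully modified rook's-graph-like constructions (perhaps $K_3 \cartprod K_3$ with a few extra vertices attached to a single row or column so as to preserve that every vertex still has two non-adjacent neighbors). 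An arithmetic case analysis then shows that every $n \ge 9$ except $n = 14$ and $n = 17$ can be written as a sum of block sizes from $\{9, 10, 11, 12, 13, 15, 16\}$ using at most one ``lossy'' size (the one contributing the extra $+2$), yielding a claw-induced-saturated graph on $n$ vertices with at most $2n + 2$ edges; the exceptional values are precisely those that cannot be expressed as such a sum.

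For the lower bound, let $G$ be claw-induced-saturated on $n$ vertices. I would first establish: (a) $G$ has at most one isolated vertex, since adding an edge between two isolated vertices creates only $K_2$ and no claw; (b) no vertex of $G$ has degree $1$, since if $\deg(v) = 1$ with neighbor $u$, then the induced claw appearing upon removing $uv$ must contain both $v$ and $u$, yet every possible role (either as center or as non-adjacent leaves) is ruled out by $v$ having no neighbors in $G - uv$; (c) if $\deg(v) = 2$ with neighbors $u_1, u_2$, then $u_1 u_2 \in E(G)$, since otherwise $v$ and $u_1$ share no common neighbor to serve as the center of the claw forced by deleting $vu_1$. By (a), it suffices to prove that every claw-induced-saturated graph with no isolated vertex has at least $2n$ edges, which gives $e(G) \ge 2(n-1) = 2n - 2$ after removing the isolated vertex. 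To prove this stronger inequality, I would analyze degree-$2$ and degree-$3$ vertices via the addition/deletion conditions to show that each such vertex forces enough of its neighbors to have high degree, and apply a discharging argument that moves charge from high-degree vertices to their low-degree neighbors to conclude that the total charge is nonnegative, equivalently $\sum_v \deg(v) \ge 4n$.

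The main obstacle is the lower bound analysis at degree-$2$ and degree-$3$ vertices. Each such vertex produces several subcases depending on adjacencies among its neighbors and second neighbors, and at each edge incident to a low-degree vertex one must identify an induced paw or $K_4 - e$ substructure witnessing the deletion condition, while at each non-edge one must locate a specific pair of non-adjacent neighbors witnessing the addition condition. Ensuring these structural obligations combine to force sufficient excess degree at the neighbors of a low-degree vertex — enough to cover the deficit in the discharging bookkeeping — will be the technical heart of the proof.
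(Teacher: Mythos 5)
Your overall strategy matches the paper's — small \claw-induced-saturated building blocks combined by disjoint union for the upper bound, and a degree analysis for the lower bound — but in both halves the step that carries the actual content is deferred rather than carried out.

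For the upper bound, everything rests on the unexhibited blocks on $11$, $12$, $13$, $15$, $16$ vertices, and the way you propose to find them is doubtful. Attaching an extra vertex to a row or column of $K_3 \cartprod K_3$ gives that vertex a triangle as its neighborhood, so it has \emph{no} two non-adjacent neighbors — precisely the property your disjoint-union step demands of every vertex (such a graph does appear in the paper's Table~\ref{tab:claws} as a standalone \claw-induced-saturated graph on $10$ vertices, but it cannot be unioned with an isolated vertex or another block containing a clique-neighborhood vertex). The paper's actual blocks (Figure~\ref{fig:claw-i-s}) are not rook's-graph modifications: an $11$-vertex graph $J$ with $2\cdot 11+2$ edges checked by inspection, and $4$-regular graphs $K$ ($12$ vertices) and $L$ ($15$ vertices) in which every neighborhood induces $2K_2$, a condition Lemma~\ref{suff_claw} shows suffices for \claw-induced-saturation. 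Producing these graphs is the real work of the upper bound; only sizes $9,11,12,15$ are needed, since $n\equiv 1 \pmod 3$ is handled by adjoining one isolated vertex. Your "sum of block sizes" bookkeeping also needs the caveat that at most one isolate-augmented block may be used, though for $n\ge 18$ this never bites.

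For the lower bound, items (a)--(c) are correct and the reduction to isolate-free graphs is sound, but the theorem's bound of $2n-2$ does not require your target $\sum_v \deg(v)\ge 4n$, and the discharging argument that would deliver it is entirely unexecuted — you explicitly flag it as the "technical heart." The paper instead proves a global count (Lemma~\ref{lem:sis_clawdeg}): no degree-one vertices, at most one vertex of degree two, and at most two vertices of degree three, after which the degree-sum formula immediately gives $e(G)\ge \frac{1}{2}(2+3+3+4(n-3)) = 2n-2$. The key observations are short: a degree-two or degree-three vertex must have a clique neighborhood (otherwise adding the missing edge in its neighborhood forces a claw already present in $G$), so two degree-two vertices, or two nonadjacent degree-three vertices, admit an edge addition creating no claw; hence all degree-three vertices lie in a single $K_4$, and deleting edges of that $K_4$ forces all but two of its vertices to have degree at least four. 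As written, your proposal asserts that the low-degree analysis "will be" done but supplies neither these counts nor a working discharging scheme, so the lower bound is not established.
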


In order to prove Theorem \ref{thm:claw-sis}, we first prove a series of lemmas.
}{
\begin{thm}\label{thm:claw-sis}
The following bounds hold for $n\geq9$, $n\neq14,17$:
\begin{center}
\begin{tabular}{l c l}
\ $\sis{n}{\claw}=2n$ & if & $n\equiv0\mod3$\\
\ $\sis{n}{\claw}=2n-2$ & if & $n\equiv1\mod3$\\
\ $2n\leq\sis{n}{\claw}\leq2n+2$ & if & $n\equiv2\mod3$
\end{tabular}
\end{center}
\end{thm}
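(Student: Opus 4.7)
The plan is to prove each statement by constructing a graph realizing the claimed upper bound and then verifying a matching lower bound via structural degree analysis.

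For the upper bound, the key building block is the line graph $L(G)$ of a $3$-regular triangle-free graph $G$ on $2m$ vertices; such a $G$ exists for all $m \geq 3$ (taking $K_{3,3}$, $Q_3$, the Petersen graph, and standard cubic bipartite graphs for larger $m$). Then $L(G)$ has $n = 3m$ vertices, is $4$-regular with exactly $2n$ edges, is claw-free (as a line graph), and every neighborhood induces $K_2 + K_2$ (one $K_2$ from each endpoint of the corresponding edge of $G$). I will verify claw-induced-saturation in two pieces: removing any edge $xy$ creates a claw centered at the unique common neighbor $z$, using the other triangle at $z$ to supply the third leaf; adding any non-edge $xy$ creates a claw because triangle-freeness of $G$ forces $y$ to be non-adjacent in $L(G)$ to at least one representative of each $K_2$ in $N_{L(G)}(x)$, yielding three independent leaves. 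For $n \equiv 1 \pmod 3$, I will take $L(G)$ on $n-1$ vertices together with one isolated vertex, for a total of $2n - 2$ edges; the isolated vertex satisfies the required condition because each $v \in V(L(G))$ has two non-adjacent neighbors. For $n \equiv 2 \pmod 3$, I will attach a small two-vertex gadget to $L(G)$ contributing six new edges, giving $2n + 2$; the exclusions $n = 14$ and $n = 17$ correspond to the cuboctahedron and $L(\text{Petersen})$, respectively, whose structure is too tight for the gadget to be inserted cleanly.

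For the lower bound, I will prove three structural lemmas about any claw-induced-saturated graph $G$ on $n$ vertices. First, every edge lies in a triangle (since removing any edge must create a claw whose endpoints are forced to be two of its leaves with a common neighbor as center), so every non-isolated vertex has degree at least $2$. Second, $G$ has at most one isolated vertex, else adding an edge between two isolated vertices fails to create a claw. Third, for every non-isolated vertex $v$, the neighborhood $N(v)$ has independence number exactly $2$: claw-freeness gives $\alpha(N(v)) \leq 2$, while the requirement that adding any non-edge incident to $v$ creates a claw gives $\alpha(N(v)) \geq 2$. A careful case analysis on vertices of degree $2$ and $3$, using that each incident edge must still be extendable to a claw upon removal, will then show that only a bounded number of vertices of $G$ can have degree below $4$. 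Summing degrees yields $2|E(G)| \geq 4n - c$ for a small residue-dependent constant $c$. A triangle-counting parity argument will pin down $c$ exactly: at the extremal minimum each vertex lies in exactly two triangles meeting only at itself, so the total triangle count is $2n/3$, which is integral precisely when $n \equiv 0 \pmod 3$; the $n \equiv 1 \pmod 3$ case accommodates exactly one isolated vertex (replacing four units of degree-sum by zero), while $n \equiv 2 \pmod 3$ admits no such clean triangulation, explaining the gap $[2n,\, 2n+2]$.

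The main obstacle will be the lower bound, particularly the modular precision separating the three cases. The key observation is that if $G$ contains an isolated vertex $v$, then $G - v$ is itself claw-induced-saturated on $n - 1$ vertices and moreover has every neighborhood of independence number exactly $2$; iterating this with the triangle-counting constraint forces the correct congruence conditions for when the extra $-2$ saving is achievable. The degree-$2$ and degree-$3$ case analysis will be delicate because the saturation condition must be verified edge-by-edge and imposes stringent constraints on the closed neighborhoods of low-degree vertices, so one must rule out scattered low-degree configurations rather than merely bounding the global minimum degree.
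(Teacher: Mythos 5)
Your upper-bound construction for $n\equiv 0,1\pmod 3$ is sound and in fact generalizes the paper's: line graphs of cubic triangle-free graphs have every neighborhood inducing $2K_2$ (the paper's Lemma~\ref{suff_claw} then gives saturation directly, and indeed $K_3\cartprod K_3=L(K_{3,3})$ and the paper's $15$-vertex example is $L$ of the Petersen graph). But the $n\equiv 2\pmod 3$ case rests on an unspecified ``two-vertex gadget contributing six new edges,'' and this is a genuine gap: you never exhibit the gadget, and the stated reason for excluding $n=14,17$ does not cohere --- if the gadget attached to an arbitrary host $L(G)$, those values would be covered (contradicting the theorem's exclusions), and if it does not, you owe an argument for which hosts admit it. The paper instead uses a single sporadic $11$-vertex graph $J$ with $2\cdot 11+2$ edges as a separate component alongside a $0\bmod 3$ construction on $n-11$ vertices; the exclusions $n=14,17$ arise precisely because $n-11\in\{3,6\}$ is too small, not because of any rigidity of the cuboctahedron or $L(\text{Petersen})$.

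The lower bound has a more serious gap. Your parity argument assumes that at the extremum ``each vertex lies in exactly two triangles,'' i.e.\ that every neighborhood induces $2K_2$. This is false in general: the paper must prove (Lemma~\ref{lem:clawnbrs}) that in a $4$-regular \claw-induced-saturated graph each neighborhood induces $2K_2$ \emph{or} $P_4$, and vertices of the second kind lie in three triangles; only after showing the $P_4$-type vertices induce disjoint triangles (Lemma~\ref{lem:bluetri}) and counting edge--triangle incidences (Lemma~\ref{lem:adj}) does one get $3t=2n+|B(G)|$ with $3\mid |B(G)|$, hence $3\mid n$. More critically, your outline never confronts the extremal degree sequences. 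Once you know a $4$-regular example is impossible for $n-1\not\equiv 0\pmod 3$, the degree-sum bound only yields $e(G)\ge 2n-1$, with equality exactly when the degree sequence is $(5,5,4,\ldots,4,0)$ or $(6,4,\ldots,4,0)$. Ruling these out is the bulk of the paper's lower-bound work (Propositions~\ref{554} and~\ref{644}, each a multi-case structural analysis), and nothing in your proposal addresses it; ``admits no such clean triangulation'' is not a substitute. Without this step you cannot improve $2n-2$ (or $2n-1$) to $2n$ in the $n\equiv 0,2\pmod 3$ cases.
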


In order to prove Theorem~\ref{thm:claw-sis}, we first prove a series of lemmas that will aid in producing the lower bounds of the statement.
Then we construct families of \claw-induced-saturated graphs that exhibit the upper bounds of Theorem~\ref{thm:claw-sis}.

The following lemma shows that \claw-induced-saturated graphs have few vertices of low degree.
}

\begin{lem}\label{lem:sis_clawdeg}\label{deg2}
Let $G$ be a \claw-induced-saturated graph. Then $G$ has
\begin{enumerate}
\item at most one isolated vertex,
\item no vertices of degree one,
\item at most one vertex of degree two, and
\item at most two vertices of degree three.
\end{enumerate}
Furthermore, if $G$ has an isolated vertex $v$, then $\delta(G-v)\geq4$.
\iftoggle{sub}{}{
Additionally, if $G$ has a vertex of degree three, then $G$ does not have a vertex of degree two. 
If $G$ has two vertices of degree three or a vertex of degree two, then $G$ has a vertex of degree at least five.
}
\end{lem}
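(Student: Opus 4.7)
The plan is to exploit the principle that any new induced claw in $G+e$ (or $G-e$) must use $e$ as an edge (center--leaf) or non-edge (leaf--leaf) of that claw, which pins down local structure at every low-degree vertex. Parts (1) and (2) follow quickly. If $u,v$ are both isolated in $G$, then in $G+uv$ each has only one neighbor, so neither can serve as the degree-$3$ center of a new claw using the edge $uv$. For (2), if $v$ has degree one with neighbor $u$, then $G-uv$ isolates $v$, and the only possible new claw would need a center $c \in N_G(u)\cap N_G(v)\setminus\{u,v\}=\emptyset$.

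For (3), I first establish that a degree-$2$ vertex $u$ with neighbors $u_1,u_2$ must have $u_1u_2\in E(G)$: otherwise deleting $uu_1$ admits no valid center, since $\{u_2\}\cap N(u_1)=\emptyset$. Now given two degree-$2$ vertices $u,v$: if $uv\in E(G)$, the triangle condition at each forces a shared third vertex $z$ with $N(u)=\{v,z\}$ and $N(v)=\{u,z\}$, and then deleting $uz$ has the only candidate center $v$ but no third leaf in $N(v)\setminus\{u,z\}=\emptyset$; if $uv\notin E(G)$, adding $uv$ demands a claw centered at $u$ or $v$, yet the neighborhood of each candidate is $\{u_1,u_2,v\}$ (or $\{v_1,v_2,u\}$), which contains a triangle edge and so no independent triple.

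For (4), I would mimic the scheme with considerably more cases. A degree-$3$ vertex $u$ has $G[N(u)]\in\{K_3,K_3-e\}$: claw-freeness forbids an independent triple, while deletion-saturation at $u$ forces each $u_i\in N(u)$ to be adjacent to another vertex of $N(u)$. Assuming three degree-$3$ vertices $u,v,w$, I would split on which pairs among them are adjacent, on whether each of $G[N(u)],G[N(v)],G[N(w)]$ is $K_3$ or $K_3-e$, and on how their neighborhoods overlap; in every subcase, some edge modification is to be pinpointed whose claw requirement cannot be met, either because no valid center sits in the relevant common neighborhood, or because no independent triple surrounds any candidate center. This combinatorial case analysis is the main obstacle of the lemma, since the many overlapping substructures (diamonds, shared $P_3$'s, and $K_4$-substructures) each demand a separate verification.

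For the ``Furthermore,'' suppose $v$ is isolated and $u\in V(G-v)$ has degree at most $3$. Since $v$ has degree one in $G+uv$, it must be a leaf of the new claw, forcing $u$ to be the center, which in turn demands an independent pair in $N_G(u)$. This single requirement rules out $\deg(u)\le 1$, $\deg(u)=2$ (whose neighborhood is a clique by the triangle condition above), and $\deg(u)=3$ with $G[N(u)]=K_3$. The remaining case $\deg(u)=3$ with $G[N(u)]=K_3-e$---say $u_1u_2\notin E(G)$ and $u_1u_3,u_2u_3\in E(G)$---is the delicate one: combining the independent-pair demand forced on $u_1$ and $u_2$ (from adding $vu_1$ and $vu_2$) with the deletion-saturation demand on $uu_3$ and on $u_1u_3,u_2u_3$ (each requiring specific external common neighbors), one overconstrains the neighborhoods until some further edge modification cannot produce the required claw, yielding the desired contradiction.
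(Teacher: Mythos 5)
Parts (1), (2), and (3) of your argument are correct and essentially identical to the paper's: two isolates cannot both be centers, a degree-one vertex cannot survive deletion-saturation (every edge must lie in a triangle), and the two-subcase analysis for a pair of degree-two vertices (adjacent versus non-adjacent) matches the paper's proof exactly.

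However, for part (4) and for the degree-three portion of the ``furthermore'' claim, you have only described a plan (``split on which pairs are adjacent\dots in every subcase, some edge modification is to be pinpointed'') rather than carried one out, and this is precisely where the content of the lemma lives. The missing idea is that the case $G[N(u)]=K_3-e$ for a degree-three vertex $u$ is impossible outright, which collapses the case explosion you anticipate. Concretely, if $N(u)=\{u_1,u_2,u_3\}$ with $u_1u_2,u_2u_3\in E(G)$ but $u_1u_3\notin E(G)$, then adding $u_1u_3$ must create an induced claw centered at, say, $u_1$, with leaves $u_3,x,y$ where $x,y$ are nonadjacent to each other and to $u_3$; since $u_2u_3\in E(G)$ and $uu_3\in E(G)$, neither $x$ nor $y$ can be $u_2$ or $u$, so $x,y\notin N[u]$ and $\{u_1,u,x,y\}$ is already an induced claw in $G$ --- a contradiction. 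Hence every degree-three vertex has clique neighborhood and lies in a $K_4$. It follows that any two degree-three vertices must be adjacent (otherwise adding the edge between them creates no claw, as each candidate center has a clique neighborhood), so all degree-three vertices sit inside a single $K_4$, namely $N[u]$; deleting $uu_1$ and then $uu_3$ forces at least two of $u_1,u_2,u_3$ to have a neighbor outside $N[u]$, hence degree at least four, giving at most two degree-three vertices. This same fact disposes of your ``delicate case'' in the furthermore: a degree-three vertex with neighborhood $K_3-e$ simply does not exist, so an isolate together with a degree-three vertex is excluded because the latter's clique neighborhood cannot supply two independent leaves. As written, your proposal leaves the central claims unestablished.
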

\begin{proof}
Let $G$ be a \claw-induced-saturated graph.  Observe that if we had two isolated vertices, then adding the edge between them would not yield a $K_{1,3}$. Also, any edge of $G$ lies in a triangle, so there are no vertices of degree one. 

Suppose that $u$ and $v$ are vertices of degree two. Since every edge lies in a triangle the neighbors of $u$ are adjacent, as are the neighbors of $v$.  Thus, if $u$ and $v$ are not adjacent, adding the edge $uv$ does not create an induced \claw.  If $u$ and $v$ are adjacent, then $N[u] = N[v] = \{u,v,w\}$ for some $w$.  However, removing $uw$ does not create an induced \claw as $v$ would have to have been its center.  So $G$ has at most one vertex of degree two.

To prove $(4),$ suppose $u$ is a vertex of degree three with neighbors $u_1,u_2,u_3$.  Since every edge is in a triangle, we may assume that $u_1u_2, u_2u_3 \in E(G)$.  
\textit{Case 1:} $u_1u_3 \notin E(G)$. Then adding $u_1u_3$ creates an induced \claw centered at either $u_1$ or $u_3$; say $u_1$.  Then $u_1$ has two nonadjacent neighbors $x$ and $y$ that are distinct from $u_2$ and $u_3$.  However, $\{u,u_1,x,y\}$ induces a \claw in $G$, a contradiction.  \textit{Case 2:} $u_1u_3 \in E(G)$. In particular, every vertex of degree three in $G$ is contained in a $K_4$.  Let $v$ be another vertex of degree three.  By the above, $N[v]$ induces $K_4$.  If $uv \notin E(G)$, then adding $uv$ does not create an induced \claw.  Thus, $u$ and $v$ are adjacent, and consequently the only vertices of degree three are contained in $N[u]$.  

If we remove $uu_1$, then an induced \claw exists, centered at either $u_2$ or $u_3$.   So at least one of them has degree at least four, say $u_3$.  Similarly, removing $uu_3$ creates an induced \claw centered at either $u_1$ or $u_2$ so that at least one of them has degree at least four.  In any case, at most two vertices in $N[u]$, and as a result in $G$, have degree three. Thus, $(4)$ holds.

If $G$ has an isolate, $u$, and another vertex $v$ with $\deg(v) \le 2$, then adding $uv$ cannot create an induced \claw unless $\deg(v) = 2$.  In this case, the neighbors of $v$ cannot be adjacent, however every edge of $G$ must be in a triangle, a contradiction.

\iftoggle{sub}{}{
Suppose $u$ and $v$ are vertices with $\deg(u) = 2$ and $\deg(v) = 3$.  By previous arguments, the neighbors of $u$ form a clique, as do the neighbors of $v$.  Thus, if $uv \notin E(G)$, adding $uv$ does not create an induced $\claw$.  So $uv \in E(G)$, and in particular, $u$ is in the $K_4$ induced by $N[v]$.  However, $\deg(u) = 2$, a contradiction.

Now, suppose $u$ and $v$ are vertices with $\deg(u)=\deg(v)=3$.  By the above, they must be contained in the same $K_4$, so let $u,v,x,y$ denote the vertices of this $K_4$.
If we delete $xy$, then $x$ and $y$ are the leaves of a \claw, but this \claw is not centered at $u$ or $v$, so $x$ and $y$ have a common neighbor $z \not\in \{u,v\}$. 
If we delete $xz$, the resulting \claw is centered at a common neighbor of $x$ and $z$. 
If that common neighbor is not $y$, then $\deg(x) \geq 5$, and if it is, then $\deg(y) \ge 5$. 

Similarly, suppose $\deg(v)=2$, with $N(v)=\{x,y\}$. 
Since every edge is in a triangle, $xy \in E(G)$. 
If we consider deleting the edge $xy$, we note that the \claw formed does not have center $v$, so $x$ and $y$ share another neighbor $z$, and $z$ has a neighbor $z'$ nonadjacent to both $x$ and $y$. 
Consider deleting the edge $vx$. The \claw formed must be centered at $y$, so $y$ has a neighbor nonadjacent to  $v$ or $x$. 
Then this neighbor $y'$ is not any of the vertices already named. Similarly, $x$ has a neighbor $x' \not\in \{v,x,y,y',z,z'\}$. 
Then $\{x',y'\} \subseteq N(z)$ else $G[x,x',v,z]$ or $G[y,y',v,z]$ is a \claw.
Thus, $\deg(z)\geq 5$.}
\end{proof}

\iftoggle{sub}{}{
\begin{cor}\label{cor:sis_clawmin}
Any graph that is \claw-induced-saturated (on $n \geq 9$ vertices) has at least $2n-2$ edges.
That is, $\sis{n}{\claw} \geq 2n-2$ for $n \geq 9$. 
Furthermore, if $G$ is a \claw-induced-saturated graph that does not have an isolated vertex, then $e(G)\geq 2n$.
\end{cor}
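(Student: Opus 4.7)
The plan is to apply the handshake identity $2e(G) = \sum_v \deg(v)$ together with the degree restrictions provided by Lemma~\ref{lem:sis_clawdeg}, splitting on whether $G$ has an isolated vertex.

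If $G$ has an isolated vertex $v$, the furthermore clause of Lemma~\ref{lem:sis_clawdeg} guarantees $\delta(G-v) \geq 4$. Since the $n-1$ non-isolated vertices have the same degree in $G$ as in $G-v$, summing their degrees gives $2e(G) \geq 4(n-1)$ and hence $e(G) \geq 2n-2$, which already suffices for the main inequality in this case.

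If $G$ has no isolated vertex, I would show the stronger bound $e(G) \geq 2n$ by a short case analysis on the low-degree structure. By Lemma~\ref{lem:sis_clawdeg}, $G$ has no vertex of degree $0$ or $1$, at most one vertex of degree $2$, and at most two vertices of degree $3$, with all remaining vertices of degree at least $4$. The extra parts of Lemma~\ref{lem:sis_clawdeg} handle the slack: a degree-$2$ vertex forbids any degree-$3$ vertex, and the presence of either a degree-$2$ vertex or a second degree-$3$ vertex forces some vertex of degree at least $5$. In each admissible configuration (no low-degree vertex; exactly one degree-$3$ vertex alone; one degree-$2$ vertex with a degree-$\geq 5$ vertex; two degree-$3$ vertices with a degree-$\geq 5$ vertex), a direct sum gives $2e(G) \geq 4n-1$.

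The finishing move, and the one place where the argument has any content beyond bookkeeping, is a parity observation: since $2e(G)$ is always even, the bound $2e(G) \geq 4n-1$ automatically sharpens to $2e(G) \geq 4n$, yielding $e(G) \geq 2n$. I expect the main obstacle is just verifying that the extra clauses of Lemma~\ref{lem:sis_clawdeg} trigger in precisely the configurations where the base inequalities would otherwise only produce $2e(G) \geq 4n-2$; once those clauses are invoked, the parity step closes every case uniformly.
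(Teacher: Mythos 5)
Your proof is correct and follows essentially the same route as the paper, which simply cites the degree-sum formula together with Lemma~\ref{lem:sis_clawdeg}; you have just made the case analysis and the parity (ceiling) step explicit. The bookkeeping checks out: every admissible degree configuration without an isolated vertex forces $2e(G)\geq 4n-1$, and evenness of the degree sum then gives $e(G)\geq 2n$.
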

\begin{proof}
Apply the degree-sum formula and Lemma~\ref{lem:sis_clawdeg}.
\end{proof}


As indicated in Corollary~\ref{cor:sis_clawmin}, if a graph on $n$ vertices obtaining the minumum number of edges among \claw-induced-saturated graphs exists, then it is four-regular except for an isolated vertex.
We provide the following structural results to show such a graph only exists if $n\equiv1\mod 3$.

\begin{lem}\label{lem:clawnbrs}
Suppose $G$ is a \claw-induced-saturated graph, and for some $v \in V(G)$, every vertex in $N[v]$ has degree precisely 4.
Then $G[N(v)] \in \{2K_2,P_4\}$.
\end{lem}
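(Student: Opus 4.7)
The plan is to show that $H := G[N(v)]$ must be $P_4$ or $2K_2$ by eliminating the other candidates. Since $\deg(v) = 4$, the graph $H$ has exactly four vertices, and the fact that $G$ is \claw-free forces $\alpha(H) \le 2$. Up to isomorphism the $4$-vertex graphs with $\alpha \le 2$ are $K_4$, $K_4 - e$, $C_4$, the paw, $K_3 \cup K_1$, $P_4$, and $2K_2$, so I only need to exclude the first five.

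The main tool is a pair of consequences of \claw-induced-saturation. For each edge $uw \in E(G)$, the induced \claw created in $G - uw$ must have $u$ and $w$ as two of its leaves: the new non-edge $uw$ must be used by the claw (or else the claw would exist in $G$), and in a $K_{1,3}$ every non-edge is leaf--leaf. Hence there exist $x \in N(u) \cap N(w)$ and $y \in N(x)$ with $y$ non-adjacent to both $u$ and $w$. Symmetrically, for each non-edge $uw$, adding $uw$ forces an induced claw centered at either $u$ or $w$, so one of them has two non-adjacent neighbors in $G$ that are also non-adjacent to the other endpoint. The degree-$4$ hypothesis contributes the sharp constraint that each $u \in N(v)$ has exactly $3 - \deg_H(u)$ neighbors outside $N[v]$; in particular, any vertex of degree $3$ in $H$ has no outside neighbor at all.

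The five cases will be eliminated as follows. If $H = K_4$, then $N[v]$ is an isolated $K_5$, and for any edge $vu$ every common neighbor $x$ of $v, u$ satisfies $N(x) \subseteq N[v]$, so no $y$ exists. If $H = K_4 - e$ with non-edge $ab$, then the degree-$3$ vertices $c, d$ have no outside neighbor, and inspecting the two possibilities $x \in N(a) \cap N(c) = \{v, d\}$ shows that deleting $ac$ admits no valid $y$. If $H = C_4$ with diagonal $ac$, every non-adjacent pair in $N(a)$ contains a vertex adjacent to $c$ (and symmetrically for $N(c)$), so the addition tool fails at $ac$. If $H = K_3 \cup K_1$ with $d$ isolated in $H$, then $v$ and $d$ have no common neighbor in $G$, so the deletion tool fails at $vd$. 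Finally, if $H$ is the paw with $c$ the degree-$3$ vertex and $d$ the pendant, then $c$ has no outside neighbor and $d$ is non-adjacent in $H$ to the other neighbors of $v$, forcing $N(c) \cap N(d) = \{v\}$; but no vertex of $N(v) \setminus \{c, d\}$ is non-adjacent to $c$, so the deletion tool fails at $cd$.

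The main obstacle will be the bookkeeping in the $K_4 - e$ and $C_4$ cases, where the single outside neighbor allowed to each degree-$2$ vertex of $H$ is a natural candidate witness and must be explicitly ruled out using the absence of outside neighbors at the degree-$3$ vertices and the specific adjacencies of $H$ to the other endpoint of the edge in question.
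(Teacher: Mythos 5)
Your proof is correct, and all five exclusions check out: the ``outside-degree'' bookkeeping ($3-\deg_{G[N(v)]}(u)$ neighbors outside $N[v]$ for each $u\in N(v)$) is applied accurately in each case, and the deletion/addition tools are stated and used properly. The route is genuinely different from the paper's, though, in two respects. First, the paper does not enumerate isomorphism types; it proves $\Delta(G[N(v)])\le 2$ in a single unified argument (a vertex with three neighbors in $N(v)$ leads to a contradiction by deleting one edge), which disposes of your $K_4$, $K_4-e$, and paw cases at once, and then invokes the every-edge-in-a-triangle property to get $\delta(G[N(v)])\ge 1$, killing $K_3\cup K_1$ and leaving only $2K_2$, $P_4$, $C_4$. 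Second, and more substantively, the paper eliminates $C_4$ by \emph{deleting} two of its edges in succession, which forces two external witnesses $y,y'$ and ultimately an induced \claw already present in $G$; you instead \emph{add} the diagonal $ac$ and observe that neither endpoint can serve as the center because three of its four neighbors are adjacent to the other endpoint. Your $C_4$ argument is shorter and arguably cleaner than the paper's, at the cost of a longer overall case list; the paper's degree-based reduction buys economy in the number of cases but requires a more delicate argument inside the $\Delta\ge 3$ case (tracking which of $x_1,x_2,x_3$ can center the claw after deleting $ux_1$). Both proofs lean on exactly the same two saturation consequences, so the difference is one of organization and of which perturbation (addition versus deletion) is exploited where.
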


\begin{proof} 

Since we are assuming every vertex in $N[v]$ has degree 4, then we can let $N(v)=\{u,x_1,x_2,x_3\}$. Next, we show that $\Delta(G[N(v)])\leq 2$. Suppose to the contrary that some vertex, say $u\in N(v)$, has three neighbors within $N(v)$; hence, $N(u)\cap N(v)=\{x_1, x_2,x_3\}$. By deleting $ux_1$, we see that $u$ and $x_1$ have a common neighbor besides $v$. 
Using the symmetry of $x_1,x_2$, and $x_3$, without loss of generality $x_1x_2,x_2x_3 \in E(G)$. 
Now $N(x_2)=\{u,v,x_1,x_3\}$, because $\deg(x_2)=4$.
Consider deleting $ux_1$. 
The common neighbors of $u$ and $x_1$ are $v,x_2,$ and maybe $x_3$. 
Neither $v$ nor $x_2$ can be the center of a \claw since all of their neighbors are adjacent to $u$ or $x_1$.
Hence $x_3$ must be the center of the induced \claw, so $x_1x_3 \in E(G)$.  But then $N(x_3)=\{v,u,x_1,x_2\}$ so the \claw supposedly centered at $x_3$ has no third leaf.

This shows that  $\Delta(G[N(v)]) \leq 2$. 
Because every edge is in a triangle, if $\Delta(G[N(v)]) < 2$, then $G[N(v)]=2K_2$, so suppose $\Delta(G[N(v)])=2$. 
Then $G[N(v)]$ is either $C_4$ or $P_4$.

Suppose $x_1x_2x_3x_4=P_4 \subseteq G[N(v)]$. 
If $x_1x_4 \not\in E(G)$, then $G[N(v)]=P_4$, so suppose $x_1x_4 \in E(G)$. 
Deleting the edge $x_2x_3$ shows that $x_2$ and $x_3$ have a common neighbor $y \in V(G) \setminus N[v]$. Separately, consider deleting $x_3x_4$. The only possible common neighbors of $x_3$ and $x_4$ are $v$ and $y$. Because $x_1x_4 \in E(G),$  $v$ cannot be the center of the \claw created by deleting $x_3x_4$, so the center is $y$. Then the third leaf must be some vertex $y' \not\in N(x_3) \cup N(x_4)$. But we also know that $y' \not\in N(x_2)$, since $\deg(x_2)=4$, so $G[y,y',x_2,x_4]$ is an induced \claw, a contradiction.
\end{proof}

For the remainder of this section, we define $R(G):=\{v\in V(G):G[N(v)]=2K_2\}$ and $B(G):=\{v\in V(G):G[N(v)]=P_4\}$ for any graph $G$. 
Hence if $G$ is a four-regular \claw-induced-saturated graph, then $V(G)$ is partitioned into $R(G)$ and $B(G)$.  
We will call the vertices in $R(G)$ {\it red vertices} and those in $B(G)$ {\it blue vertices}.


\begin{lem}\label{lem:bluetri}
If $G$ is a 4-regular \claw-induced-saturated graph, then $B(G)$ induces $kK_3$ for some $k$.
\end{lem}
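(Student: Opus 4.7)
The plan is to show that for each $v \in B(G)$, the two ``middle'' vertices of the induced $P_4$ on $N(v)$ form a blue triangle with $v$, while the two ``endpoint'' vertices of that $P_4$ lie in $R(G)$. Granting this, the connected component of each blue vertex in $G[B(G)]$ is exactly a single triangle, yielding $G[B(G)] = kK_3$ for some $k$.

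Fix $v \in B(G)$ and write $N(v) = \{x_1, x_2, x_3, x_4\}$ with induced path $x_1 x_2 x_3 x_4$. The first step is to observe $x_2, x_3 \in B(G)$. Indeed, since $v$ is adjacent to both $x_1$ and $x_3$, it has degree $2$ in $G[N(x_2)]$. Lemma \ref{lem:clawnbrs} restricts $G[N(x_2)]$ to $\{2K_2, P_4\}$, and $2K_2$ has maximum degree $1$, so $G[N(x_2)] = P_4$ and $x_2 \in B(G)$; the identical argument applies to $x_3$. Consequently $\{v, x_2, x_3\}$ is a triangle all of whose vertices are blue.

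The main obstacle is showing $x_1, x_4 \in R(G)$, because otherwise $v$ could sit in additional blue triangles and $G[B(G)]$ would fail to split as disjoint triangles. I would prove this by contradiction using the \claw-induced-saturation of $G$ via the edge $v x_1$. If $x_1 \in B(G)$, then $G - v x_1$ contains an induced claw, and any such claw must use $v$ and $x_1$ as two of its (non-adjacent) leaves. The center then lies in $N(v) \cap N(x_1) \setminus \{v, x_1\}$, which equals $\{x_2\}$ since $x_3, x_4 \notin N(x_1)$ (they are nonadjacent to $x_1$ in the $P_4$ on $N(v)$). Writing $N(x_2) = \{v, x_1, x_3, w\}$, the third leaf must lie in $N(x_2) \setminus \{v, x_1\} = \{x_3, w\}$; the option $x_3$ fails because $x_3 \in N(v)$, so the third leaf must be $w$, and this requires $w \notin N(x_1)$ (note $w \notin N(v)$ automatically, else $w = x_4$, forcing the nonexistent edge $x_2 x_4$).

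The contradiction then comes from showing that the assumption $x_1 \in B(G)$ actually forces $w \in N(x_1)$: since $v$ has degree $1$ in $G[N(x_1)]$ (its only edge there being $v x_2$) and $G[N(x_1)] = P_4$, $v$ is an endpoint of this $P_4$ and $x_2$ is its adjacent interior vertex, so $x_2$ has a second neighbor inside $N(x_1)$, necessarily lying in $N(x_2) \setminus \{v, x_1, x_3\} = \{w\}$, giving $w \in N(x_1)$. This contradicts the necessity $w \notin N(x_1)$ above, so $x_1 \in R(G)$; symmetrically $x_4 \in R(G)$. Hence the only blue neighbors of $v$ are $x_2$ and $x_3$, and since $x_2 x_3 \in E(G)$, the component of $v$ in $G[B(G)]$ is exactly the triangle $\{v, x_2, x_3\}$. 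As $v \in B(G)$ was arbitrary, $G[B(G)]$ is a disjoint union of triangles.
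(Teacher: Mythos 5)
Your proof is correct and follows the same overall structure as the paper's: first show that the two interior vertices $x_2,x_3$ of the path $G[N(v)]$ lie in $B(G)$ (identical argument, via Lemma~\ref{lem:clawnbrs}), then show the two endpoints $x_1,x_4$ lie in $R(G)$, and conclude that each blue vertex together with its two blue neighbors forms a component of $G[B(G)]$ isomorphic to $K_3$. The only divergence is in the middle step: the paper identifies the fourth neighbor of $x_2$ (your $w$, its $y$) as a common neighbor of $x_2$ and $x_3$ obtained by deleting $x_2x_3$, proves $x_1\in B(G)$ if and only if $x_1y\in E(G)$, and derives a contradiction by \emph{adding} the edge $vy$; you reach the same contradiction by \emph{deleting} the edge $vx_1$ and tracking where the resulting induced \claw must sit. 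Both are legitimate uses of the saturation hypothesis, so this is a cosmetic rather than substantive difference.
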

\begin{proof}
Let $v \in B(G)$ so that $G[N(v)]$ is a path $x_1 x_2 x_3 x_4$.  Since $P_3\subseteq G[\{v,x_1,x_3\}]\subseteq G[N(x_2)]$ and $P_3\subseteq G[\{v,x_2,x_4\}]\subseteq G[N(x_3)]$, Lemma~\ref{lem:clawnbrs} implies that $x_2,x_3\in B(G)$.  
Furthermore, as deleting $x_2x_3$ creates an induced \claw, which cannot be centered at $v$, then $x_2$ and $x_3$ share another common neighbor, call it $y$. Since $N(x_2) = \{v,x_1,x_3,y\}$,  $x_1 \in B(G)$ iff $x_1$ and $y$ are neighbors.  So if $x_1y \in E(G)$, we consider adding $vy$ to $G$.  This creates an induced \claw, which must be centered at $y$.  However, since $G$ is 4-regular, $y$ has at most one neighbor outside of $\{x_2,x_3,x_4,v\}$ and cannot be the center of this induced \claw, a contradiction.  Thus, $x_1 \in R(G)$, and by symmetry, $x_4 \in R(G)$.  Repeating the above argument for $x_2$ instead of $v$ shows that $y \in R(G)$.  Hence, $\{x_2,x_3, v\}\subseteq B(G)$ but $N(\{x_2,x_3, v\})=\{x_1,x_4,y\}\subseteq R(G)$ and so $\{x_2,x_3, v\}$ induces a traingle of vertices in $B(G)$. 
\end{proof}

An example of a 4-regular \claw-induced-saturated graph, with $R(G),B(G)\neq\emptyset$ is shown in Figure~\ref{fig:claw pretty}.  Observe that $B(G)$ induces $8K_3$, which is in accordance with Lemma~\ref{lem:bluetri}.

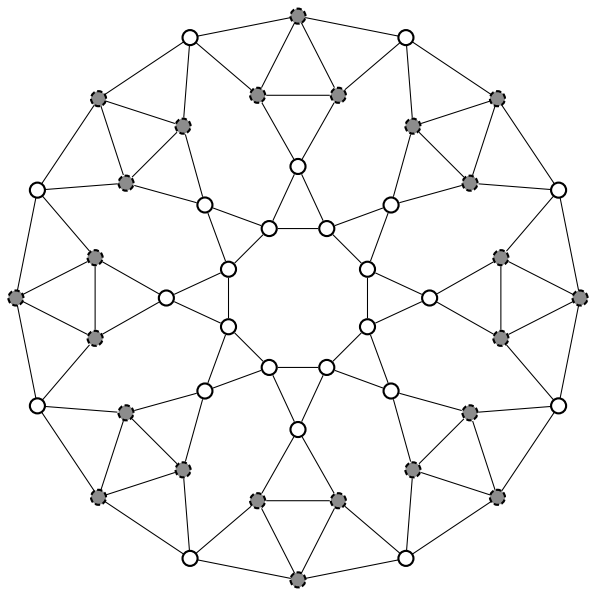
\begin{figure}[ht]
\begin{tikzpicture}
\foreach \x in {0,...,7}
{
\draw[red] (0,0) + (90-360/8*\x:1.75cm) node[whitevx] (A\x){};
\draw[red] (0,0) + (90-360/8*\x+360/16:1cm) node[whitevx] (B\x){};
}
\foreach \x in {0,...,15}
{
\draw[blue] (0,0) + (90-360/16*\x+360/16-360/32:2.75cm) node[grayvx] (C\x) {};
\draw (0,0) + (90-360/16*\x+360/16:3.75cm) node[inner sep=0] (D\x) {};
}
\foreach \x in {0,...,6}
{
\pgfmathparse{int(\x+1)}
\draw (B\x) -- (B\pgfmathresult);
\draw (B\x) -- (A\x) -- (B\pgfmathresult);
}
\draw (B0) -- (A7) -- (B7) -- (B0);
\foreach \x in {0,...,14}
{
\pgfmathparse{int(\x+1)}
\draw (D\x) -- (D\pgfmathresult);
\draw (D\x) -- (C\x) -- (D\pgfmathresult);
}
\draw (D15) -- (C15) -- (D0) -- (D15);
\foreach \x in {0,2,4,6,8,10,12,14}
{
\pgfmathparse{int(\x+1)}
\draw (C\x) -- (C\pgfmathresult);
\draw[red] (D\x)node[whitevx] {};
\draw[blue] (D\pgfmathresult)node[grayvx] {};
}
\foreach \x in {0,...,7}
{
\pgfmathparse{int(2*\x)}
\draw (A\x) -- (C\pgfmathresult);
\pgfmathparse{int(2*\x+1)}
\draw (A\x) -- (C\pgfmathresult);
}
\end{tikzpicture}\caption{A 4-regular \claw-induced-saturated graph. Vertices in $R(G)$ are white, and vertices in $B(G)$ are gray.}\label{fig:claw pretty}
\end{figure}

\begin{lem}\label{lem:adj}
Let $G$ be a 4-regular \claw-induced-saturated graph.  
Every edge of $G$ is in either one or two triangles, and there are $|B(G)|$ edges that are in two triangles.
\end{lem}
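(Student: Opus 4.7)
The key observation I would begin with is the identity that, for any edge $uv \in E(G)$, the number of triangles containing $uv$ equals $|N(u) \cap N(v)|$, which in turn equals $\deg_{G[N(u)]}(v) = \deg_{G[N(v)]}(u)$. Since $G$ is 4-regular, Lemma~\ref{lem:clawnbrs} applies to every vertex, so $G[N(w)] \in \{2K_2, P_4\}$ for each $w \in V(G)$. In both $2K_2$ and $P_4$, every vertex has degree at least $1$ and at most $2$. Combined with the earlier observation (Lemma~\ref{triangle}) that every edge is in a triangle, this immediately yields that every edge of $G$ lies in either one or two triangles.

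Next I would characterize which edges lie in two triangles. If $uv$ lies in two triangles, then $\deg_{G[N(u)]}(v) = 2$, which forces $G[N(u)] = P_4$ with $v$ as an internal vertex of this path; in particular $u \in B(G)$. By the symmetric argument applied at $v$, we also obtain $v \in B(G)$. Conversely, suppose $u \in B(G)$ and write $G[N(u)]$ as the path $x_1 x_2 x_3 x_4$. The two internal vertices $x_2, x_3$ have degree $2$ in $G[N(u)]$, so the edges $ux_2$ and $ux_3$ lie in two triangles each, while $ux_1$ and $ux_4$ lie in exactly one triangle each. By Lemma~\ref{lem:bluetri}, the internal vertices $x_2$ and $x_3$ are themselves in $B(G)$ (indeed $\{u,x_2,x_3\}$ forms one of the blue triangles), so the two edges at $u$ lying in two triangles are precisely its two blue-blue incident edges.

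Finally I would perform a standard double count. Each vertex $u \in B(G)$ contributes exactly two incident edges that lie in two triangles, so summing over all blue vertices gives $2|B(G)|$ incidences. Each such edge has both endpoints in $B(G)$ and is therefore counted twice, yielding exactly $|B(G)|$ edges that lie in two triangles. This completes the count.

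I do not anticipate any serious obstacle: the main content is already packaged in Lemmas~\ref{lem:clawnbrs} and \ref{lem:bluetri}. The only subtlety is being careful that when checking ``$uv$ in two triangles implies both endpoints blue,'' one invokes the statement at both endpoints, which is immediate from the symmetric role of $u$ and $v$ in the identity $\deg_{G[N(u)]}(v) = \deg_{G[N(v)]}(u)$.
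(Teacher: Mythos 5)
Your proof is correct, and it takes a noticeably different route to the count than the paper does, even though both halves rest on the same key input, Lemma~\ref{lem:clawnbrs}. The first assertion (each edge lies in one or two triangles, via $|N(u)\cap N(v)|=\deg_{G[N(u)]}(v)\in\{1,2\}$ for $G[N(u)]\in\{2K_2,P_4\}$) is essentially identical to the paper's. For the count of doubly-covered edges, the paper runs a global double count: it labels each edge $xy$ by every apex $z$ of a triangle $xyz$, so that $|E(G)|+b=\sum_{z}e(G[N(z)])=2|R(G)|+3|B(G)|$, and then uses $|E(G)|=2n$ from $4$-regularity to get $b=|B(G)|$. You instead localize the doubly-covered edges: such an edge forces each endpoint's neighborhood to be a $P_4$ with the other endpoint internal, so both endpoints are blue, and conversely each blue vertex is incident to exactly two such edges (the edges to the two internal vertices of its neighborhood path); halving $2|B(G)|$ finishes. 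Your version buys a sharper structural fact --- the edges in two triangles are precisely the edges of the blue triangles of Lemma~\ref{lem:bluetri} --- while the paper's version is a quicker aggregate identity that never needs to locate these edges. One small remark: your appeal to Lemma~\ref{lem:bluetri} is decorative rather than necessary; the double count only needs that every doubly-covered edge has both endpoints in $B(G)$ and that every blue vertex has exactly two incident doubly-covered edges, both of which you derive directly from Lemma~\ref{lem:clawnbrs}.
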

\begin{proof}
Recall that every edge in a \claw-induced-saturated graph is in at least one triangle.
Suppose there exists $xy \in E(G)$ where $x$ and $y$ have three common neighbors $u,v,w$.
Then $G[N(x)]$ cannot be in $\{2K_2, P_4\}$, which contradicts Lemma \ref{lem:clawnbrs}. Hence, each edge is in at most two triangles.

Let $b$ be the number of edges that are in two triangles. 
Label edge $xy$ with vertex $z$ if $xyz$ is a triangle, and allow for multiple labels.  
Thus $b$ edges have two labels and hence
$$|E(G)|+b=\sum_{z\in V(G)}|\{e\in E(G): e \text{~has~label~} z\}|.$$
Since each red vertex gives its label to two triangles and each blue vertex gives its label to three triangles, we have $\sum_{z\in V(G)}|\{e\in E(G): e \text{~has~label~} z\}|=2|R(G)|+3|B(G)|$. Thus, since $G$ is 4-regular,

\begin{align*}
2n+b&=|E(G)|+b\\
&= \sum_{z\in V(G)}|\{e\in E(G): e \text{~has~label~} z\}|\\
&=2|R(G)|+3|B(G)|\\
&= 2(n-|B(G)|)+3|B(G)|\\
&=2n+|B(G)|\\
\end{align*}

Therefore, there are precisely $|B(G)|$ edges that are in two triangles. 
\end{proof}


\begin{prop}\label{3|n}
If $G$ is a 4-regular, \claw-induced-saturated graph on $n$ vertices, then $n\equiv0\mod3$.
\end{prop}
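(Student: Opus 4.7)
The plan is to count triangles in $G$ in two different ways and exploit the divisibility that results.

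First, I would use Lemma~\ref{lem:bluetri}: the set $B(G)$ induces $kK_3$ for some nonnegative integer $k$, so $|B(G)| = 3k$. This gives one factor of $3$ that I can use to get divisibility of $n$ by $3$.

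Next, let $t$ denote the total number of triangles in $G$. Since every triangle contributes $3$ to the sum $\sum_{e \in E(G)} |\{\text{triangles containing } e\}|$, this sum equals $3t$. On the other hand, by Lemma~\ref{lem:adj} every edge lies in either one or two triangles, and exactly $|B(G)|$ edges lie in two triangles. Because $G$ is $4$-regular on $n$ vertices, $|E(G)| = 2n$, so the sum can also be computed as
\[
\bigl(2n - |B(G)|\bigr)\cdot 1 + |B(G)|\cdot 2 \;=\; 2n + |B(G)|.
\]
Equating the two expressions yields $3t = 2n + |B(G)| = 2n + 3k$, so $3$ divides $2n$ and hence $n \equiv 0 \pmod 3$.

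There is no serious obstacle here since the two preceding lemmas do all the structural work; the remaining step is just an edge--triangle double count. The only thing to be careful about is making sure that Lemma~\ref{lem:adj} really says the count of double-triangle edges is \emph{exactly} $|B(G)|$ (it does), so that the congruence is not merely an inequality.
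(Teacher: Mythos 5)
Your proof is correct and follows essentially the same route as the paper: both use Lemma~\ref{lem:bluetri} to get $3 \mid |B(G)|$ and Lemma~\ref{lem:adj} to run the edge--triangle double count $3t = (2n - |B(G)|) + 2|B(G)| = 2n + |B(G)|$, concluding $3 \mid n$. No gaps.
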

\begin{proof}
Let $b=|B(G)|$.
By Lemma~\ref{lem:bluetri}, 3 divides $b$.
By Lemma~\ref{lem:adj}, $2n-b$ edges are in precisely one triangle, and $b$ edges are in precisely two triangles. 
If $t$ is the number of triangles in $G$, then
$3t=(2n-b)+(2b)=2n+b$. Since 3 divides $b$, we know 3 divides $2n$, and so 3 divides $n$.
\end{proof}

The previous lemmas will be used in the proof of Theorem~\ref{thm:claw-sis} to obtain a lower bound $\sis{n}{\claw}\geq2n-1$ for $n \equiv 0 \mod 3$. 
The next two propositions show that certain degree sequences do not have a \claw-induced-saturated realization.
This allows us to increase the lower bound of $\sis{n}{\claw}$ for certain values of $n$.

\begin{prop}\label{554}If $G$ is a \claw-induced-saturated graph, then the degree sequence of $G$ is not $(5,5,4,\ldots,4)$.
\end{prop}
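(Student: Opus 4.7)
The plan is to argue by contradiction: suppose $G$ is a \claw-induced-saturated graph on $n$ vertices with degree sequence $(5,5,4,\ldots,4)$, and let $u,v$ be the two degree-5 vertices. The handshake lemma gives $|E(G)|=2n+1$, and (as observed in the proof of Lemma~\ref{lem:sis_clawdeg}) every edge of $G$ lies in a triangle, since deleting $xy$ creates an induced claw with $x,y$ as non-adjacent leaves and hence a common neighbor. The first observation is that every $x\in V(G)\setminus(N[u]\cup N[v])$ has degree 4 with all of its neighbors of degree 4, so Lemma~\ref{lem:clawnbrs} gives $G[N(x)]\in\{2K_2,P_4\}$; thus the red/blue dichotomy from the 4-regular analysis applies at every vertex away from $\{u,v\}\cup N(u)\cup N(v)$.

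The second step is to analyze the local structure at $u$ (symmetrically at $v$). The graph $G[N(u)]$ is claw-free on 5 vertices with $\delta\ge 1$, and since each degree-4 neighbor $a$ of $u$ has at most three neighbors inside $N(u)$, also $\Delta(G[N(u)])\le 3$ (except possibly at $v\in N(u)$ in Case A below, where $\Delta$ could be 4). Furthermore, for each $a\in N(u)$, deleting $ua$ creates an induced claw with $u,a$ as non-adjacent leaves, so there is a common neighbor $c\in N(u)\cap N(a)$ admitting a vertex $l\in N(c)\setminus(N[u]\cup N[a])$. This forces $c$ to have a ``private'' external neighbor $l$ and considerably restricts the admissible configurations of $G[N(u)]$, leaving only a short list such as $C_5$, $P_5$, $K_3\cup K_2$, and the bull.

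The third step splits into Case A ($uv\in E(G)$) and Case B ($uv\notin E(G)$). In Case A, deleting $uv$ produces an induced claw with $u,v$ as non-adjacent leaves, so $u$ and $v$ share a common neighbor with a neighbor outside $N[u]\cup N[v]$. In Case B, adding $uv$ produces an induced claw centered at (say) $u$, yielding $p,q\in N(u)$ with $\{p,q,v\}$ pairwise non-adjacent; symmetrically at $v$. For each admissible pair of configurations $(G[N(u)],G[N(v)])$, the plan is to combine these local constraints with the red/blue structure of the far vertices (and, where useful, the blue-triangle structure of Lemma~\ref{lem:bluetri}) to exhibit a specific edge whose removal does not create an induced claw, or a non-edge whose addition does not create one, contradicting \claw-induced-saturation.

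The main obstacle is organizing the case analysis on the admissible configurations of $G[N(u)]$ and $G[N(v)]$. Most configurations should be ruled out immediately by the ``delete $ua$'' constraint, which demands a common neighbor with a specified private neighbor. The delicate part is dispatching the few remaining survivors uniformly without lengthy ad hoc bookkeeping; I would expect the cleanest finish to come from a triangle-counting or parity argument in the spirit of Proposition~\ref{3|n}, using that $3\tau=\sum_x t(x)$ with $t(x)\in\{2,3\}$ at every red/blue vertex, so that the ``excess'' contributions of $u$, $v$, and the vertices in $N(u)\cup N(v)$ must add up to a multiple of 3 in a way the above structural restrictions forbid.
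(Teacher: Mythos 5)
This is a plan rather than a proof: the entire substantive content of the argument --- the case analysis that actually derives a contradiction --- is deferred. You correctly set up the framework (every edge lies in a triangle, so $\delta(G[N(u)])\ge 1$; $\Delta(G[N(u)])$ is bounded; deleting $ua$ forces a common neighbor with a private external neighbor), which is essentially how the paper begins. But you never carry out the analysis of the admissible configurations, and your provisional ``short list'' is already off: both $P_5$ and the bull contain independent sets of size three, so they cannot occur as $G[N(u)]$ at all (that would put an induced claw centered at $u$ inside $G$). This suggests the filtering step has not actually been worked through. The paper's proof is precisely the case analysis you are postponing: it splits on $\Delta(G[N(v)])\in\{4,3,\le 2\}$ for a degree-5 vertex $v$, and in the last case reduces to $G[N(v)]\in\{K_2+K_3,\,C_5\}$, disposing of each configuration by exhibiting an edge or non-edge that violates saturation.

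The proposed ``clean finish'' via triangle counting in the spirit of Proposition~\ref{3|n} is speculative and, as stated, unavailable: Lemma~\ref{lem:bluetri}, Lemma~\ref{lem:adj}, and Proposition~\ref{3|n} are all proved under the hypothesis that $G$ is 4-regular, and their proofs genuinely use that hypothesis (e.g., ``since $G$ is 4-regular, $y$ has at most one neighbor outside of $\{x_2,x_3,x_4,v\}$''). You would need to re-derive analogues of these statements for the vertices in and around $N[u]\cup N[v]$, and you give no computation showing that the resulting ``excess'' contributions actually produce a contradiction modulo 3. Without either the completed case analysis or a verified counting argument, the proof has a genuine gap at its core.
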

\begin{proof}
Suppose $G$ is a counterexample to the claim, and let $v$ be a vertex of degree 5.

\begin{case} $\Delta(G[N(v)])=4$.\end{case} 
That is, $v$ has a neighbor $u$ such that $X:=N(u)\cap N(v)$is a set of order 4. 
If we delete $vx'$ for some $x' \in X$, then the resulting \claw is not centered at $u$ since the neighbors of $u$ are adjacent to $v$.
Thus $x'$ and $v$ share a neighbor $x\in X$ and there is some $\in N(x)\setminus[N(x')\cup N(v)]$. 
Now $N(x)=\{u,v,x',y\}$ and $uy,vy,x'y\notin E(G)$, so the edge $xy$ is in no triangle, a contradiction.

\begin{case}\label{Case3} $\Delta(G[N(v)])=3$. \end{case}
That is, there exist $u \in N(v), w\notin N[u],$ and $X\subseteq N(u)$ with $|X|=3$ so that  $N(v)=\{u,w\} \cup X$. Since deleting the edge $vw$ creates a \claw, 
there exist vertices $x'$ and $y$ such that $x'$ is a common neighbor of $v$ and $w$, $y$ is adjacent to $x'$, and $y$ is not adjacent to $w$ nor $v$. Note $x' \in X$ and $y \notin N[v]$.
Then to prevent a \claw in $G$ with center $x'$ and leaves $u,w,y$, we have $uy \in E(G)$. Then, $u,v$ are the vertices of degree 5 and all other vertices have degree 4 so that $N(x')=\{u,v,y,w\}$. Since $u$ is not the center of a \claw, and $x'$ has no neighbors in $X$, the vertices of $X\setminus\{x\}$ (call them $a$ and $b$) are adjacent. 

Note that $u$ was chosen as an arbitrary vertex of $N(v)$ with three neighbors in $N(v)$, and we showed $\deg(u)=5$. 

Now, $\deg(a)=\deg(b)=4$ and each of $a$ and $b$ currently has two neighbors in $N(v)$. If $a$ (or $b$) were adjacent to $w$, then the argument previously applied to $u$ would guarantee that $\deg(a)=5$ (or $\deg(b)=5$), thus giving us at least three vertices of degree 5. Therefore $a$ and $b$ each have a neighbor outside of $N[v]$; due to the necessity that every edge be in a triangle, they share this neighbor, which we shall name $z$. 
It is possible that $z=y$.  Suppose $z\neq y$, then deleting $az$ should create an induced \claw centered at at common neighbor of $a$ and $z$.  However, the only option is $b$, which is not the center of such a \claw, a contradiction.  So suppose $z = y$, then $\deg_{N(u)}(a)=3$. By the previous argument with $u$, we must have $\deg_G(a)=5$, a contradiction.

\begin{case} $\Delta(G[N(v)])\leq 2$. \end{case}
$N(v)$ has no independent set of size three, lest it be the center of a \claw.
Then $G[N(v)] \in \{K_2 + K_3,C_5\}$. 

Suppose first $G[N(v)]=K_2 + K_3$, with $\{x_1,x_2,x_3\}$ inducing $K_3$. 
We may suppose $\deg(x_1)=\deg(x_2)=4$ since at most one of the vertices in the copy of $K_3$ may have degree 5. So each of $x_1$ and $x_2$ have a neighbor outside of $N[v]$, say $y$ and $z$, respectively.  If $y \neq z$, then since every edge is contained in a triangle, $x_3$ is adjacent to both $y$ and $z$.  However, this implies that $\deg(z) = 5$ and $\Delta(G[N(z)]) \ge 3$, as evidenced by $x_1$.  This puts us in Case \ref{Case3}.

So $y = z$, and consequently, $N[x_1] = N[x_2]$.   The only common neighbors of $x_1$ and $y$ are $x_2$ and possibly $x_3$. If $x_3\notin N(x)\cap N(y)$, removing $x_1y$ should create an induced \claw centered at $x_2$, but it does not.  Thus, removing $x_1y$ creates an induced \claw centered at $x_3$, which implies that $x_3$ is adjacent to $y$, as well as another vertex $y'$ not in $N[v] \cup \{y\}$.  However, this implies that $\deg(x_3)=5$, and $\Delta(G[N(x_3)]) \geq 3$, as evidenced by $x_1$.  This also puts us in Case \ref{Case3}.

Suppose now $G[N(v)]=C_5$ with cycle $x_1x_2x_3x_4x_5$. 
We may assume that $x_1,x_2,x_3,$ and $x_4$ all have degree 4.
The only common neighbors of $v$ and $x_2$ are $x_1$ and $x_3$.  When removing $vx_2$ we obtain an induced claw centered at either $x_1$ or $x_3$.  Without loss of generality, assume it is $x_3$.  Since $x_4$ is not a leaf of a \claw that  features $v$ as a leaf, $x_3$ has a neighbor $y \not\in N(v) \cup N(x_2)$. Since $G[x_3,x_2,x_4,y]$ cannot be a \claw, we must have $yx_4 \in E(G)$. Similarly, if we delete $vx_3$, the candidates for center of the ensuing \claw are $x_2$ and $x_4$; we know the neighborhood of $x_4$, and so see that $x_2$ is the center. Then there exists $y' \in N(x_2)$ such that $y' \not\in N(v) \cup N(x_3)$, and as before $y'x_1 \in E(G)$. Now we know the neighborhoods of $x_1,x_2,x_3$, and $x_4$. If we add the edge $x_1x_4$, we find that no \claw is formed, a contradiction. 
\end{proof}

\begin{prop}\label{644}
Let $G$ be a \claw-induced-saturated graph. Then for any $n \geq 7$, the degree sequence of $G$ is \emph{not} $(6,4,\ldots,4)$.
\end{prop}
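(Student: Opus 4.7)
My plan is to argue by contradiction. Suppose $G$ is a $\claw$-induced-saturated graph on $n \ge 7$ vertices with degree sequence $(6, 4, \ldots, 4)$; let $v$ be the unique vertex of degree $6$ and set $H := G[N(v)]$. I would first record three properties of $H$: since every edge of $G$ lies in a triangle (as in the proof of Lemma~\ref{lem:sis_clawdeg}), each $x \in N(v)$ shares a neighbor with $v$ inside $N(v)$, so $\delta(H) \ge 1$; since $\deg_G(x) = 4$ and $x \sim v$, $\Delta(H) \le 3$; and since $G$ is $\claw$-free, $\alpha(H) \le 2$. Hence $\Delta(H) \in \{2, 3\}$.

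Next I would rule out $\Delta(H) = 2$. The constraints $\delta(H) \ge 1$, $\Delta(H) \le 2$, $\alpha(H) \le 2$, and $|V(H)| = 6$ force $H = 2K_3$. Writing $N(v) = \{a_1, a_2, a_3, b_1, b_2, b_3\}$ with each triple inducing a triangle, each $a_i$ has a unique neighbor $a_i'$ outside $N[v]$. The triangle condition on $a_i a_i'$ selects a common neighbor from $\{v, a_{i-1}, a_{i+1}\}$; since $v \not\sim a_i'$, the common neighbor is $a_{i-1}$ or $a_{i+1}$, forcing $a_i' = a_{i-1}'$ or $a_i' = a_{i+1}'$, and iterating gives $a_1' = a_2' = a_3' =: a^*$. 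Then $\deg(a^*) = 4$ gives a fourth neighbor $z$ distinct from $a_1, a_2, a_3$, and the triangle condition on $a^* z$ forces $z \in N(a_j)$ for some $j$; but $N(a_j) = \{v, a_{j-1}, a_{j+1}, a^*\}$ and $z \notin \{a_1, a_2, a_3, a^*\}$ force $z = v$, contradicting $v \not\sim a^*$.

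For $\Delta(H) = 3$, pick $u \in N(v)$ with $N_H(u) = \{a, b, c\}$, so $N_G(u) = \{v, a, b, c\}$, and let $\{d, e\} = N(v) \setminus \{u, a, b, c\}$. The triple $\{u, d, e\}$ together with $\alpha(H) \le 2$ forces $de \in E(G)$, and analyzing the induced $\claw$ in $G - uv$ (which must have $u, v$ as two leaves with a common-neighbor center in $\{a, b, c\}$ and a third leaf outside $N[v]$) shows that some vertex of $\{a, b, c\}$ has a neighbor outside $N[v]$; this already handles $n = 7$. Let $\ell$, $m$, $k$ denote the numbers of edges inside $\{a,b,c\}$, between $\{a,b,c\}$ and $\{d,e\}$, and between $\{a,b,c\}$ and $V(G) \setminus N[v]$, respectively. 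Double-counting the non-$\{v, u\}$ neighbors of $a, b, c$ yields $2\ell + m + k = 6$, with $k \ge 1$ and (from $\alpha(H) \le 2$ on $\{a,b,c\}$) $\ell \ge 1$; the case $\ell = 3$ immediately fails since it forces $k = 0$. Applying $\alpha(H) \le 2$ to the triples $\{a,c,d\}, \{b,c,d\}, \{a,c,e\}, \{b,c,e\}$ together with $\Delta(H) \le 3$ reduces the problem to a small number of concrete subcases indexed by $\ell \in \{1, 2\}$, up to the symmetries $a \leftrightarrow b$, $a \leftrightarrow c$, and $d \leftrightarrow e$.

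In each such subcase I expect a contradiction to arise from one of three sources: (i) some edge is forced to lie in no triangle, e.g., when a vertex in $\{a, b, c\}$ has a single outside neighbor $w$ but no element of $N(x) \setminus \{w\}$ is adjacent to $w$; (ii) some vertex's neighborhood contains an independent $3$-set, producing an induced $\claw$ already in $G$; or (iii) deleting a specific edge (most often of the form $vx$ for some $x \in N(v)$) leaves no vertex eligible to serve as the center of the required induced $\claw$, violating induced-saturation. The main obstacle will be organizing the $\Delta(H) = 3$ analysis cleanly; the key leverage is that deleting $vx$ constrains the center of any emerging $\claw$ to lie in the small set $N(v) \cap N(x)$, which in each subcase is explicit and small enough to rule out directly.
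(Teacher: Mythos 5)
Your approach is genuinely different from the paper's. The paper splits on $\delta(G[N(v)])$: the value $3$ is excluded because $N[v]$ would then be a $7$-vertex \claw-induced-saturated component, contradicting the computer-search fact that no nontrivial such graph has fewer than nine vertices; the values $0,1$ are excluded by a clique argument on $F\setminus N[u]$; and the value $2$ is handled by a short chain of forced adjacencies ending in a vertex of degree at least $5$. You instead split on $\Delta(G[N(v)])$, which buys a self-contained argument with no appeal to the computer search: your observation that deleting $uv$ forces a vertex of $\{a,b,c\}$ to have a neighbor outside $N[v]$ (so $k\ge 1$) subsumes the paper's $\delta=3$ case, and your $\Delta=2$ case ($H=2K_3$, the three outside neighbors of one triangle coincide in a vertex $a^*$, and the fourth neighbor of $a^*$ lies on an edge in no triangle) is complete and correct.

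The one real shortfall is that the $\Delta(H)=3$ case --- the bulk of the proposition --- is a plan rather than a proof: you say you ``expect'' a contradiction in each subcase but exhibit none. The cases do close, and more cleanly than your sketch suggests. The triples $\{a,b,d\},\{a,c,d\},\{b,c,d\}$ show that if $d$ had no neighbor in $\{a,b,c\}$ then $\ell=3$; the same holds for $e$, so $m\ge 2$, and then $2\ell+m+k=6$ with $k\ge1$ forces $\ell=1$ (your $\ell=2$ branch is vacuous). Writing $ab$ for the unique edge inside $\{a,b,c\}$, degree counting and your four triples force $|N(c)\cap\{d,e\}|\in\{1,2\}$, leaving exactly two configurations up to the symmetries you name. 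If $N(c)=\{v,u,d,e\}$, then either the fourth neighbors of $a$ and $b$ are both outside $N[v]$ and the triangle condition forces them to coincide in a vertex $w$, after which $G-va$ has no eligible claw center (the candidates $u$ and $b$ have all remaining neighbors adjacent to $v$ or to $a$); or, say, $N(a)=\{v,u,b,d\}$ and $b$ has an outside neighbor $w$, in which case $bw$ lies in no triangle. If $c$ is adjacent to exactly one of $d,e$, say $d$, then $N(e)=\{v,d,a,b\}$ is forced and again $G-va$ admits no claw center among $\{u,b,e\}$. You should carry out these verifications explicitly; as written, the decisive step is asserted rather than proved.
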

\begin{proof}
Suppose $G$ is a counterexample to this claim. Let $v$ have degree six, and let $F=G[N(v)]$, so $|F|=6$, $\Delta(F) \leq 3$, and $\alpha(F) \leq 2$ else $v$ is the center of a \claw. In fact $\alpha(F)=2$ in order for the vertices of $N(v)$ to have degree four in $G$. 
If $\delta(F)=3$, then $N[v]$ is a component of $G$, and this component is \claw-induced-saturated. 
However, from the computer search, with results listed in Table~\ref*{tab:claws}, we know that there is no nontrivial \claw-induced-saturated graph on fewer than nine vertices. 
Therefore $\delta(F) \leq 2$. 
Indeed, we claim $\delta(F)=2$. 
If $\delta(F) \leq 1$, let $u$ be a vertex with minimum $F$-degree (i.e. $\deg_F(u)$ is minimum), and let $T=F\setminus N[u]$. 
Then $T$ is a clique, else two nonadjacent vertices in $T$ together with $u$ and $v$ form a \claw.
Hence $|T|=4$ and the vertices of $T$ have no neighbors outside of $N[v]$ in $G$. 
Now, deleting the edge between $v$ and any vertex of $T$ does not create an induced \claw, so $\delta(F)=2$.

Let $u$ be a vertex in $F$ with $\deg_F(u)=2$, and let $T=F\setminus N[u]$. 
As before, $T$ is a clique, specifically a triangle.   Let $N_F(u) = \{u', u''\}$. 
Since $\deg_F(u)=2$, $u$ has one neighbor $w$ outside of $N[v]$; since every edge is in a triangle, we may assume that $w$ is adjacent to $u'$.  Now, the only common neighbors of $v$ and $u$ are in $N_F(u)$. 
Since $\delta(F)=2$, $u'$ must have another neighbor in $F$ other than $u$.  Thus, the only neighbor of $u'$ not in $N[v]$ is $w$, and if we delete $vu$, the resulting induced \claw cannot be centered at $u'$.  So it must be centered at $u''$, which in turn has a neighbor $w''$ outside of $N[v] \cup \{w\}$.  Since $u''w''$ is in a triangle and $\delta(F) = 2$, $u''$ and $w''$ share a neighbor $t''$ in $F$. 
Since $\delta(F) = 2$, no vertex in $F$ has two neighbors outside $N[v]$.  So $t'' \neq u'$, and hence $t'' \in T$. 
But now $\deg(t'') \geq 5$, a contradiction.
\end{proof}

Finally, we construct graphs which we use to find an upper bound for $\sis{n}{\claw}$.
}

\begin{lemma}\label{suff_claw}
If $G$ is a graph where the neighborhood of every vertex induces $2K_2$, then $G$ is \claw-induced-saturated.
\end{lemma}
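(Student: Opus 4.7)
The plan is to verify the three defining conditions of \claw-induced-saturation directly: that $G$ contains no induced \claw, that deleting any edge creates one, and that adding any non-edge creates one.

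The first condition is immediate. Since $G[N(v)] \cong 2K_2$ and $\alpha(2K_2) = 2$, no vertex $v$ has three pairwise non-adjacent neighbors, so no vertex can serve as the center of an induced \claw in $G$. Hence $G$ is \claw-free.

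For the deletion step, fix an edge $uv \in E(G)$. Because $v \in N_G(u)$ and $G[N_G(u)] \cong 2K_2$, the vertex $v$ has a unique matching partner $w \in N_G(u)$ with $vw \in E(G)$, so $\{u,v,w\}$ induces a triangle. Now $G[N_G(w)] \cong 2K_2$ contains the edge $uv$, which forces $\{u,v\}$ to be one of the two $K_2$s of $N_G(w)$. The other $K_2$, say $\{p,q\}$, consists of two vertices each non-adjacent to both $u$ and $v$. In $G - uv$ the set $\{w,u,v,p\}$ then induces a \claw centered at $w$, as required.

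The addition step is the crux. Fix $uv \notin E(G)$ and suppose for contradiction that $G+uv$ contains no induced \claw. Write $N_G(u) = \{a_1,b_1,a_2,b_2\}$ with the two $K_2$-edges being $a_1b_1$ and $a_2b_2$. For each of the four independent pairs $\{a_1,a_2\}, \{a_1,b_2\}, \{b_1,a_2\}, \{b_1,b_2\}$ in $N_G(u)$, the pair together with $u$ and $v$ would form an induced \claw centered at $u$ in $G+uv$ unless $v$ is adjacent to at least one element of the pair. A short enumeration of subsets $A := N_G(u) \cap N_G(v)$ shows that $A$ meets every such pair only if $A$ contains both endpoints of one of the $K_2$s, say $\{a_1,b_1\}$. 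I then pivot to the vertex $a_1$: the neighborhood $N_G(a_1)$ contains $u,\, b_1,\, v$, and $b_1$ is adjacent to both $u$ and $v$ in $G$, giving $b_1$ degree at least $2$ in $G[N_G(a_1)]$ and contradicting $G[N_G(a_1)] \cong 2K_2$.

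The main obstacle is the addition step. The \claw-freeness and edge-deletion arguments are essentially one-liners given the $2K_2$ hypothesis, but the addition case requires the short enumeration to pin down the structure of $A$, and the key idea is to leverage the local $2K_2$ structure at a second vertex ($a_1$) to convert the forced adjacency pattern into a degree contradiction.
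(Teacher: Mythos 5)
Your proof is correct and follows essentially the same route as the paper: claw-freeness from $\alpha(2K_2)=2$, deletion via the triangle on the edge plus a vertex of the other $K_2$ in the third vertex's neighborhood, and addition via the observation that a vertex adjacent to both ends of a $K_2$ in $N(u)$ would force a degree-$2$ vertex in some neighborhood. The only cosmetic difference is that you organize the addition step as a contradiction with an explicit transversal enumeration, where the paper directly constructs the claw $\{x,y,u_2,v_2\}$; the underlying structural fact is identical.
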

\begin{proof}
Since no vertex has three independent neighbors, $G$ contains no induced \claw.   Suppose we delete an edge $xy$.  
Since every edge is in a triangle, say $xyz$, deleting $xy$ leaves $z$ as the center of a \claw with leaves $x$, $y$, and any other neighbor of $z$.
If we add an edge between two vertices with no common neighbors, then we take the new edge together with two nonadjacent neighbors of one of the vertices and find a \claw.
Therefore it suffices to consider adding an edge $xy$, where $x$ and $y$ share a neighbor. 
Let $N(x)=\{u_1,u_2,v_1,v_2\}$ with $u_1u_2,v_1v_2 \in E(G)$, and suppose $u_1 \in N(y)$.
Then $u_2 \notin N(y)$ otherwise $N(u_2)$ would contain a $P_3$ and not be $2K_2$.  Similarly, both $v_1$ and $v_2$ cannot be in $N(y)$.  So we may assume $v_2 \notin N(y)$.  Then upon adding $xy$, $\{x,y,u_2,v_2\}$ induces a \claw.
\end{proof}

\begin{lemma}
Let $G$ be a graph with at most one isolated vertex, where each nontrivial  component is one of the graphs in Figure~\ref{fig:claw-i-s}.  Then $G$ is \claw-induced-saturated.
\end{lemma}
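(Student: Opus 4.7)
The plan is to reduce the statement to Lemma~\ref{suff_claw} by first observing (via inspection of Figure~\ref{fig:claw-i-s}) that every vertex in each depicted component has an open neighborhood that induces $2K_2$. Once this finite check is done, the rest of the argument should be largely bookkeeping about which component each modified edge lives in.

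First I would show that $G$ contains no induced \claw. Since \claw is connected, any induced copy would sit inside a single component. The isolated vertex (if present) contributes no induced subgraph on four vertices, and each nontrivial component, taken in isolation, is \claw-induced-saturated by Lemma~\ref{suff_claw}; in particular it has no induced \claw.

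Next I would handle edge deletions and edge additions within a single nontrivial component. If $e = xy$ is deleted from component $C$, or if a non-edge $xy$ is added with both endpoints in $C$, then by Lemma~\ref{suff_claw} applied to $C$ the modification produces an induced \claw supported entirely within $V(C)$; since the rest of $G$ is untouched and shares no vertices with $C$, this induced \claw persists in $G\pm e$.

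The only remaining case, and the one requiring care, is adding an edge $xy$ whose endpoints lie in distinct components. Since there is at most one isolated vertex, at least one endpoint, say $x$, is non-isolated. Then $N_G(x)$ induces $2K_2$, so $x$ has two non-adjacent neighbors $a,c$ inside its component. Because $y$ lies in a different component of $G$, $y$ is adjacent in $G+xy$ to $x$ but not to $a$ or $c$, and $\{x,a,c,y\}$ induces a \claw centered at $x$. This covers the case where the added edge touches the isolate as well. The only conceivable obstacle is verifying the $2K_2$-neighborhood property for each graph in Figure~\ref{fig:claw-i-s}, which is a routine finite check and the true content of constructing those figures in the first place.
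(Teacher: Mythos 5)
Your overall architecture (reduce to Lemma~\ref{suff_claw} for each component, then handle cross-component edge additions using two nonadjacent neighbors of a non-isolated endpoint) matches the paper's, but there is a genuine gap in the step you dismiss as ``a routine finite check.'' It is \emph{not} true that every vertex of every graph in Figure~\ref{fig:claw-i-s} has a neighborhood inducing $2K_2$. That property forces the graph to be $4$-regular, but the $11$-vertex graph $J$ of Figure~\ref{11vtcs} has $2n+2=24$ edges, hence average degree $48/11>4$; concretely, $J$ contains a $K_4$, and a vertex of that $K_4$ has degree $5$ with neighborhood inducing $K_3+K_2$. So Lemma~\ref{suff_claw} simply does not apply to $J$, and your proof that each nontrivial component is \claw-induced-saturated breaks down for components isomorphic to $J$. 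The paper handles exactly this: it applies Lemma~\ref{suff_claw} only to the graphs of Figures~\ref{k3xk3}, \ref{12vtcs}, and~\ref{15vtcs}, and verifies separately, by direct inspection, that $J$ is \claw-induced-saturated (no induced claw is centered at a degree-$5$ vertex because $K_3+K_2$ has no independent set of size three, and one checks additions and deletions case by case).

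The remainder of your argument is sound and would survive this repair. In particular, for the cross-component case you only need the weaker statement that every vertex of every graph in Figure~\ref{fig:claw-i-s} has two nonadjacent neighbors, and that \emph{is} true for $J$ as well (a $K_3+K_2$ neighborhood still contains a nonadjacent pair); this is precisely the observation the paper uses there. So the fix is localized: replace the blanket appeal to Lemma~\ref{suff_claw} with a direct verification for $J$, and downgrade the property you invoke in the cross-component step from ``neighborhood induces $2K_2$'' to ``has two nonadjacent neighbors.''
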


\begin{figure}
\centering
\subcaptionbox{$H=K_3\cartprod K_3$, 9 vertices\label{k3xk3}}[.4\linewidth]{
\begin{tikzpicture}[xscale=2, yscale=1.5]
\node at (0,0)[vx](0){};
\node at (1,.5)[vx](1){};
\node at (2,0)[vx](2){};
\node at (.5,1)[vx](3){};
\node at (1.5,1.5)[vx](4){};
\node at (2.5,1)[vx](5){};
\node at (0,2)[vx](6){};
\node at (1,2.5)[vx](7){};
\node at (2,2)[vx](8){};
\draw (0)--(1)--(2)--(0);
\draw (3)--(4)--(5)--(3);
\draw (6)--(7)--(8)--(6);
\draw (0)--(3)--(6)--(0);
\draw (1)--(4)--(7)--(1);
\draw (2)--(5)--(8)--(2);
\end{tikzpicture}
}\hspace{1cm}
\subcaptionbox{Graph $J$ on 11 vertices\label{11vtcs}}[.4\linewidth]{
\begin{tikzpicture}[scale=.75]
\draw (0,0) node[vx](b1) {};
\draw (2,0) node[vx](b2) {};
\draw (1,1) node[vx](b3) {};
\draw (b1)--(b2)--(b3)--(b1);
\draw (-1,2) node[vx](r1) {};
\draw (3,2) node[vx](r2) {};
\draw (b1)--(r1)--(b3)--(r2)--(b2);
\draw (1,3) node[vx](g1) {};
\draw (0,4) node[vx](g2) {};
\draw (2,4) node[vx](g3) {};
\draw (1,5) node[vx](g4) {};
\foreach \x in {1,...,4}
	{\foreach \y in {1,...,\x}
		\draw (g\x)--(g\y);}
\draw (-1,6) node[vx](b4) {};
\draw (3,6) node[vx](b5) {};
\draw (g2)--(b4)--(g4)--(b5)--(g3);
\draw (g2)--(r1)--(b4)--(b5)--(r2)--(g3);
\draw (b1)--(g1)--(b2);
\end{tikzpicture}
}
\subcaptionbox{Graph $K$ on 12 vertices\label{12vtcs}}[.4\linewidth]{
\begin{tikzpicture}[scale=0.25]
\draw (0,0) circle (9cm);
\foreach \x in {1,...,6}
{
\draw (150-60*\x:7cm) node[vx] (A\x) {};
}
\foreach \x in {1,3,5}
{
\draw (120-60*\x:3cm) node[vx] (B\x) {};
}
\foreach \x in {2,4,6}
{
\draw (120-60*\x:9cm) node[vx] (C\x) {};
\pgfmathparse{int(\x-1)}
\draw (C\x) -- (A\x) -- (A\pgfmathresult);
}
\draw (A1) -- (C6);
\draw (A3) -- (C2);
\draw (A5) -- (C4);
\foreach \x in {1,3,5}
{
\pgfmathparse{int(\x+1)}
\draw (B\x) -- (A\x);
\draw (A\pgfmathresult) -- (B\x);
}
\draw (B1) -- (B3) -- (B5) -- (B1);
\draw (A1) -- (A6);
\draw (A2) -- (A3);
\draw (A4) -- (A5);
\end{tikzpicture}
}
\subcaptionbox{Graph $L$ on 15 vertices\label{15vtcs}}[.4\linewidth]{
\begin{tikzpicture}
\draw (0,0) circle (2.25cm);
\foreach \x in {1,2,3,4,5}{\draw  (360/5*\x:0.75cm)  node[vx](a\x){};}
\foreach \x in {1,2,3,4,5}{\draw  (36+360/5*\x:1.5cm)  node[vx](b\x){};}
\draw (a1)--(a2)--(a3)--(a4)--(a5)--(a1);
\foreach \x in {1,2,3,4,5}{\draw  (360/5*\x:2.25cm)  node[vx](c\x){};}
\draw (a1)--(b1)--(a2)--(b2)--(a3)--(b3)--(a4)--(b4)--(a5)--(b5)--(a1);
\draw (c1)--(b1)--(c2)--(b2)--(c3)--(b3)--(c4)--(b4)--(c5)--(b5)--(c1);
\end{tikzpicture}
}
\caption{These graphs are \claw-induced-saturated.}\label{fig:claw-i-s}
\end{figure}
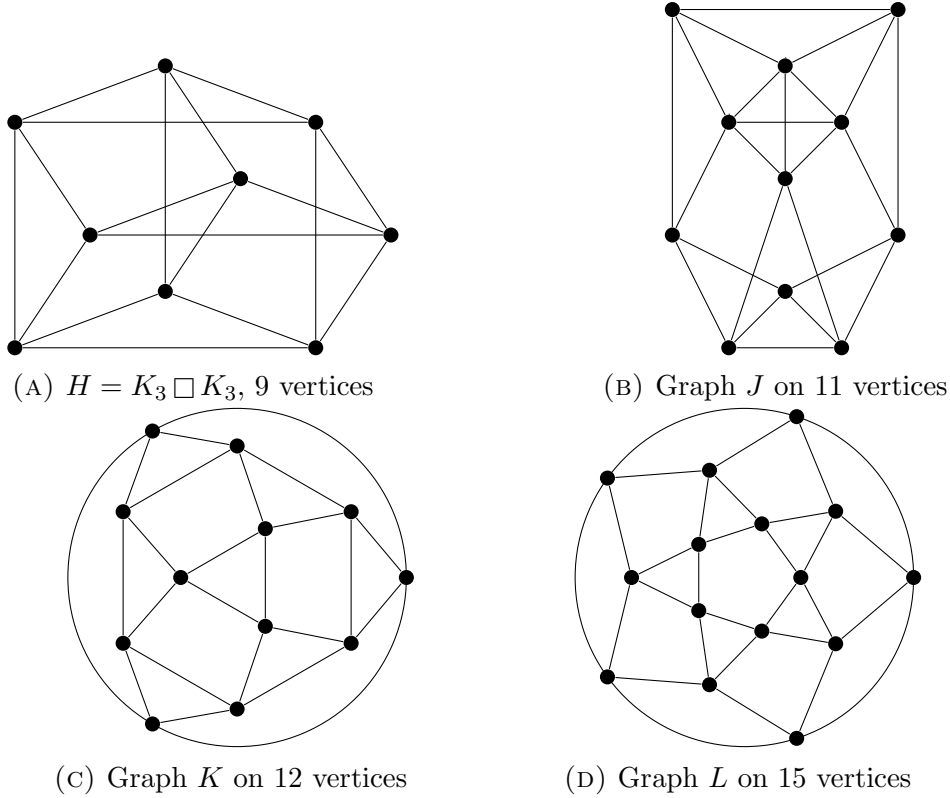

\begin{proof}
By inspection, the graph in Figure ~\ref{11vtcs} is \claw-induced-saturated, and since the graphs in Figures \ref{k3xk3}, \ref{12vtcs}, and \ref{15vtcs} have the property that the neighborhood of every vertex induces $2K_2$, they are \claw-induced-saturated by Lemma~\ref{suff_claw}.  

Now let $G$ be a graph with at most one isolated vertex and each of the remaining components are one of the graphs from Figure~\ref{fig:claw-i-s}.
Since each nontrivial component of $G$ is \claw-induced-saturated, we only need to consider adding an edge $xy$ between components.    When we add the edge $xy$, at least one of $x$ and $y$ must be in a nontrivial component, say $x$.  By inspection we see every vertex in every graph of Figure \ref{fig:claw-i-s} has two nonadjacent neighbors, and in particular, this holds for $x$.  Thus, $x$ together with these two neighbors  and $y$ induce a \claw.  Therefore, $G$ is \claw-induced-saturated.
\end{proof}


We now can prove Theorem \ref{thm:claw-sis}.

\begin{proof}[Proof of Theorem \ref{thm:claw-sis}]
\iftoggle{sub}{
Lemma \ref{lem:sis_clawdeg} together with the degree-sum formula gives us the lower bound of $2n - 2$.  To obtain the upper bound, we contruct graphs using $H, J, K$, and $L$ from Figure \ref{fig:claw-i-s}.  These constructions will be based on the residue class of $n$ modulo 3.   

\begin{case}\label{0mod3}
$n \equiv 0 \mod 3$, $n \geq 9$
\end{case}
Use $\lfloor n/9 \rfloor-1$ copies of $H$, together with one copy of $H$, $K$, or $L$, for a graph with $2n$ edges.

\begin{case}
$n \equiv 1 \mod 3$, $n \geq 10$
\end{case} 
Use an isolated vertex with a graph from Case~\ref{0mod3} for a graph with $2n-2$ edges.
\begin{case}
$n \equiv 2 \mod 3$, $n\geq20$ or $n=11$.
\end{case}
If $n=11$, the graph $J$ suffices. If $n > 11$, then take $J$ and a construction from Case~\ref{0mod3}. This achieves $2n+2$ edges.
}
{
We exhibit graphs with the desired number of edges to prove the upper bounds.

\begin{case}\label{0mod3}
$n \equiv 0 \mod 3$, $n \geq 9$
\end{case}
Use $\lfloor n/9 \rfloor-1$ copies of $H$, together with one copy of $H$, $K$, or $L$, for a graph with $2n$ edges.
Alternatively, we could generalize $L$ for $n\geq 15$ by having $n/3$ vertices in the outer cycle, $n/3$ vertices in the inner cycle, and $n/3$ vertices between the two cycles.
\begin{case}
$n \equiv 1 \mod 3$, $n \geq 10$
\end{case} 
Use an isolated vertex with a graph from Case~\ref{0mod3} for a graph with $2n-2$ edges.
\begin{case}
$n \equiv 2 \mod 3$, $n\geq20$ or $n=11$.
\end{case}
If $n=11$, the graph $J$ suffices. If $n \geq 20$, then take $J$ and a construction from Case~\ref{0mod3}. This achieves $2n+2$ edges.

For the lower bound, let $G$ be any \claw-induced-saturated graph.
Corollary \ref{cor:sis_clawmin} gives us a general lower bound of $2n-2$. 
Suppose $G$ has no isolated vertex. Then by Corollary~\ref{cor:sis_clawmin}, $e(G) \geq 2n$, as desired. Suppose then that $G$ does have an isolated vertex, and $n \not\equiv 1 \mod 3$. Then $(n-1) \not\equiv 0 \mod 3$, so by Lemmas~\ref{lem:sis_clawdeg} and \ref{3|n}, the minimum degree of the non-isolated vertices is at least 4, and $\Delta(G) \geq 5$. Then 
$e(G) \geq \big\lceil\frac{4(n-1)+1}{2}\big\rceil=2n-1$, with equality only if the degree sequence of $G$ is $(5,5,4,\ldots,4,0)$ or $(6,4,\ldots,4,0)$. Since the graph obtained by deleting the isolate is \claw-induced-saturated, by Propositions~\ref{554} and $\ref{644}$, $e(G) \geq 2n$.}
\end{proof}

\iftoggle{sub}{
It is worth noting that for $n \equiv 1 \mod 3$, $n \ge 10$, $\sis{n}{K_{1,3}} = 2n - 2$.  Additionally, a detailed case analysis of degree sequences of $K_{1,3}$-induced-saturated graphs  shows that $\sis{n}{K_{1,3}} \ge  2n$ for $n \not\equiv 1 \mod 3$.  In particular, this implies $\sis{n}{K_{1,3}} = 2n$ for $n \equiv 0 \mod 3$, $n \ge 9$, and $2n \le \sis{n}{K_{1,3}} \le 2n + 2$ for $n \equiv 2 \mod 3$, $n \ge 20$.
}{}


%
%

\section{$C_4$ and its complement}\label{sec:small cycles}

In this section we show that the induced saturation number of $C_4$ is zero for sufficiently large $n$, and we compute some bounds on $\sis{n}{C_4}$. Additionally, using Observation \ref{thm:complement} and the fact that $\overline{C_4}=2K_2$, we use $C_4$-induced-saturated graphs to obtain results for matchings. 


\begin{cons}\label{gen gen icos}
For $j \geq 5$ and $k \geq 2$, let $I_j^k$ be the graph that combines $k$ copies of a wheel with $j$ spokes.  Label the copies $W^1,\ldots,W^k$, and label the vertices of $W^i$ so that its center is $w_0^i$, and the outer cycle of $W^i$ is $w_1^i,\ldots,w_j^i$. For $1\leq i<i^*$,  add the edges $w^i_\ell w^{i^*}_\ell$ and $w^i_\ell w^{i^*}_{\ell+1}$ for every $\ell \in [j]$, defining $j+1:=1$. \end{cons}

$I_5^2$ is the icosahedron, shown in Figure \ref{fig:icos}. The icosahedron can be thought of as two wheels with 5 spokes whose outer-cycle vertices are joined by a zig-zag pattern (as described precisely in Construction \ref{gen gen icos}). Construction~\ref{gen gen icos} generalizes the icosahedron by allowing the number of wheels and the length of their outer cycles to vary. 

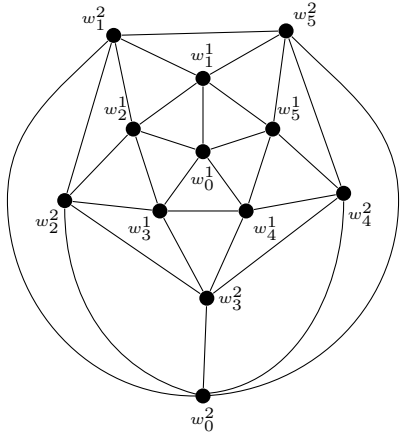
\begin{figure}[ht]
\begin{centering}
\begin{tikzpicture}[scale=1.3]
\draw (0,0) node[vx, label={[label distance=-.05cm]below:$\scriptscriptstyle{w_0^1}$}] (c) {};
\draw (0,0)+(18+72*1:.75 cm) node[vx, label={[label distance=-.1cm]18+72*1:$\scriptscriptstyle{w_{1}^1}$}] (c1) {};
\draw (0,0)+(18+72*2:.75 cm) node[vx, label={[label distance=-.2cm]18+72*2:$\scriptscriptstyle{w_{2}^1}$}] (c2) {};
\draw (0,0)+(18+72*3:.75 cm) node[vx, label={[label distance=-.2cm]18+72*3:$\scriptscriptstyle{w_{3}^1}$}] (c3) {};
\draw (0,0)+(18+72*4:.75 cm) node[vx, label={[label distance=-.2cm]18+72*4:$\scriptscriptstyle{w_{4}^1}$}] (c4) {};
\draw (0,0)+(18+72*5:.75 cm) node[vx, label={[label distance=-.2cm]18+72*5:$\scriptscriptstyle{w_{5}^1}$}] (c5) {};
\draw (0,0)+(55.5+72*1:1.5 cm) node[vx, label={[label distance=-.2cm]55.5+72*1:$\scriptscriptstyle{w_{1}^2}$}] (d1) {};
\draw (0,0)+(55.5+72*2:1.5 cm) node[vx, label={[label distance=-.2cm]55.5+72*2:$\scriptscriptstyle{w_{2}^2}$}] (d2) {};
\draw (0,0)+(55.5+72*3:1.5 cm) node[vx, label={[label distance=-.1cm]359:$\scriptscriptstyle{w_{3}^2}$}] (d3) {};
\draw (0,0)+(55.5+72*4:1.5 cm) node[vx, label={[label distance=-.2cm]55.5+72*4:$\scriptscriptstyle{w_{4}^2}$}] (d4) {};
\draw (0,0)+(55.5+72*5:1.5 cm) node[vx, label={[label distance=-.2cm]55.5+72*5:$\scriptscriptstyle{w_{5}^2}$}] (d5) {};
\foreach \x in {1,...,5}
    {
	\draw (c)--(c\x);
	\draw (c\x)--(d\x);}
\draw (c1)--(c2)--(c3)--(c4)--(c5)--(c1);
\draw (c1)--(d5);
\draw (c2)--(d1);
\draw (c3)--(d2);
\draw (c4)--(d3);
\draw (c5)--(d4);
\draw (d1)--(d2)--(d3)--(d4)--(d5)--(d1);
\draw (0,-2.5) node[vx, label={[label distance=-.1cm]below:$\scriptscriptstyle{w_0^2}$}] (d) {};
\draw (d)--(d3);
\draw (d2) to[out=-90, in=165] (d) to[out=0, in=-90] (d4);
\draw (d1) to[out=224, in=90] (-2,-.5) to[out=-90, in=180] (d) to[out=0, in=-90] (2,-.5) to[out=90,in=-45] (d5);
\end{tikzpicture}\caption{The icosahedron graph.} \label{fig:icos}
\end{centering}
\end{figure}

\begin{prop}\label{prop:C4indsat}
For $j \in \{5,6,7\}$, and $k \geq 2$, $I_j^k$ is $C_4$-induced-saturated.
\end{prop}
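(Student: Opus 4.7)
The plan is to verify the three conditions defining induced saturation: (A) $I_j^k$ is $C_4$-free; (B) deleting any edge creates an induced $C_4$; and (C) adding any non-edge creates an induced $C_4$. Throughout, I rely on two structural facts from Construction~\ref{gen gen icos}: centers connect only to their own wheel, and each outer vertex $w_\ell^i$'s neighbors in another wheel $i^*$ form two consecutive positions on the outer cycle of $i^*$, namely $\{w_\ell^{i^*}, w_{\ell+1}^{i^*}\}$ if $i<i^*$ and $\{w_\ell^{i^*}, w_{\ell-1}^{i^*}\}$ if $i>i^*$.

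For (A), I first show no induced $C_4$ contains a center. If $w_0^i a b c$ were such a cycle, then $a,c \in N(w_0^i)$ are outer vertices of wheel $i$, and $b$ is adjacent to both $a$ and $c$ but not to $w_0^i$. The latter forces $b$ to be an outer vertex of another wheel, but then $\{a,c\}$ equals $b$'s two-element consecutive neighborhood in wheel $i$, making $ac$ a chord. Hence any induced $C_4$ lies among outer vertices. I then case-analyze how the four outer vertices distribute among wheels: all four in one wheel (handled because $C_j$ with $j \geq 5$ contains no induced $C_4$); a $3{+}1$ split (the three same-wheel vertices must be consecutive on $C_j$, but the fourth vertex's two-consecutive zigzag neighborhood cannot hit both the first and the third); a $2{+}2$ split (the zigzag formulas force the two diagonal positions to coincide, giving a chord); and the $2{+}1{+}1$ and $1{+}1{+}1{+}1$ splits (the vertex in the wheel of lowest index forces its zigzag partners to lie in a two-element consecutive set of each other wheel, and a chord across the $C_4$ emerges directly from this).

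For (C), I split edges into three types and for each exhibit two non-adjacent common neighbors, which together with the endpoints form the required induced $C_4$. The spoke $w_0^i w_\ell^i$ has common neighbors $w_{\ell-1}^i, w_{\ell+1}^i$, non-adjacent in $C_j$ for $j \geq 5$. The outer-cycle edge $w_\ell^i w_{\ell+1}^i$ has the center $w_0^i$ as one common neighbor and, since $k \geq 2$, at least one zigzag common neighbor (e.g., $w_{\ell+1}^{i^*}$ for some $i^* > i$), which is non-adjacent to the center. A zigzag edge between wheels $i$ and $i^*$ admits a common neighbor in each wheel; these lie in different wheels with positions not matching the zigzag formulas, so they are non-adjacent. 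For (B), I classify the non-edges as center-center, center-outer of another wheel, same-wheel outer pair at cycle distance $d \geq 2$, or different-wheel outer pair with no zigzag. In each case I construct the $C_4$ directly; the critical case is the same-wheel pair $w_\ell^i, w_m^i$, where the outer-cycle path between them, combined with the new edge, gives an induced $C_4$ precisely when $d \leq 3$. Since $j \leq 7$, every outer non-adjacent pair has cycle distance at most $\lfloor j/2 \rfloor \leq 3$, so this always succeeds.

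The main obstacle is the case-analysis in (A), particularly because the zigzag formulas are asymmetric in $(i, i^*)$ and so each configuration splits further on the relative order of the involved wheel indices; one must track outer positions modulo $j$ consistently throughout. The hypothesis $j \leq 7$ is essential for (B): for $j \geq 8$ one can check that adding certain same-wheel non-edges at cycle distance $4$ fails to produce an induced $C_4$, so the bound cannot be weakened.
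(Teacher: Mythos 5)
Your overall strategy (show $C_4$-freeness by analyzing how the four vertices distribute among wheels; show every existing edge is the lone diagonal of a $C_4$ so that deletion works; classify non-edges for addition) is essentially the paper's approach, and the freeness and deletion parts are sound as sketched. As a minor point, your labels (B) and (C) are swapped between the plan and the body — the paragraph exhibiting two non-adjacent common neighbors of an \emph{edge} is the deletion argument, and the paragraph classifying \emph{non-edges} is the addition argument — but that is cosmetic.

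There is, however, a genuine gap in the addition case you yourself flag as critical. For a same-wheel pair $w_\ell^i, w_m^i$ at outer-cycle distance $d$, the cycle formed by an arc of the outer cycle together with the new chord has length (arc length)$+1$, so it is a $C_4$ only when some arc between the two vertices has length exactly $3$. For $d=2$ this means you need the \emph{other} arc to have length $3$, i.e.\ $j=5$. When $j\in\{6,7\}$ and $d=2$, the short arc gives a triangle and the long arc gives a $C_5$ or $C_6$, so your claim that the within-wheel construction ``always succeeds'' for $d\le 3$ is false precisely in the cases $(j,d)\in\{(6,2),(7,2)\}$. These cases really do require leaving the wheel: the paper handles them by using the zigzag edges into a second wheel, e.g.\ for the chord $w_i^m w_{i+2}^m$ it exhibits the induced four-cycle $w_i^m\, w_{i+1}^\ell\, w_{i+2}^\ell\, w_{i+2}^m$ for a suitable wheel $\ell\neq m$ (which exists since $k\ge 2$). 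Your argument needs this additional construction; without it, addition of a distance-$2$ chord in a $6$- or $7$-wheel is not covered. Relatedly, your closing remark that $j\le 7$ is needed because distance-$4$ chords fail for $j\ge 8$ is the right intuition, but the real constraint your proof must respect is that distance-$2$ chords already escape the purely within-wheel argument once $j\ge 6$.
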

\begin{proof}
We first show that $I_j^k$ does not contain an induced $C_4$.  Suppose to the contrary that it does.  Since a single wheel does not contain an induced $C_4$, this $C_4$ must contain vertices from at least two different wheels.  Suppose that $w_0^p$ is in this $C_4$.  Recall that $w_0^p$ is the center of wheel $W^p$.  Then, this $C_4$ must contain $w_r^p$ and $w_s^p$ such that $|s - r| \ge 2$.  However, since $|s - r| \ge 2$, $w_r^p$ and $w_s^p$ contain no common neighbors outside of $W^p$.  Thus, all four vertices of this induced $C_4$ must be inside of $W^p$, a contradiction.  So our induced $C_4$ contains no centers of wheels.

If this $C_4$ contains exactly three vertices from a single $W^p$, then they must be consecutive along their cycle.  That is, $C_4$ contains $w_s^p, w_{s+1}^p$, and $w_{s+2}^p$.  However, as above, $w_s^p$ and $w_{s+2}^p$ have no common neighbors outside of $W^p$.  Thus, our induced $C_4$ contains at most two vertices from each $W^p$.

If this $C_4$ contains exactly two vertices from a single $W^p$, then by the same arguments used above, they must be adjacent in $W^p$, say $w_s^p$ and $w_{s+1}^p$.  No vertex of the form $w_s^q$, with $q < p$, or $w_{s+1}^r$, with $r > p$, can be in our $C_4$, as either produces a triangle with $w_s^p$ and $w_{s+1}^p$.

Now, $w_{s+1}^p$ must have another neighbor in our $C_4$.  Suppose it is in $W^t$.  If $t > p$, then it must be $w_{s+2}^t$ by the above.  However, the only common neighbors $w_{s+2}^t$ and $w_s^p$ have are of the form $w_{s+1}^q$ where $q > p$, a contradiction.  So $t < p$, and the other neighbor of $w_{s+1}^p$ is $w_{s+1}^t$.  Again though, the only common neighbors of $w_s^p$ and $w_{s+1}^t$ are either of the form $w_{s+1}^q$ where $q > p$, or $w_s^r$ where $r < p$.  In either case, we have a contradiction to the above.  Thus, our $C_4$ has exactly one vertex from each wheel.

Supose our induced $C_4$ contains the vertices $w_{t_1}^p, w_{t_2}^q, w_{t_3}^r, w_{t_4}^s$.  If $|\{t_1,t_2,t_3,t_4\}| \le 2$, then we have a triangle, a contradiction.  If $|\{t_1,t_2,t_3,t_4\}| = 4$, then some vertex is not adjacent to two of the others, a contradiction.  So $|\{t_1,t_2,t_3,t_4\}| = 3$, and two vertices have the same subscript.  We may assume that it is $w_{t_1}^p, w_{t_2}^q = w_{t_1}^q$, and that $p < q$.  Then, $w_{t_1}^p$ must have a neighbor not adjacent to $w_{t_1}^q$ in this $C_4$, say it is $w_{t_3}^r$.  However, in order for this to be possible, we must have $t_3 = t_1 + 1$ and $p < r < q$.  Thus, $w_{t_4}^s$ is adjacent to both $w_{t_1+1}^r$ and $w_{t_1}^q$.  However, since $t_4$ must be distinct from both $t_1$ and $t_1 + 1$, this cannot happen, a contradiction.  So $I_j^k$ is $C_4$-free. 

By inspection we see that $I_j^k$ has the property that every edge is the lone diagonal of a $C_4$.  Thus, removing any edge results in an induced $C_4$.  So we only need to consider adding edges.  Adding an edge within one wheel (say $W^m$) is simply adding a chord $w_i^m w_p^m$ to a 5-, 6-, or 7-cycle. If $p \neq i+2$ or $j = 5$, then this chord creates an induced 4-cycle. If $p = i+2$ and $j = 6$ or $j =7$, then if $m \neq k$, $w_i^m w_{i+1}^\ell w_{i+2}^\ell w_{i+2}^m w_i^1$ is an induced 4-cycle, where $\ell > m$, and if $m = k$, then $w_i^m w_{i+1}^m w_{i+1}^\ell w_i^\ell$ is an induced $C_4$. 

Now suppose we add an edge between wheels, say $W^m$ and $W^\ell$, where we may assume $m < \ell$. If the new edge is between the centers of these wheels, that is, $w_0^m w_0^\ell$, then $w_0^m w_0^\ell w_1^\ell w_1^m w_0^m$ is an induced $C_4$. If it is from the center of $W_m$ to a vertex on the cycle of $W^\ell$, say $w_i^\ell$, then $w_0^m w_i^\ell w_{i+1}^\ell w_{i+1}^m w_0^m$ is an induced $C_4$; a similar cycle is also created if the new edge is $w_0^\ell w_i^m$. 
Finally, if we add an edge $w_i^m w_p^\ell$, note that $w_i^m$ is not adjacent to at least one of $w_p^m$ and $w_{p-1}^m$; label this vertex $u$. Since $u$ is adjacent to $w_{p}^\ell$, the vertices $w_0^m, w_i^m, w_p^\ell$, and $u$ induce a $C_4$. 
\end{proof}

Proposition \ref{prop:C4indsat} implies that for many values of $n$, $\indsat{n}{C_4}=0$. In fact, this is the case for $n \geq 12$. To show this, we use the following proposition regarding $kK_2$.  While we only employ the proposition in the case $k = 2$, the more general statement which we present is not difficult.

\begin{prop}\label{twins matching}
Let $s:=(s_1,\dots, s_n)$ be a sequence of positive integers.  Let $G$ be a graph with vertex set $\{v_1,\ldots v_n\}$, and let $G_s$ be the graph obtained from $G$ by replacing each vertex $v_i$ with an independent set of order $s_i$ and each edge with a complete bipartite graph between the corresponding independent sets. 
For $k \ge 2$, $G$ is $kK_2$-induced-saturated if and only if $G_s$ is  $kK_2$-induced-saturated. 
\end{prop}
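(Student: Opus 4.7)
The plan is to exploit the blow-up structure by projection to and lifting from $G$. For $x \in V(G_s)$, let $\beta(x) \in V(G)$ denote the blob containing $x$; vertices of $G_s$ in the same blob are non-adjacent twins sharing identical neighborhoods. Since in an induced $kK_2$ every vertex has degree one, no induced $kK_2$ in $G_s$ may contain two vertices of the same blob---else their common neighbor would have matching-degree at least two---so any induced $kK_2$ in $G_s$ uses $2k$ vertices in $2k$ distinct blobs and projects via $\beta$ to an induced $kK_2$ in $G$. Conversely, choosing arbitrary representatives in each blob lifts an induced $kK_2$ in $G$ to one in $G_s$. Thus $G$ is $kK_2$-free if and only if $G_s$ is.

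For the saturation equivalence, three types of edge modifications of $G_s$ must be analyzed: removing an inter-blob edge, adding an inter-blob non-edge, and adding an intra-blob edge within some $B_i$ with $s_i \ge 2$. The inter-blob modifications of an edge $uw$ with $u \in B_i$ and $w \in B_j$ correspond to the modifications $\pm v_iv_j$ in $G$. In the direction $G \Rightarrow G_s$, I will show that any induced $kK_2$ in $G \pm v_iv_j$ necessarily includes both $v_i$ and $v_j$ (otherwise $G$ itself would contain an induced $kK_2$); taking $u, w$ as the representatives of $B_i, B_j$ and arbitrary representatives in the other blobs then produces the induced $kK_2$ in $G_s \pm uw$. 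In the direction $G_s \Rightarrow G$, a refined twin argument---noting that although $u$ and $w$ cease to be twins with their blob-mates after modification, same-blob pairs still cannot appear jointly in an induced $kK_2$, and $u$ and $w$ themselves must lie in any such matching since otherwise the modification would be invisible to the induced subgraph---shows that the $2k$ vertices lie in distinct blobs and project to the desired induced $kK_2$ in $G \pm v_iv_j$.

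The main obstacle is the intra-blob case, which has no direct counterpart in $G$. I will overcome it by establishing that $G - N_G[v_i]$ contains an induced $(k-1)K_2$; lifting such a copy to $G_s$ via representatives and appending the new edge $uu'$ then yields an induced $kK_2$ in $G_s + uu'$, since blobs lying in $V(G) \setminus N_G[v_i]$ send no edges to $B_i$ in $G_s$. To establish the claim, I first observe that $v_i$ cannot be a dominating vertex of $G$: otherwise, removing any edge $v_iw$ would yield an induced $kK_2$ that by a degree count must avoid $v_i$, placing an induced $kK_2$ already in $G$ and contradicting $kK_2$-freeness. Hence some $v_j \in V(G) \setminus N_G[v_i]$ exists, and applying $G$'s saturation to the non-edge $v_iv_j$ produces an induced $kK_2$ in $G + v_iv_j$ that must use the new edge; its remaining $2(k-1)$ vertices then lie in $V(G) \setminus N_G[v_i]$ and induce $(k-1)K_2$ there.
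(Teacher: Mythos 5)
Your proposal is correct and follows essentially the same route as the paper: the same-part exclusion argument for induced matchings, projection/lifting between $G$ and $G_s$ for inter-part modifications, and the observation that $v_i$ cannot dominate $G$ (so that an induced $(k-1)K_2$ avoiding $N_G[v_i]$ can be extracted from $G+v_iv_j$ and lifted alongside the new intra-part edge). Your write-up is somewhat more explicit than the paper's about why the modified endpoints must appear in any new induced matching and why same-part pairs remain excluded after a modification, but the underlying argument is identical.
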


\begin{proof}
For each vertex $v_i \in V(G)$, let $V_i$ be the independent set in $G_s$ that corresponds to it. 
We will call this collection of vertices in $G_s$ that replaces a single vertex in $G$ a \emph{part}. 

Note that 
no induced matching in $G_s$ uses two vertices from the same part, and the same holds if we add or remove a single edge from $G_s$.
We claim that if $w_i$ and $w_j$ are vertices from different parts $V_i$ and $V_j$, respectively, of $G_s$, then $G_s$ (or  $G_s+w_iw_j$ or $G_s-w_iw_j$) contains an induced matching 
if and only if $G$ (resp. $G+v_iv_j$, or $G-v_iv_j$) contains an induced matching $M$. 
Suppose $M_s$ is such an induced matching in $G_s$ (or $G_s+w_iw_j$ or $G_s-w_iw_j$). Then each vertex in $M_s$ comes from a different part of $G_s$ (resp. $G_s+w_iw_j$ or $G_s-w_iw_j$), and thus they correspond to distinct vertices in $V(G)$. 
This is an induced matching in $G$.

If $G$ (or $G+v_iv_j$ or $G-v_iv_j$) has an induced matching $M$, then when the graph is expanded, no new adjacencies have been added between the parts corresponding to the endpoints of vertices in $M$ (except for $w_iw_j$ in the case of $G+v_iv_j$).  Thus, we can find an induced matching in $G_s$ (resp. $G_s+w_iw_j$ or $G_s-w_iw_j$).  
This shows that if $G_s$ is $kK_2$-induced-saturated, then so is $G$.

To show that if $G$ is $kK_2$-induced-saturated, then so is $G_s$, it remains to consider adding edges between vertices in one part of $G_s$.  First we note that $G$ has no dominating vertex. Indeed, if $u$ is a dominating vertex, then deleting an edge incident to $u$, say $uw$, does not create an induced $2K_2$, let alone an induced $kK_2$, as $u$ dominates $N_G(w)$.  

Now, suppose we add $w_iw_i'$ to $G_s$, in the part $V_i$ corresponding to $v_i$. 
Since $v_i$ is not dominating, there exists $w$ not adjacent to $v_i$.  Since $G$ is $kK_2$-induced-saturated, $G+v_iw$ contains an induced matching $M=\{v_iw,x_2y_2,\ldots,x_ky_k\}$. Then $M_s=\{w_iw_i',X_2Y_2,\ldots,X_kY_k\}$ is an induced matching in $G_s+w_iw_i'$, where $X_j$ and $Y_j$ are vertices in the parts corresponding to $x_j$ and $y_j$, respectively.
\end{proof}

\begin{cor}
For $n \ge 12$, $\indsat{n}{C_4} = 0$.
\end{cor}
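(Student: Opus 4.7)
The plan is to combine the $C_4$-induced-saturated graphs produced by Proposition~\ref{prop:C4indsat} with complementation (Observation~\ref{thm:complement}) and the twin-expansion of Proposition~\ref{twins matching}. For $n = 12$, the icosahedron $I_5^2$ is $C_4$-induced-saturated by Proposition~\ref{prop:C4indsat} (take $j=5$, $k=2$), which directly gives $\indsat{12}{C_4} = 0$.

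For $n \ge 13$, I would first pass to the complement. Since $I_5^2$ is $C_4$-induced-saturated on $12$ vertices and $\overline{C_4} = 2K_2$, Observation~\ref{thm:complement} shows that $\overline{I_5^2}$ is $2K_2$-induced-saturated on $12$ vertices. Label its vertices $v_1, \ldots, v_{12}$ and apply Proposition~\ref{twins matching} with the sequence $s = (n-11,\, 1,\, 1,\, \ldots,\, 1)$: the hypothesis $s_i \ge 1$ is satisfied because $n - 11 \ge 1$ for $n \ge 12$, and the resulting graph $(\overline{I_5^2})_s$ has $n$ vertices and is $2K_2$-induced-saturated by the proposition (with $k=2$). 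Taking the complement once more and applying Observation~\ref{thm:complement} in the opposite direction produces a $C_4$-induced-saturated graph on $n$ vertices, so $\indsat{n}{C_4} = 0$.

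No real obstacle is anticipated; the proof is a short chain of previously established results. The only piece of bookkeeping is to check that one has the right kind of blow-up: Proposition~\ref{twins matching} expands vertices into independent sets, and since complementation swaps independent sets with cliques and edges with non-edges, expanding $\overline{I_5^2}$ this way and complementing corresponds to blowing up a single vertex of the icosahedron into a clique of order $n-11$, fully joined to that vertex's former neighbors. Either viewpoint yields the desired $C_4$-induced-saturated graph on $n$ vertices for every $n \ge 12$.
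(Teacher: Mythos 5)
Your proposal is correct and follows essentially the same route as the paper: the paper likewise starts from $I_5^2$ and uses Observation~\ref{thm:complement} together with Proposition~\ref{twins matching} (the $k=2$ case) to blow a single vertex up into a clique joined to its former neighborhood, yielding $C_4$-induced-saturated graphs for every $n\ge 12$. Your explicit bookkeeping with the sequence $s=(n-11,1,\ldots,1)$ and the double complementation is exactly what the paper's phrase ``replace a single vertex with a clique of any order'' amounts to.
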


\begin{proof}
Applying Observation \ref{thm:complement} to case $k = 2$ in Proposition \ref{twins matching}, allows us to begin with a graph that is $C_4$-induced-saturated, replace a single vertex with a clique of any order, replace the affected edges with complete bipartite graphs, and produce another graph that is $C_4$-induced-saturated.  Thus, beginning with $I_5^2$, applying these operations obtains $C_4$-induced-saturated graphs for all values of $n \ge 12$.
\end{proof}

For $4 \le n \le 10$, a computer search showed $\indsat{n}{C_4} > 0$.  At this time, whether $\indsat{11}{C_4}$ is zero or not, is yet unknown.  We now turn our attention to $\sis{n}{C_4}$.

\begin{thm}\label{prop:C4}
For sufficiently large $n$, $(5/2)n \leq \sis{n}{C_4}\leq (7/64)n^2+o(n)$.
\end{thm}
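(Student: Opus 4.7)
I would use Construction \ref{gen gen icos} with $j = 7$. By Proposition \ref{prop:C4indsat} the graph $I_7^k$ is $C_4$-induced-saturated for every $k \ge 2$, and a direct count gives $|V(I_7^k)| = 8k$ together with $|E(I_7^k)| = 7k(k+1)$ (namely $14$ edges per wheel plus $14$ more between each of the $\binom{k}{2}$ pairs of wheels). For arbitrary $n$, write $n = 8k + r$ with $k = \lfloor n/8 \rfloor$ and $0 \le r \le 7$. Proposition \ref{twins matching} combined with Observation \ref{thm:complement} implies that blowing up a single vertex of a $C_4$-induced-saturated graph into a clique preserves the property; applying this to a wheel center (of degree $7$) of $I_7^k$ with a clique of size $r + 1$ adds only $O(1)$ edges, producing a $C_4$-induced-saturated graph on $n$ vertices with $7k(k+1) + O(1) = \tfrac{7}{64}n^2 + O(n)$ edges.

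\noindent\textbf{Lower bound.} Let $G$ be any $C_4$-induced-saturated graph on $n$ vertices. I claim that $\delta(G) \ge 5$, which immediately yields $2e(G) = \sum_v \deg(v) \ge 5n$ and hence $e(G) \ge \tfrac{5}{2}n$. I use two standard consequences of induced saturation together with the fact that $G$ itself contains no induced $C_4$. First, for every edge $uv$, the induced $C_4$ in $G - uv$ cannot already be induced in $G$, so it must place $u$ and $v$ at opposite corners; equivalently, every edge $uv$ lies in two triangles $uvx, uvy$ with $xy \notin E(G)$. Second, $G$ is $C_4$-induced-free. A short case analysis on the edge-deletion condition rules out vertices of degree $0, 1, 2,$ and $3$, giving $\delta(G) \ge 4$.

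Suppose now for contradiction that some vertex $v$ has $\deg(v) = 4$ with $N(v) = \{u_1, u_2, u_3, u_4\}$. Applying the edge-deletion condition to each $vu_i$ forces $u_i$ to have at least two non-adjacent neighbors inside $N(v) \setminus \{u_i\}$. Enumerating the eleven isomorphism types of $4$-vertex graph for $G[N(v)]$, exactly one survives this constraint for all four $u_i$ simultaneously, namely $G[N(v)] \cong C_4$. But then $N(v)$ itself is an induced $C_4$ in $G$, contradicting $C_4$-freeness. Hence no degree-$4$ vertex exists, $\delta(G) \ge 5$, and the degree-sum formula delivers the bound.

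The main obstacle, modest as it is, is this finite case analysis for $G[N(v)]$ when $\deg(v) = 4$: one must rule out each of $K_4$, $K_4 - e$, the paw, and the graphs with fewer than four edges (in each case some $u_i$ has at most one neighbor in $N(v) \setminus \{u_i\}$, or two neighbors that happen to be adjacent), leaving $C_4$ as the sole survivor. Once that verification is complete, the $C_4$-freeness of $G$ converts $G[N(v)] \cong C_4$ into an immediate contradiction and the lower bound follows in one line.
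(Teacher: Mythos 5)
Your proof is correct, and the upper bound is essentially the paper's: both take $I_7^{\lfloor n/8\rfloor}$ and absorb the remainder $r\le 7$ by blowing wheel centers up into cliques, justified by Proposition~\ref{twins matching} together with Observation~\ref{thm:complement} (the paper expands $r$ centers into copies of $K_2$, you expand one center into $K_{r+1}$; either way the cost is $O(1)$ edges and the count is $7k(k+1)+O(1)=\tfrac{7}{64}n^2+O(n)$, which is also all the paper itself actually proves, despite the $o(n)$ in the statement). The lower bound reaches the same conclusion $\delta(G)\ge 5$ from the same key observation --- deleting $uv$ forces $u$ and $v$ to be opposite corners of the new induced $C_4$, so every edge's endpoints have two nonadjacent common neighbors --- but you finish differently. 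The paper argues directly: starting from $v_1,v_2,v_3\in N(x)$ with $v_1v_3\notin E(G)$, it deletes $xv_1$ and then $xv_3$ and tracks which new common neighbors must appear, concluding $\deg(x)\ge 5$ without ever isolating the degree-four case. You instead rule out degrees $0$ through $3$ (note that degree $0$ needs the edge-\emph{addition} condition rather than deletion: a vertex of degree one in $G+e$ cannot lie on an induced $C_4$), and then observe that a degree-four vertex $v$ would force every vertex of the four-vertex graph $G[N(v)]$ to have two mutually nonadjacent neighbors inside it --- a condition satisfied only by $C_4$ among the eleven isomorphism types --- contradicting the $C_4$-freeness of $G$. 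Your finite check is easy to verify (the pendant of the paw, the endpoints of $P_4$, and the degree-two vertices of $K_4-e$ all fail), and it has the small aesthetic advantage of invoking the $C_4$-freeness of $G$ itself to close the argument, whereas the paper's version avoids the enumeration. Both arguments are complete.
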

\begin{proof}
To prove the lower bound we show that $\delta(G) \ge 5$.
Suppose $G$ is a $C_4$-induced-saturated graph. 
Let $x \in V(G)$, and let $H:=G[N(x)]$.  Since deleting any edge produces an induced $C_4$, every edge is the diagonal of a $C_4$ and $\deg(x) \ge 3$.  In particular, there exists $v_1,v_2,v_3 \in V(H)$ such that $v_1v_3$ is not an edge, but $v_1v_2$ and $v_2v_3$ are edges.  Now, $G - xv_1$ contains an induced $C_4$ that contains both $x$ and $v_1$, but not $v_3$.  If $v_2$ is not in this $C_4$, then there exists two other vertices distinct from $v_1,v_2,v_3$ in $H$.  Thus, $\deg(x) \ge 5$.  If $v_2$ is in this $C_4$, then there exists $v_4 \in V(H)$  distinct from $v_1,v_2,v_3$ such that $v_1v_4$ is an edge, but $v_2v_4$ is not.  By a similar argument, considering $G - xv_3$ gives at least one additional vertex in $H$ distinct from $v_1,v_2,v_3,v_4$.  So in any case, $\deg(x) \ge 5$, and as $x$ was arbitrary, $\delta(G) \ge 5$.  Thus, provided $n \ge 12$, $\sis{n}{C_4} \ge (5/2)n$.


To prove the upper bound, we choose $n \geq 56$ and create a graph $G$ of order $n$. Let $r \equiv n \mod 8$, where $0 \le r \le 7$.  Set $k=\lfloor n/8 \rfloor$ so that $k \ge r$ and $n(I_7^k)=8k$.  If $r=0$, choose $G=I_7^k$. If $r>0$, we create $G$ by adding $r$ vertices to $I_7^k$. Recall, as discussed after Proposition~\ref{twins matching},
by replacing the vertices of $I_7^k$ with cliques, and its edges with complete bipartite graphs,
we preserve the property of being $C_4$-induced-saturated.
Accordingly, using the notation of Construction~\ref{gen gen icos}, we replace $w_0^1,\ldots, w_0^r$ with copies of $K_2$ and make each new vertex adjacent to the neighborhood of the vertex it replaces.

Now we determine $e(G)$. The first $r$ wheels have $22$ edges, and the rest have $14$. Between any two wheels there are $14$ edges. So $e(G)=14\left[{k \choose 2}+k\right]+8r$. Since
$r \in [0,7]$ and $k=\lfloor n/8 \rfloor$, $e(G) \leq \frac{7}{64}n^2+\frac{7}{8}n+56$.

\end{proof}

\subsection{Matchings}
Another graph that is $C_4$-induced-saturated is the join $I_5^2 \vee K_{n-12}$. 
Observation \ref{thm:complement} implies that the complement of this graph is $2K_2$-induced-saturated. We can further generalize this to get a $kK_2$-induced-saturated graph for any $k\geq 2$. 

\begin{prop}\label{clm:match}
Let $\overline{I_5^2}$ be the complement of the icosahedron. For fixed $k$ and $n \geq 12(k-1)$, the graph $(k-1)\overline{I_5^2} + (n-12(k-1))K_1$ is $kK_2$-induced-saturated. Thus, for $n \geq 12(k-1)$, $\indsat{n}{kK_2}=0$.
\end{prop}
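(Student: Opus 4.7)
The plan is to write $G := (k-1)\overline{I_5^2} + (n - 12(k-1))K_1$ as the disjoint union of components $F_1,\ldots,F_{k-1}$, each isomorphic to $\overline{I_5^2}$, together with some number of isolated vertices. First I would observe that $G$ is $kK_2$-free: because $\overline{I_5^2}$ is $2K_2$-induced-saturated (by Observation~\ref{thm:complement} applied to Proposition~\ref{prop:C4indsat} with $j=5$, $k=2$), each $F_i$ contains no induced $2K_2$. Consequently, any induced matching in $G$ has at most one edge in each $F_i$ and no edges at isolated vertices, so its size is at most $k-1$.

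Next I would establish a small structural fact: for every vertex $u$ of $\overline{I_5^2}$, the subgraph $\overline{I_5^2} - N[u]$ contains an edge. This is immediate from the geometry of the icosahedron — the five non-neighbors of $u$ in $I_5^2$ induce a $C_5$, so in the complement they induce $\overline{C_5} = C_5$, which has edges. With this in hand, the saturation verification splits into two easy pieces and a few cases. For removing an edge $e$: $e$ must lie inside some $F_i$, and since $F_i$ is $2K_2$-induced-saturated, $F_i - e$ contains an induced $2K_2$; combining that with any single edge from each other $F_j$ yields an induced $kK_2$ in $G-e$.

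For adding an edge $e = uv \notin E(G)$, I would distinguish four cases. (A) If $u,v$ lie in a common $F_i$, the saturation of $F_i$ again supplies an induced $2K_2$ in $F_i + e$, and we adjoin one edge from each other $F_j$. (B) If $u \in F_i$ and $v \in F_j$ with $i \neq j$, use the geometric fact to pick edges $e_u \subseteq V(F_i)\setminus N[u]$ and $e_v \subseteq V(F_j)\setminus N[v]$; then $\{uv, e_u, e_v\}$ is an induced $3K_2$ in $G+uv$, and adjoining one edge from each remaining $F_\ell$ completes an induced $kK_2$. (C) If $u \in F_i$ and $v$ is isolated, pick $e_u \subseteq V(F_i)\setminus N[u]$ plus one edge from each other $F_j$. (D) If $u,v$ are both isolated, take $uv$ together with one edge from each $F_j$. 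The main obstacle is Case~(B): one must verify that $e_u, e_v$, and $uv$ actually form an induced $3K_2$ after the addition. This works precisely because $G+uv$ has no edges between distinct $F_\ell$'s except the newly added $uv$; the condition that $e_u$ avoids $N[u]$ (and $e_v$ avoids $N[v]$) then guarantees that no extra edges connect the three chosen $K_2$'s. This also explains why the geometric fact about $\overline{I_5^2} - N[u]$ containing an edge — rather than just being nonempty — is exactly what is needed.
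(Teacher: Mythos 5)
Your proof is correct and follows essentially the same route as the paper's: it rests on the same two facts, namely that each copy of $\overline{I_5^2}$ is $2K_2$-induced-saturated and that every vertex $v$ of $\overline{I_5^2}$ lies in an induced $K_2+K_1$ with $v$ as the isolated vertex (your ``geometric fact''), and then assembles an induced $kK_2$ by taking one edge from each remaining component. One small wording slip: the vertex set of $\overline{I_5^2}-N[u]$ consists of the five \emph{neighbors} of $u$ in the icosahedron $I_5^2$ (its non-neighbors there number six), but these do induce a $C_5$ in both $I_5^2$ and its complement, so your conclusion stands.
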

\begin{proof}
By Proposition~\ref{prop:C4indsat} and Observation~\ref{thm:complement}, the complement of an icosahedron is $2K_2$-induced-saturated. Let $G$ denote $(k-1)\bar{I_5^2} + (n - 12(k - 1))K_1$.  Clearly, $G$ contains $(k-1)K_2$ as an induced subgraph, but no induced $kK_2$.  If we add or delete any edge inside a component, or add an edge among the isolates, we create an induced $kK_2$. Note that every vertex $v$ in $\bar{I_5^2}$ is in an induced copy of $K_2+K_1$ where $v$ is the isolate.  Thus, adding any edge with an endpoint in a copy of $\bar{I_5^2}$ creates an induced $kK_2$.
\end{proof}


\begin{cor} For $n\geq 12(k-1)$, $\sis{n}{kK_2} \leq 36(k-1)$.
\end{cor}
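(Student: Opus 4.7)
The plan is to simply count the edges in the construction given by Proposition~\ref{clm:match} and invoke the definition of $\sis{n}{kK_2}$. By Proposition~\ref{clm:match}, for $n \geq 12(k-1)$, the graph $G = (k-1)\overline{I_5^2} + (n-12(k-1))K_1$ is $kK_2$-induced-saturated, so $\sis{n}{kK_2} \leq e(G)$. Since $\sis{n}{kK_2}$ is the minimum over all $kK_2$-induced-saturated graphs on $n$ vertices, it suffices to show $e(G) = 36(k-1)$.

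First I will count the edges of a single copy of $\overline{I_5^2}$. The icosahedron $I_5^2$ has $12$ vertices and is $5$-regular (each vertex of a wheel $W^i$ in Construction~\ref{gen gen icos} with $j=5, k=2$ has degree $5$: the center has $5$ neighbors on its cycle, and each outer-cycle vertex has $2$ cycle-neighbors, $1$ center-neighbor, and $2$ neighbors in the other wheel via the zig-zag). Thus $e(I_5^2) = \tfrac{5 \cdot 12}{2} = 30$, and consequently $e(\overline{I_5^2}) = \binom{12}{2} - 30 = 66 - 30 = 36$.

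Now $G$ consists of $k-1$ disjoint copies of $\overline{I_5^2}$ together with $n - 12(k-1)$ isolated vertices, which contribute no edges. Hence
\[
e(G) = (k-1) \cdot 36 = 36(k-1),
\]
yielding $\sis{n}{kK_2} \leq 36(k-1)$, as desired.

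There is no real obstacle here; the only small item worth verifying carefully is the edge count of the icosahedron, but this is a standard fact and also follows directly from Construction~\ref{gen gen icos} with $j=5$, $k=2$ by counting spokes, outer-cycle edges, and zig-zag edges.
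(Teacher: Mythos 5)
Your proof is correct and is exactly the computation the paper intends (the paper states the corollary without proof, as an immediate edge count of the construction in Proposition~\ref{clm:match}): the icosahedron is $5$-regular on $12$ vertices, so $e(\overline{I_5^2}) = \binom{12}{2} - 30 = 36$, and the $k-1$ disjoint copies plus isolated vertices give $36(k-1)$ edges.
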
 
In particular, for fixed $k$, $\sis{n}{kK_2}$ is constant.

%
%

\section{Other Cycles and Generalizations of Cycles}

In this section we provide a construction proving that odd cycles also have induced saturation number zero for $n$ sufficiently large.  As it is already known that  $\indsat{n}{C_3} = \sat{n}{C_3}$ \cite{MS}, we only consider odd cycles of length at least five.  Additionally,  this construction is also $H$-induced-saturated when $H$ is a modification of an even cycle as described below.

Let $C_{2k}'$ denote a cycle of length $2k$ with a pendant vertex, and $\hat C_{2k}$ denote an even cycle with a chord between two vertices at distance 2 from each other (sometimes called a triangle chord or hop).

For a given $k$ and $n \geq (k+1)^2+2$, we can write $n$ as $(k+1)t-s$ where $t$ and $s$ are integers with $t \geq k+2$ and $0 \leq s \leq t-3$. 
In particular, we choose $t=\cl{\frac{n}{k+1}}$. 
Using this expression for $n$, we give the following construction.

\begin{cons}\label{cons:cycles} 
For $k \geq 3$ and $n \geq (k+1)^2+2$, let $n=(k+1)t-s$, where $t=\cl{\frac{n}{k+1}} \ge k+2$ and $0 \leq s \le t-3$. 
Let $G_{n,k}$ be formed from the Cartesian product $K_{k+1} \cartprod K_t$ by removing $s$ vertices from one copy of $K_t$. 
\end{cons}

\begin{prop}\label{indsat_cycles}
If $H \in \{C_{2k-1}, C_{2k}', \hat C_{2k}\}$, then the graph $G_{n,k}$ in Construction \ref{cons:cycles} is $H$-induced-saturated.
\end{prop}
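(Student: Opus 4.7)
The plan is to leverage the grid-like structure of $G_{n,k}$ via a single alternation observation, then build explicit induced copies of each $H$ when any edge is toggled. Coordinatize $V(G_{n,k})\subseteq [k+1]\times[t]$ so that $(i,j)$ and $(i',j')$ are adjacent iff they agree in exactly one coordinate, with $s$ cells deleted from a single row. Three pairwise-adjacent vertices must share a row or column, so every induced path or cycle of length at least $3$ alternates between ``row-edges'' and ``column-edges''. Three consequences: (i) induced cycles of length $\geq 4$ have even length and use equally many distinct rows and columns; (ii) each row or column meeting such a cycle contains exactly two consecutive cycle vertices; (iii) endpoints of an induced alternating path of length $\geq 2$ differ in both coordinates. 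From these, $G_{n,k}$ contains no induced $C_{2k-1}$ (odd length), no induced $\hat{C}_{2k}$ (a triangle chord would force two cycle-consecutive edges to share the same coordinate type), and no induced $C_{2k}'$ (by (ii) a pendant sharing a coordinate with its only cycle-neighbor is also adjacent to its cycle-partner in that row or column).

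For edge addition, let $u=(i,j)$ and $v=(i',j')$ with $i\neq i'$ and $j\neq j'$, and work in $G_{n,k}+uv$. For $C_{2k-1}$ I would take an alternating induced path in $G_{n,k}$ of length $2k-2$ from $v$ to $u$ and close it with $uv$. For $\hat{C}_{2k}$ I would use the distance-$2$ triangle $u,(i',j),v$ together with an alternating induced length-$(2k-2)$ path from $v$ back to $u$ chosen so that $(i',j)$ has no further cycle-neighbor, which makes $uv$ the unique triangle chord. For $C_{2k}'$ I would take an alternating induced length-$(2k-1)$ path from $v$ to $u$ closed by $uv$ to form an induced $C_{2k}$, then attach a pendant in a row or column meeting the cycle at exactly one vertex. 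Each construction uses $k$ or $k+1$ distinct rows and distinct columns from the $k+1$ available rows and $t\geq k+2$ columns, with the free interior indices available to avoid the at most $s\leq t-3$ deleted cells.

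For edge deletion, let $u=(i,j)$ and $v=(i,j')$ share a row. The same templates apply with $u$ and $v$ placed as two cycle vertices at distance $2$ whose chord is the removed edge $uv$: for $C_{2k-1}$, take the length-$2$ arc $u,(i,c),v$ through row $i$ together with a length-$(2k-3)$ alternating path from $v$ to $u$ that is induced in $G_{n,k}-uv$ precisely because its only chord in $G_{n,k}$ is $uv$; for $\hat{C}_{2k}$ and $C_{2k}'$, analogous modifications let $uv$ play the role of the triangle chord or of a cycle edge, respectively.

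The argument is conceptually clean but requires careful bookkeeping across the six sub-cases to ensure the constructed cycle has no unintended chords (especially so that the triangle vertex of $\hat{C}_{2k}$ and the pendant of $C_{2k}'$ are adjacent to exactly the intended cycle vertices) and that each chosen vertex lies in $G_{n,k}$. The bounds $t\geq k+2$ together with $s\leq t-3$ are exactly what permit these simultaneous choices; the pendant construction for $C_{2k}'$ is the step I expect to be the most delicate, since it simultaneously requires a cycle vertex in a ``private'' row or column, an unused coordinate in the opposite direction, and avoidance of the deleted cells.
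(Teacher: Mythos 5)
Your first half---that $G_{n,k}$ contains no induced copy of any $H$---is sound, and is a mild repackaging of the paper's argument: the paper notes that a triangle-free induced subgraph meets each clique (row or column) in at most two vertices and then runs a parity count, while you derive the same consequences from the alternation of row-edges and column-edges along induced paths and cycles. Your three consequences and the deductions from them (induced cycles of length at least four are even; the pendant of a $C_{2k}'$ would inherit a second cycle neighbour; a triangle chord breaks alternation) are all correct.

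The gap is in the second half, which is where essentially all of the content of this proposition lives. For each of the six (add/delete)$\times$(target) combinations you assert that a suitable alternating induced path of the right length and parity exists, can be closed through the toggled pair, and admits the extra triangle vertex or pendant with no unintended adjacencies, all while avoiding the $s$ deleted cells---but you never exhibit such a path or verify any of these properties. That existence claim \emph{is} the proposition; the bounds $t\ge k+2$ and $s\le t-3$ do not by themselves ``permit these simultaneous choices,'' they only make the counting work once an explicit template is written down. Concretely, the paper must (1) give explicit vertex sets such as $T=\{v_{i,i+1},v_{i+1,i+1}:1\le i\le k-2\}\cup\{v_{1,1},v_{k+1,k+1}\}$ and check chordlessness; (2) case-split on whether the endpoints of the toggled edge lie in the depleted clique $K_t^*$, permuting rows and columns so the needed cells survive; and (3) for the $C_{2k}'$ pendant, locate one or two vertices of $K_t^*$ outside two prescribed columns using $t-s\ge 3$. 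You flag the pendant case yourself as the delicate one and leave it, along with the chord-freeness checks, as ``careful bookkeeping.'' As written this is a correct plan for a proof rather than a proof; the missing constructions need to be supplied and verified before the argument is complete.
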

\begin{proof}
Let $G_{n,k}$ be as described in Construction \ref{cons:cycles}.  We first show that $G_{n,k}$ is $H$-free for $H \in \{C_{2k-1}, C_{2k}', \hat C_{2k}\}$.  Any induced subgraph of $G_{n,k}$ that is triangle-free has at most two vertices from any copy of $K_{k+1}$ or $K_t$. 
Since $2k-1$ is odd, an induced $C_{2k-1}$ would contain precisely one vertex $v$ from some copy of $K_{k+1}$. Then the neighbors of $v$ must be in the same copy of $K_t$, which means they form a triangle. Thus, $G_{n,k}$ has no induced odd cycle larger than a triangle.  Since $\hat C_{2k}$ contains $C_{2k-1}$ as an induced subgraph, neither $C_{2k-1}$ nor $\hat C_{2k}$ are induced subgraphs of $G_{n,k}$.  Similarly, if $G_{n,k}$ contained an induced $C'_{2k}$, then because $C'_{2k}$ is triangle-free with an odd number of vertices, then there would be one copy of $K_t$ that contains precisely one vertex $v$ of the subgraph. If $v$ is on the cycle, it has at least two neighbors, but these can only be other copies of $v$, forming a triangle in some copy of $K_{k+1}$. If $v$ is the pendant vertex, suppose it has neighbor $u$ on the cycle. Then $u$ has some neighbor $u'$ in a different copy of $K_t$ from itself, and $u,u'$, and $v$ are all in one copy of $K_{k+1}$, forming a triangle. Thus, $G_{n,k}$ has no induced $C'_{2k}$.

In the remainder of this proof we view $K_{k+1} \cartprod K_t$ as a $k+1$ by $t$ grid with vertices $v_{i,j}$ for $1 \le i \le k+1$ and $1 \le j \le t$, and the graph induced by the vertices a row (or column) is a clique.  In particular, we have $k+1$ columns and $t$ rows.  We then form $G_{n,k}$ by removing $s$ vertices from a copy of $K_t$, and let $K_t^*$ denote the clique induced by the remaining $t - s$ vertices.   
Since $s \le t-3$ and $k \ge 3$, $G_{n,k}$ has at least three vertices in each row and column, and at least three rows with at least 4 vertices.  

Let $G_{n,k}'$ be obtained from $G_{n,k}$ by adding an edge.  Up to relabeling, we may assume that $v_{1,1}v_{k+1,k+1}$ is the added edge, and $v_{1,1}$ is not in $K_t^*$.  Suppose first that $v_{k+1,k+1}$ is in $K_t^*$ (i.e., the $(k+1)$st column induces $K_t^*$).  Let $T := \{v_{i,i+1}, v_{i+1,i+1}: 1 \le i \le k-2\} \cup \{v_{1,1}, v_{k+1,k+1}\}$.  Then $T \cup \{v_{k-1,k+1}\}$ induces $C_{2k-1}$, $T \cup  \{v_{k-1,k},v_{k-1,k+1}\}$ induces $\hat C_{2k}$, and $T \cup \{v_{k-1,k}, v_{k,k}, v_{k,1}\}$ induces $C_{2k}'$.

If $v_{k+1,k+1}$ is not in $K_t^*$, then we can switch the column containing $K_t^*$ with the $k$th column and relabel the vertices appropriately.  Note that $v_{1,1}v_{k+1,k+1}$ is still the added edge as $K_t^*$ is not in the first or $(k+1)$st column.
  In this case, we find an induced $C_{2k-1}$ and $\hat C_{2k}$ from the same sets used above.  In order to find an induced $C_{2k}'$, we consider the vertices in $K_t^*$.  Since $t - s \ge 3$, $K_t^*$ contains a vertex not in the first or $(k+1)$st column.  Up to permuting rows 2 through $k$ and relabeling vertices, we may assume that $v_{k,k}$ is such a vertex.  If $v_{k,k+1}$ is also a vertex in $K_t^*$, then $(T\setminus\{v_{1,2}\}) \cup \{v_{k+1,2}, v_{k,k+1}, v_{k,k}, v_{k-1,k}\}$ induces $C_{2k}'$.  If not, then $K_t^*$ contains two vertices not in the first or $(k+1)$st row, and up to permuting rows 2 through $k-1$ and relabeling vertices, we may assume that $v_{k,k-1}$ is such a vertex.  Then $\{v_{i,i}, v_{i+1,i}: 2\le i \le k\} \cup \{v_{2,k+1}, v_{k+1,k+1}, v_{1,1}\}$ induces $C_{2k}'$.

Now let $G_{n,k}'$ be obtained from $G_{n,k}$ by deleting an edge.  Again, up to relabeling, we may assume that either $v_{1,1}v_{1,2}$  or $v_{1,1}v_{2,1}$ is the deleted edge.  

Suppose first that $v_{1,1}v_{1,2}$ is the deleted edge.  If this edge was in $K_t^*$, we can switch rows and relabel the vertices so that $v_{1,1}, v_{1,2}$, and $v_{1,k+1}$ are in $K_t^*$.  If not $v_{1,1}v_{1,1}$ is not in $K_t^*$, then switch the column containing $K_t^*$ with the $(k+1)$st column and relabel the vertices.  Note that $v_{1,1}v_{1,2}$ is still the deleted edge.  For $T$ as defined above, $T \cup \{v_{k-1,1}\}$ induces $C_{2k-1}$, $T \cup \{v_{k-1,1}, v_{k-1,k}\}$ induces $\hat C{2k}$, and $T \cup \{v_{k-1,k}, v_{k,k}, v_{k,k+1}\}$  induces $C_{2k}'$.  

Now suppose that $v_{1,1}v_{2,1}$ is the deleted edge, and without loss of generality,  $v_{1,1}$ is not in $K_t^*$.  If $v_{2,1}$ is in $K_t^*$, then we can switch rows and relabel vertices so that $v_{2,2}$ exists.  If $v_{2,1}$ is not in $K_t^*$, then we switch the column containing $K_t^*$ with the $(k+1)$st column.  Note that $K_t^*$ contains at least two vertices not in the first row so that up to permuting rows 2 through $k+1$ and relabeling vertices, we may assume $v_{k+1,k-1}$ and $v_{k+1,k}$ exist.  In either case, the deleted edge is still $v_{1,1}v_{2,1}$.  Let $T' := \{v_{i+1,i}, v_{i+1,i+1}: 1 \le i \le k - 2\} \cup \{v_{1,1}\}$.  Then $T \cup \{v_{1,k-1}, v_{k,1}\}$ induces $C_{2k-1}$, $T \cup \{v_{1,k-1}, v_{1,k}, v_{k,1}\}$ induces $\hat C_{2k}$, and $T \cup \{v_{k+1,k-1}, v_{k+1,k}, v_{k,k}, v_{k,1}\}$ induces $C_{2k}'$.
%
%
%
%
\end{proof}

\begin{cor}\label{cor:cycles}
For all $k \geq 3$, if $n \geq (k+1)^2+2$ and $H\in\{C_{2k-1},C_{2k}',\hat C_{2k}\}$, then $\indsat nH =0$.
\end{cor}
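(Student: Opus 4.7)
The plan is that this corollary is essentially an immediate consequence of Proposition~\ref{indsat_cycles}. Recall that $\indsat{n}{H}$ is the minimum number of gray edges in an $H$-induced-saturated trigraph on $n$ vertices; in particular, if some graph $G$ on $n$ vertices is $H$-induced-saturated (as a graph, meaning $G$ has no induced $H$ but adding or deleting any edge creates one), then the trigraph with $E_B(T) = E(G)$, $E_W(T) = \binom{V(G)}{2} \setminus E(G)$, and $E_G(T) = \emptyset$ witnesses $\indsat{n}{H} = 0$. So it suffices to exhibit such a graph $G$ on $n$ vertices for each $H \in \{C_{2k-1}, C_{2k}', \hat C_{2k}\}$.

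First, I would verify that Construction~\ref{cons:cycles} is well-defined in the stated range. Given $n \geq (k+1)^2 + 2$, set $t = \lceil n/(k+1) \rceil$ and $s = (k+1)t - n$. Since $n \geq (k+1)^2 + 2$, we have $t \geq k+2$, and by the definition of the ceiling, $0 \leq s \leq k$. The only slightly delicate point is the inequality $s \leq t - 3$ required by the construction: if $t \geq k+3$, this is immediate from $s \leq k$; if $t = k+2$, then $n \geq (k+1)^2 + 2$ forces $s \leq (k+1)(k+2) - ((k+1)^2 + 2) = k-1 = t - 3$. Hence $G_{n,k}$ is a bona fide graph on $n$ vertices satisfying the hypotheses of Proposition~\ref{indsat_cycles}.

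Having checked this, Proposition~\ref{indsat_cycles} tells us that for each $H \in \{C_{2k-1}, C_{2k}', \hat C_{2k}\}$ the graph $G_{n,k}$ is $H$-induced-saturated, and the observation of the first paragraph then gives $\indsat{n}{H} = 0$, as desired.

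The only obstacle is the parameter-verification step above, and it is purely an arithmetic check; the substantive work has already been carried out in Proposition~\ref{indsat_cycles}.
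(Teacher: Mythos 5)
Your proposal is correct and matches the paper's (implicit) argument: the paper states the corollary without proof because it follows immediately from Proposition~\ref{indsat_cycles} once the parameters $t=\lceil n/(k+1)\rceil$ and $s=(k+1)t-n$ are known to satisfy $t\ge k+2$ and $0\le s\le t-3$, a fact the paper asserts in the paragraph preceding Construction~\ref{cons:cycles}. Your arithmetic verification of that fact (including the boundary case $t=k+2$) is accurate and fills in the only detail the paper leaves to the reader.
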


In the following discussion assume $H \in \{C_{2k-1}, C_{2k}', \hat C_{2k}\}$.  Using Construction \ref{cons:cycles} we obtain an upper bound on $\sis{n}{H}$ with order of magnitude $n^2$, which is trivial.  We can improve this order of magnitude slightly in the case when $\lceil \sqrt{n}~ \rceil$ is not prime.  To do so we note that if $n$ can be written as a product of two integers $s$ and $t$ that are both at least $k$, then the graph $K_s \cartprod K_t$ is $H$-induced-saturated.

\begin{prop}
Fix $k \ge 3$ and choose $n$ such that $n^{1/4} \ge k + 1$.  For $H \in \{C_{2k-1}, C_{2k}', \hat C_{2k}\}$, if $\lceil \sqrt{n}~ \rceil$ is divisible by some $t \ge 3$, $\sis{n}{H} \le cn^{7/4} + O(n^{3/2})$ for some constant $c$.
\end{prop}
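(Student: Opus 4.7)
The plan is to construct an $H$-induced-saturated graph on $n$ vertices as a Cartesian product $K_a \cartprod K_b$ with $a,b \ge k$ and $ab$ slightly larger than $n$, then shave off the excess by the vertex-deletion technique of Construction~\ref{cons:cycles}. The divisibility hypothesis is used to ensure that one of the dimensions can be chosen of size between $n^{1/4}$ and $\sqrt{n}$, which in turn forces the other dimension to be at most $n^{3/4}$ and keeps the edge count below $n^{7/4}$.

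Let $m := \lceil \sqrt{n}\,\rceil$. By hypothesis, $m = qt$ with $t \ge 3$. After swapping the roles of $q$ and $t$ if necessary, I may assume $t \ge q$, so that $t \ge \sqrt{m} \ge n^{1/4} \ge k+1$. I will treat the interesting case $q \ge 2$ (the degenerate case $q = 1$, when $t = m$, would give an even better $O(n^{3/2})$ bound using $K_m \cartprod K_{\lceil n/m\rceil}$). Then I would set $\ell := \lceil n/t \rceil$. A direct estimate using $n \ge (qt-1)^2$ gives $\ell \ge qt - 2q \ge 2t - 4 \ge t+2$ for $q \ge 2$ and $t \ge 3$ (with minor adjustments for the smallest cases), and the hypothesis $n^{1/4} \ge k+1$ gives $\ell \ge n/t \ge \sqrt{n}-1 > k$.

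The construction is then $G := (K_t \cartprod K_\ell) - S$, where $S$ is a set of $r := t\ell - n$ vertices taken from a single copy of $K_\ell$. Since $r \le t - 1 \le \ell - 3$, this is precisely the deletion operation of Construction~\ref{cons:cycles}, applied with $(k+1, t)$ there replaced by $(t, \ell)$ here. The main obstacle is verifying that $G$ is $H$-induced-saturated; I expect this to follow from essentially the same case analysis used in the proof of Proposition~\ref{indsat_cycles}, since the argument is symmetric in the two factors of the Cartesian product and relies only on the grid structure and the requirement that both factors (and the diminished factor $K_\ell - S$) have at least $k$ vertices. Concretely, each of the cases ``add an edge within a copy of $K_t$,'' ``add an edge within a copy of $K_\ell$,'' ``add/delete a cross edge,'' and ``delete an edge from a copy of $K_\ell$ or $K_t$'' can be handled by the same induced-cycle witnesses as in Proposition~\ref{indsat_cycles}, after checking that the surviving rows/columns all have at least three elements.

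To bound $e(G)$, I would compute
\[
e(G) \;\le\; e(K_t \cartprod K_\ell) \;=\; \tfrac{1}{2}\, t\ell\,(t + \ell - 2) \;\le\; \tfrac{1}{2}(n + t)(t + \ell).
\]
Since $t \le \sqrt{n}+1$ and $t \ge n^{1/4}$, we have $\ell \le n/t + 1 \le n^{3/4} + 1$, so $t + \ell \le 2n^{3/4} + 2$. This yields $e(G) \le cn^{7/4} + O(n^{3/2})$ for a suitable constant $c$, proving the claim. The only remaining wrinkle is the borderline case where $t$ and $\ell$ are almost equal (so that $\ell \ge t+2$ is tight); this is avoided by insisting that the chosen $t$ is a \emph{proper} divisor of $m$, which is exactly the content of the hypothesis that $\lceil \sqrt{n}\,\rceil$ is not prime.
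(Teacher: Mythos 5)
Your proposal is correct and takes essentially the same approach as the paper: both form the Cartesian product of two cliques whose sizes come from the two complementary divisors of $\lceil\sqrt{n}\,\rceil$, arranged so one factor has size at least $n^{1/4}$ and the other at most roughly $n^{3/4}$, then delete the surplus vertices from a single clique as in Construction~\ref{cons:cycles} and appeal to the argument of Proposition~\ref{indsat_cycles} for induced saturation. The only differences are bookkeeping --- the paper uses $K_{\lceil\sqrt{n}\,\rceil/t}\cartprod K_{t\lceil\sqrt{n}\,\rceil}$ and deletes up to $2\sqrt{n}+1$ vertices, while you use $K_{t}\cartprod K_{\lceil n/t\rceil}$ with $t$ the larger divisor and delete at most $t-1$ --- and both give the same $n^{7/4}$ bound.
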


\begin{proof}
As noted above, the Cartesian product of two sufficiently large cliques is $H$-induced-saturated.  So, consider $G := K_{\lceil \sqrt{n}~\rceil/t} \cartprod K_{t\lceil \sqrt{n}~ \rceil}$.  Simple computation shows $n \le v(G) \le n + 2\sqrt{n} + 1$.  So, $n(G)$ can be written as $n + s$, where $0 \le s \le 2\sqrt{n} + 1 \le t\sqrt{n} - 3$, as $t \ge 3$.  Let $G'$ be obtained from $G$ by removing $s$ vertices from a single copy of $K_{3\lceil \sqrt{n}~\rceil}$ as in Construction \ref{cons:cycles}.  An argument similar to that in Proposition \ref{indsat_cycles} shows that $G'$ is $H$-induced-saturated.  Observe: $$e(G') \le t\lceil\sqrt{n}~\rceil \binom{(1/t)\lceil \sqrt{n}~\rceil}{2} + (1/t)\lceil\sqrt{n}~\rceil \binom{t\lceil \sqrt{n}~\rceil}{2} = \frac{\lceil \sqrt{n}~\rceil^2}{2}\left(\left(t+\frac{1}{t}\right)\lceil\sqrt{n}~\rceil) - 2\right).$$
Since $t$ divides $\lceil\sqrt{n}~\rceil$, $t \le \sqrt{\lceil \sqrt{n}~\rceil}\le c'n^{1/4}$ for some $c' > 1$.  Using this and $\lceil\sqrt{n}~\rceil \le \sqrt{n} + 1$ gives $e(G') \le \frac{c'}{2}n^{7/4} + O(n^{3/2})$.
\end{proof}

Considering odd cycles  points out another property of the induced saturation number. 
That is, if $\indsat{n}{H}=0$ for a particular $n$, it is not necessarily the case that $\indsat{k}{H}=0$ for all $k > n$. For example,  Construction~\ref{cons:cycles} shows $\indsat{n}{C_5}=0$ for $n = 9$ and $n \geq 12$. 
However, a computer search showed that for $n=10$ and $n=11$, we have $\indsat{n}{C_5} > 0$. 
(A $C_5$-induced-saturated trigraph on 10 vertices with one gray edge is shown in Figure \ref{fig:C5_10}, so that $\indsat{10}{C_5}=1$.) 



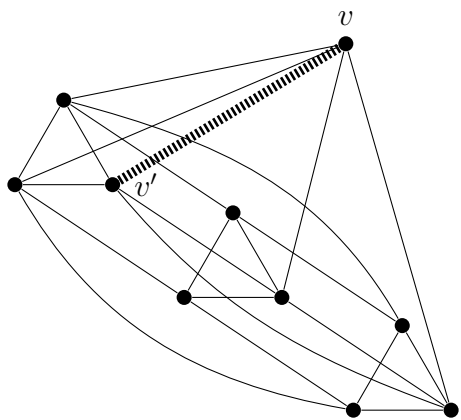
\begin{figure}[h]
\centering
\begin{tikzpicture}[scale=0.75]
\foreach \y in {1,2,3}
	{\foreach \x in {1,2,3}
	{\draw (-3*\y,2*\y)+(90+120*\x:1cm) 
	node[vx] (t\y\x){};}
	\draw (t\y1)--(t\y2)--(t\y3)--(t\y1);}
\draw (t11)--(t21)--(t31);
\draw (t31) to[out=-60, in=170] (t11);
\draw (t13)--(t23)--(t33);
\draw (t33) to[out=-15, in=120] (t13);
\draw (t32)--(t22)--(t12);
\draw (t32) to[out=-50, in=160] (t12);
\draw (-4,8) node[vx, label=above:$v$] (v){};
\draw (t31)--(v)--(t33);
\draw (t22)--(v)--(t12);
\draw (t32) node[label= right:$v'$]{};
\draw [line width=3.6pt,dash pattern=on 1pt off 1pt]  (t32) -- (v);
\end{tikzpicture}
\caption{This trigraph, with the gray edge $vv'$, is a $C_5$-induced-saturated trigraph.}
\label{fig:C5_10}
\end{figure}

%
%

\section{Families of Graphs}\label{sec:families}

In this section we extend the definition of induced saturation in \cite{MS} to families of graphs in the natural way. 

\begin{defn}
For a family $\mathcal F$ of graphs, a trigraph $T$ is \textbf{$\mathcal F$-induced-saturated} if no realization of $T$ contains any member of $\mathcal F$ as an induced subgraph, but whenever any black or white edge of $T$ is turned to gray, some member of $\mathcal F$ occurs as an induced subgraph of some realization. 

The \textbf{induced saturation number} of $\mathcal F$ with respect to $n$, written $\indsat{n}{\mathcal F}$, is the minimum number of gray edges in an $\mathcal{F}$-induced-saturated trigraph with $n$ vertices.
\end{defn}

For any family $\mathcal F$ containing all graphs on $k$ vertices, 
 $\indsat{n}{\mathcal F}={n \choose 2}$.

Construction~\ref{cons:cycles} and Proposition~\ref{indsat_cycles} demonstrate that for any family $\mathcal F$, all of whose elements are odd cycles, even cycles with a pendant, or even cycles with a triangle chord, $\indsat{n}{\mathcal F}=0$ for $n$ sufficiently large. 
However, we could have $\indsat{n}{\mathcal{F}}\neq 0$ even if there is some $G\in \mathcal{F}$ such that $\indsat{n}{G}=0$ 
as demonstrated in Proposition~\ref{threshold} below. 
One may suspect this is because of the presence of $P_4$, which has nonzero induced-saturation number, yet
it is also possible for a family $\mathcal F$ to consist of graphs that each individually have induced saturation number zero, while the induced saturation number of $\mathcal F$ is nonzero. We provide an example of this in Proposition~\ref{split}.

\begin{prop}\label{threshold}
For all $n$, $\indsat{n}{\{2K_2,P_4,C_4\}} \neq 0$.
\end{prop}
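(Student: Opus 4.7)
The plan is to invoke the Chv\'atal--Hammer characterization: a graph $G$ is $\{2K_2,P_4,C_4\}$-free if and only if $G$ is a \emph{threshold graph}, meaning it can be built from $K_1$ by iteratively appending either an isolated vertex or a vertex adjacent to everything present. An immediate consequence is that every threshold graph on $\geq 2$ vertices contains an isolated vertex or a dominating vertex. Note too that the family $\mathcal{F}:=\{2K_2,P_4,C_4\}$ is self-complementary ($\overline{2K_2}=C_4$, $\overline{P_4}=P_4$, $\overline{C_4}=2K_2$), so by Observation~\ref{thm:complement} a graph $G$ is $\mathcal{F}$-induced-saturated if and only if $\overline{G}$ is.

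Suppose toward a contradiction that $\indsat{n}{\mathcal{F}}=0$. Then some $n$-vertex graph $G$ (zero gray edges) is $\mathcal{F}$-induced-saturated: $G$ is threshold, and for every $e\in E(G)\cup\overline{E(G)}$ the graph $G\pm e$ contains a member of $\mathcal{F}$ and hence fails to be threshold. For $n\leq 3$ this is instantly impossible, since every graph on fewer than four vertices is trivially $\mathcal{F}$-free. For $n\geq 4$ I would derive a contradiction from the following claim.

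\textbf{Claim.} \emph{Every threshold graph $G$ on $n\geq 2$ vertices admits some $e\in E(G)\cup\overline{E(G)}$ such that $G\pm e$ is still threshold.}

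I would prove the claim by induction on $n$. The base case $n=2$ is trivial: the two threshold graphs $K_2$ and $\overline{K_2}$ are obtained from each other by flipping the unique pair. For the inductive step, let $G$ be threshold on $n\geq 3$ vertices. By the structural characterization, $G$ contains either an isolated or a dominating vertex; replacing $G$ by $\overline{G}$ if necessary (which preserves both hypothesis and conclusion by Observation~\ref{thm:complement}), assume $G$ has an isolated vertex $v$. Then $G-v$ is threshold on $n-1\geq 2$ vertices, and by induction there is a pair $e$ on $V(G)\setminus\{v\}$ with $(G-v)\pm e$ threshold. Adjoining an isolated vertex to a threshold graph again gives a threshold graph, so $G\pm e$ is threshold, contradicting the assumption that every edge flip of $G$ produces some forbidden induced subgraph.

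The only real obstacle is identifying the right structural input; once the Chv\'atal--Hammer characterization and the isolated-or-dominating recursive description of threshold graphs are in hand, the induction is mechanical, and Observation~\ref{thm:complement} combined with the self-complementarity of $\mathcal{F}$ lets us halve the casework.
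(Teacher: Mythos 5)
Your proof is correct, and it rests on the same pillar as the paper's: the Chv\'atal--Hammer characterization of $\{2K_2,P_4,C_4\}$-free graphs as threshold graphs, followed by the claim that every threshold graph admits an edge flip that keeps it threshold. Where you differ is in how that claim is established. The paper works with the $\{-,+\}$ string encoding and exhibits an explicit deletable edge (locating the first occurrence of a $-$ immediately followed by a $+$ and swapping them), whereas you induct on the recursive isolated-or-dominating-vertex construction, stripping off an isolated vertex (passing to the complement first if the last vertex is dominating) and lifting the flip back up. Your route is arguably cleaner: it treats additions and deletions symmetrically via self-complementarity of the family, and it painlessly covers the edgeless graph $\overline{K_n}$, a case the paper's statement of its claim (``any threshold graph with at least one edge has a deletable edge'') technically leaves to the reader. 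One small quibble: the fact you need for the ``replace $G$ by $\overline{G}$'' step is that thresholdness and the family $\{2K_2,P_4,C_4\}$ are both closed under complementation, not Observation~\ref{thm:complement} itself (which is a statement about $H$-induced-saturated trigraphs); you do state the complementation facts explicitly, so this is a mislabeled citation rather than a gap.
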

\begin{proof}
The graphs that contain no induced $2K_2,P_4,$ or $C_4$ are precisely the \emph{threshold graphs} \cite{CH}. These graphs are characterized in a second way: they are constructed by iteratively adding a vertex to a graph either as an isolate or a dominating vertex. Thus, an $n$-vertex threshold graph can be represented as a string of $n$ symbols from $\{-,+\}$ as follows:
on the vertex set $V=\{v_1,\ldots,v_n\}$, for every $i>j$, $v_iv_j$ is an edge if and only if the $i$th symbol in the string is $+$.

We claim that for any threshold graph $G$ with at least one edge, there exists $e \in E(G)$ such that $G-e$ is also threshold. Let $\pi=s_1, \ldots, s_n$ be a string of symbols from $\{-,+\}$ representing $G$. Suppose there exists $i \in [n-1]$ such that $s_i=-$ and $s_{i+1}=+$, and let $i$ be minimal with this property. Then the graph $G'=G-v_iv_{i+1}$ is represented by the symbol list 
$\pi'=s_1 \ldots s_{i-1}s_{i+1}s_is_{i+1}\ldots s_n$, so $G'$ is threshold.
If no such index $i$ exists, then $\pi$ is a list consisting only of $+$, so $G$ is the complete graph $K_n$; however, $K_n-e$ is also threshold.

Thus, for any graph $G$ with no induced $2K_2$, $P_4$, or $C_4$, there exists an edge $e \in G$ such that $G-e$ also has no induced $2K_2$, $P_4$, or $C_4$.
It follows that $\indsat{n}{\{2K_2,P_4,C_4\}} \neq 0$. 
\end{proof}

The family of {\it split graphs} is another family of graphs that can be characterized by a set of forbidden induced subgraphs. A split graph is a graph whose vertex set can be partitioned into a clique and an independent set. F\"oldes and Hammer~\cite{FH} showed that a graph is a split graph if and only if it contains no induced $2K_2$, $C_4$, or $C_5$.

\begin{prop}\label{split}
For all $n$, $\indsat{n}{\{2K_2,C_4,C_5\}} \neq 0$.
\end{prop}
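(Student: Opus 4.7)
The plan is to invoke the F\"oldes--Hammer characterization stated just above: a graph contains no induced $2K_2$, $C_4$, or $C_5$ if and only if it is a split graph. Thus, to prove $\indsat{n}{\{2K_2,C_4,C_5\}} \neq 0$, I will show that no $n$-vertex graph $G$ (viewed as a trigraph with no gray edges) is $\{2K_2,C_4,C_5\}$-induced-saturated. Equivalently, for every $G$, either $G$ already contains some induced member of $\mathcal{F}$ (in which case the first condition of induced-saturation fails), or there exists an edge $e \in E(G)$ with $G-e$ still $\mathcal{F}$-free, or a non-edge $e$ with $G+e$ still $\mathcal{F}$-free.

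The key observation, which carries essentially the whole argument, is the following: suppose $G$ is split with partition $V(G) = K \cup I$, where $K$ is a clique and $I$ is an independent set, and suppose we can pick $u \in K$ and $v \in I$. Then modifying the $uv$-adjacency (adding the edge $uv$ if it is absent, or deleting it if it is present) changes only one edge between $K$ and $I$. The set $K$ is therefore still a clique and $I$ is still independent, so the same partition $(K,I)$ witnesses that the modified graph is still a split graph, and in particular still contains no induced $2K_2$, $C_4$, or $C_5$.

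The remaining step is to show that such a modification is always available when $n \ge 2$. If both $K$ and $I$ are nonempty, apply the observation above. If $I = \emptyset$, then $G = K_n$, and deleting any edge $uv$ yields a split graph with partition $(V\setminus\{u\},\{u\})$. Dually, if $K = \emptyset$, then $G$ is edgeless, and adding any edge $uv$ yields a split graph with partition $(\{u,v\},V\setminus\{u,v\})$. For $n \ge 2$ one of these three cases applies, producing in every case an edge modification of $G$ that stays $\mathcal{F}$-free, contradicting any supposed induced-saturation.

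The main obstacle is effectively nil once the F\"oldes--Hammer characterization is in hand: the proof reduces to the observation that edges across the clique-part and independent-set-part of a split partition are ``free'' to toggle. Contrast this with Proposition~\ref{threshold}, where to remove an edge from a threshold graph while preserving the forbidden-subgraph structure one needed to find a carefully chosen edge (arising from the $-+$ transition in the binary representation); here no such care is needed, which is exactly why $\sis{n}{H}$ can be defined for each $H$ in $\mathcal{F}$ individually (in the sense that $\indsat{n}{H}=0$ for each such $H$ and $n$ large) while the family $\mathcal{F}$ still has positive induced-saturation number.
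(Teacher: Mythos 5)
Your proof is correct and uses exactly the same idea as the paper: toggling any edge between the clique part and the independent part of a split partition preserves splitness, so no split graph can be $\{2K_2,C_4,C_5\}$-induced-saturated. The paper states this in one sentence; your version just spells out the degenerate cases where one side of the partition is empty.
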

\begin{proof}
Since adding or deleting an edge between the clique part and the independent set of a split graph still results in a split graph, it follows that $\indsat{n}{\{2K_2, C_4, C_5\}} \neq 0$.
\end{proof}

We have shown that $\indsat{n}{2K_2}$, $\indsat{n}{C_4}$, and $\indsat{n}{C_5}$ are all equal to zero for sufficiently large $n$. Thus, this example shows that even though every graph in a family has induced-saturation number zero, the family itself may not have induced-saturation number zero. 

Other families characterized by a (not necessarily finite) family of forbidden induced subgraphs include 
perfect graphs \cite{CRST}, 
trivially perfect graphs \cite{W}, \cite{G}, 
interval graphs \cite{LB}, 
and line graphs \cite{B}.
It would be interesting to determine $\indsat{n}{\mathcal F}$ and $\sis{n}{\mathcal F}$ for these families. We suspect that doing so will be much more difficult than for threshold and split graphs, as the families of forbidden graphs are significantly more complicated.

\section{Acknowledgements}
The authors would like to thank Michael Ferrara and Douglas West for their kind support and advice. 

\end{document}